\documentclass{amsart}

\usepackage{amssymb}
\usepackage{mathrsfs}
\usepackage[colorlinks=true,linkcolor=blue,citecolor=magenta]{hyperref}

\newtheorem{thm}{Theorem}[section]

\newtheorem{lem}[thm]{Lemma}
\newtheorem*{lem*}{Lemma}

\newtheorem{prop}[thm]{Proposition}

\newtheorem{cor}[thm]{Corollary}

\newtheorem{question}[thm]{Question}

\theoremstyle{remark}
\newtheorem{remark}[thm]{Remark}

\theoremstyle{definition}
\newtheorem{defn}[thm]{Definition}

\newtheorem{example}[thm]{Example}

\newcounter{my_enumerate_counter}

\newcommand\comment[1]{}

\newcommand\Homeo{\operatorname{Homeo}}

\renewcommand{\epsilon}{\varepsilon}

\title[]
{Coherent actions by homeomorphisms on the real line or an interval}

\thanks{
The author thanks Nicol\'{a}s Matte Bon, Matt Brin, Mark Sapir, Justin Moore, Isabelle Liousse and Michele Triestino for helpful discussions and comments.
This research has been supported by a Swiss national science foundation ``Ambizione" grant.}

\author{Yash Lodha}

\keywords{Thompson's group, Groups of homeomorphisms}

\subjclass[2010]{Primary: 43A07; Secondary: 20F05}

\begin{document}

\dedicatory{This paper is dedicated to the memory of Prof. Matti Rubin (1946-2017).} 

\begin{abstract}
We study actions of groups by homeomorphisms on $\mathbf{R}$ (or an interval) that are minimal, have solvable germs at $\pm \infty$ and contain a pair of elements of a certain type.
We call such actions \emph{coherent}.
We establish that such an action is rigid, i.e. any two such actions of the same group are topologically conjugate.
We also establish that the underlying group is always non elementary amenable, but satisfies that every proper quotient is solvable.
As a first application, we demonstrate that any coherent group action 
$G<\textup{Homeo}^+(\mathbf{R})$ that produces a nonamenable equivalence relation with respect to the 
Lebesgue measure satisfies that the underlying group does not embed into Thompson's group $F$.
This includes all known examples of nonamenable groups that do not contain non abelian free subgroups and act faithfully on the real line by homeomorphisms.
As a second application, we establish that the Brown-Stein-Thompson groups $F(2,p_1,...,p_n)$ for $n\geq 1$ and $p_1,...,p_n$ distinct odd primes, do not embed into Thompson's group $F$.
This answers a question recently raised by C. Bleak, M. Brin, and J. Moore.
Our tools also allow us to prove additional non embeddability results for Brown-Stein-Thompson and Bieri-Strebel groups.
\end{abstract}

\maketitle

\section{Introduction}

We define a group action $G<\textup{Homeo}^+(\mathbf{R})$ to be \emph{coherent} if:
\begin{enumerate}
\item The action is minimal, i.e. the orbits are dense.
\item The groups of germs at $\pm\infty$ are solvable.
\item There exists an element that has a trivial germ at $-\infty$ and does not fix any point in some interval $(r,\infty)$.
\item There exists an element that has a trivial germ at $+\infty$ and does not fix any point in some interval $(-\infty, s)$.
\end{enumerate}
(A similar definition is prescribed for a group action  $G<\textup{Homeo}^+([0,1])$, in the preliminaries.
Also, groups of germs are defined in Definition \ref{GOgerms}.)

These conditions are satisfied by a rich class of group actions by homeomorphisms, many of which are discussed in subsection \ref{Examples}. 
The class of groups that admit such actions is denoted by $\mathcal{C}$.
The class contains continuum many isomorphism classes of finitely generated groups (see Lemma \ref{Continuum}), and any group that admits a faithful action on the real line by homeomorphisms embeds in some group in this class.
(A broad range of examples are described in \ref{Examples}.)
Groups in $\mathcal{C}$ have interesting algebraic and dynamical features, and we show the following:

\begin{thm}\label{structure}
Let $G\in \mathcal{C}$. Then $G$ satisfies the following:
\begin{enumerate}
\item $G$ contains a subgroup isomorphic to Thompson's group $F$.
Therefore $G$ is non elementary amenable (in particular, $G$ is non solvable).
\item There exists an $n\in \mathbf{N}$ (which depends on $G$) such that every proper quotient of $G$ is solvable of degree at most $n$.
\item There exists an $n\in \mathbf{N}$ (which depends on $G$) such that the $n$-th derived subgroup $G^{(n)}$ is simple.
\end{enumerate}
\end{thm}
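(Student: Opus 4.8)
The plan is to locate a single normal subgroup that controls all three statements. Let $\phi_{-}\colon G\to\Gfrak_{-}$ and $\phi_{+}\colon G\to\Gfrak_{+}$ be the homomorphisms to the groups of germs at $-\infty$ and $+\infty$ (Definition \ref{GOgerms}); by coherence condition (2) both $\Gfrak_{\pm}$ are solvable, say of derived lengths $d_{-},d_{+}$. Set $K=\ker\phi_{-}\cap\ker\phi_{+}$, the subgroup of elements that are the identity near both ends, i.e. the compactly supported elements of $G$. Then $K\triangleleft G$, and $(\phi_{-},\phi_{+})$ embeds $G/K$ into $\Gfrak_{-}\times\Gfrak_{+}$, so $G/K$ is solvable of some derived length $d\le\max(d_{-},d_{+})$. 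I claim all three parts follow once we establish (i) that $K$ contains a copy of Thompson's group $F$ (so $K$, hence $G$, is non-solvable), and (ii) that $K$ is simple and nonabelian with $C_{G}(K)=\{1\}$.

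For (i): conditions (3) and (4) furnish $f,g\in G$ with $f$ the identity near $-\infty$ and moving every point of some $(r,\infty)$, and $g$ the identity near $+\infty$ and moving every point of some $(-\infty,s)$. A direct germ computation shows that $c=[f,g]$ is the identity near both $\pm\infty$, so $c\in K$, and after using minimality to make the two supports overlap suitably it is nontrivial; thus $K\ne\{1\}$. First I would use minimality, and the fast dynamics of $f,g$, to choose $w\in G$ (after passing to suitable powers and conjugates of $c$) so that the supports of $c$ and $wcw^{-1}$ form a fast $2$-chain of bumps inside $K$, and then invoke the standard criterion that such a pair generates a copy of $F$. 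Since $F\le K\le G$ and $F$ is classically not elementary amenable, $G$ is non elementary amenable, and in particular non-solvable; this is part (1).

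The main obstacle is (ii), the simplicity of $K$. First I would show that $K$ acts minimally on $\mathbf{R}$: conjugating the nontrivial compactly supported element above by elements of $G$, and using both minimality and the contracting behaviour of the distinguished elements, one finds inside every nonempty open interval a nontrivial element of $K$ supported there. Granting this local richness, simplicity follows from the usual commutator/fragmentation argument: given $\{1\}\ne N\triangleleft K$ and $1\ne h\in N$, pick a small interval $I$ with $h(I)\cap I=\emptyset$ and a nontrivial $k\in K$ supported in $I$; then $[h,k]\in N$ has a prescribed local form, and conjugating such commutators around by $K$ generates all of $K$, whence $N=K$. Perfectness of $K$ is obtained the same way, so $K$ is simple and nonabelian. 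The identical disjoint-support trick gives $C_{G}(K)=\{1\}$: any $h$ commuting with $K$ must satisfy $h(\operatorname{supp}k)=\operatorname{supp}k$ for every $k\in K$, and the presence of elements supported in arbitrarily small intervals about any moved point forces $h=\id$. This paragraph is where the real work lies.

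Finally, (2) and (3) are formal consequences with $n=d$. Since $(G/K)^{(d)}=\{1\}$ we have $G^{(d)}\le K$, and as $G$ is non-solvable $G^{(d)}\ne\{1\}$. Being normal in $G$ and contained in $K$, the subgroup $G^{(d)}$ is normal in the simple group $K$, so $G^{(d)}=K$; as $K$ is perfect, $G^{(n)}=K$ is simple for every $n\ge d$, which is (3). For (2), let $\{1\}\ne N\triangleleft G$ be proper. Then $N\cap K\triangleleft K$, so simplicity gives $N\cap K\in\{\{1\},K\}$. If $N\cap K=\{1\}$ then $[N,K]\subseteq N\cap K=\{1\}$, so $N\le C_{G}(K)=\{1\}$, contradicting $N\ne\{1\}$. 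Hence $K\le N$, and $G/N$ is a quotient of the solvable group $G/K$, so it is solvable of derived length at most $d$. Thus the same $n=d$ serves in both (2) and (3).
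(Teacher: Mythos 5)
Your overall architecture matches the paper's: the paper also works with the kernel of the two germ maps (the compactly supported subgroup $G_c$), proves a Higman-type simplicity statement, bounds $G/G_c$ by the solvable germ groups so that $G^{(n)}\subseteq G_c$, and finishes part (2) with exactly your centralizer/normal-intersection trick. But there is a genuine gap at the heart of your step (ii): you claim $K=G_c$ itself is simple, and your justification --- ``conjugating such commutators around by $K$ generates all of $K$'' --- is precisely where the argument fails. The fragmentation/commutator (Higman) argument only shows that every nontrivial normal subgroup of $K$ contains $K'$; it produces commutators, never a proof that commutators generate $K$. So what you actually get is that $K$ is simple \emph{if and only if} $K$ is perfect, and perfectness of $G_c$ is not ``obtained the same way'': coherence constrains the germs at $\pm\infty$ but says nothing about abelian quotients of the compactly supported subgroup (for general Bieri--Strebel-type coherent actions $G_c$ need not be perfect). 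The paper avoids this exactly: its Lemma \ref{HigmanCor} concludes simplicity of $G_c'$, not of $G_c$, and the endgame uses $G^{(n)}\subseteq G_c$, hence $G^{(n+1)}\subseteq G_c'$, forcing $G^{(n+1)}=G_c'$. Your final paragraph survives verbatim if you replace $K$ by $G_c'$ and $d$ by $d+1$ throughout; as written, with $K=G_c$, it rests on an unproved (and in general false) simplicity claim.

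There is a second, smaller gap in your step (i). Your element $c=[f,g]$ does lie in $G_c$ (the germ computation is fine), but the ``standard criterion'' that a fast $2$-chain generates $F$ requires the two elements to be \emph{one-bump} homeomorphisms, i.e.\ with connected support forming a chain; $c$ and $wcw^{-1}$ may each have many components of support, and a fast pair of multi-bump elements generates a group governed by its dynamical diagram, typically not $F$. Nothing in your sketch extracts a one-bump element from $G_c$, and there is no reason one exists. The paper sidesteps this entirely: it builds its copy of $F$ not inside $G_c$ but from the one-sided coherence elements $f$ and a conjugate $g_1$ (trivial germ at one end each, overlapping in the middle), verifying the two defining relations of $F$ directly from the support dynamics and using that every proper quotient of $F$ is abelian. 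Note also that for the theorem you never need $F\le K$: once $F\le G$ you get $G$ nonsolvable, hence $G^{(d+1)}\neq\{1\}$, which is all your endgame uses; so following the paper's construction of $F$ from $f,g_1$ would repair this step at no cost.
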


We remark that for each $n\in \mathbf{N}$, there is a group in $\mathcal{C}$ which admits a quotient that is solvable of degree $n$.
(See Lemma \ref{solvability}.)
We demonstrate that coherent group actions are \emph{rigid}:

\begin{thm}\label{rigidity}
Consider two coherent actions $G,H<\textup{Homeo}^+(\mathbf{R})$ such that the underlying groups $G,H$ are isomorphic.
Then for each isomorphism $\nu:G\to H$ there is a homeomorphism $\phi:\mathbf{R}\to \mathbf{R}$ such that $\nu(f)=\phi^{-1} f \phi$ for each $f\in G$.
\end{thm}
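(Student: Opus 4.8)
The plan is to prove the rigidity theorem by constructing a conjugating homeomorphism out of the combinatorial/dynamical structure shared by the two actions, exploiting the fact that an isomorphism $\nu\colon G\to H$ transports all the defining features of coherence. Let me think about how to actually build $\phi$.

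The key insight in rigidity results of this flavor (Rubin-style reconstruction, and the dedication to Matti Rubin is a strong hint) is that a minimal action lets one recover the topological space of the action from purely group-theoretic data. So the first step would be to identify, for each point $x\in\mathbf{R}$, a group-theoretic invariant — typically the collection of elements supported near $x$, or the germ structure at $x$ — that $\nu$ must preserve. The standard move is: for $g\in G$, the support $\operatorname{supp}(g)$ is an open subset of $\mathbf{R}$, and one shows that the lattice of supports (ordered by inclusion, detected via commutation: $g,h$ commute when their supports are disjoint, roughly) is definable from the group. Minimality guarantees that points correspond to ultrafilters (or maximal filters) of "support germs," so one gets a canonical bijection $\phi\colon\mathbf{R}\to\mathbf{R}$ from $\nu$, and this $\phi$ is automatically equivariant by construction.

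I would organize the argument in the following steps. First, establish the local structure: show that for a coherent action the point-stabilizer filters, or equivalently the partial order on supports, is recoverable group-theoretically and invariant under isomorphism, so $\nu$ induces an order-isomorphism of supports. Second, translate this into a bijection $\phi$ of the underlying lines: a point $x$ is encoded as a maximal filter of open sets carrying nontrivial group elements, and $\nu$ sends such filters to filters, yielding $\phi(x)$. Third, verify $\phi$ is a homeomorphism — continuity in both directions follows because $\phi$ preserves the inclusion order on the support lattice, hence sends a neighborhood basis to a neighborhood basis. Fourth, check the equivariance $\nu(f)=\phi^{-1}f\phi$ directly on the level of filters/supports: since $\phi$ was defined from $\nu$ via the support correspondence and $f$ acts on supports by $S\mapsto f(S)$ while $\nu(f)$ acts by $\nu(f)(S')$, the two match after applying $\phi$. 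Fifth, use conditions (2)–(4) of coherence — solvable germs at $\pm\infty$ and the existence of elements with trivial germ at one end acting without fixed points on a ray — to pin down the \emph{orientation} and to handle the two ends $\pm\infty$, ensuring $\phi$ is an orientation-preserving homeomorphism of $\mathbf{R}$ rather than merely of some abstract order.

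The hard part will be the reconstruction step: proving that the purely algebraic data inside $G$ (and $H$) actually recovers the topology of $\mathbf{R}$ faithfully enough that maximal support-filters biject with points. Minimality is what makes this possible, but one must rule out degenerate filters and show that each point is separated by the available group elements — this is where the richness of the action (it must contain the germ-trivial, ray-moving elements from conditions (3) and (4), together with the presence of a copy of Thompson's $F$ provided by Theorem \ref{structure}) gets used to produce enough elements with small support to distinguish nearby points. A subtle secondary obstacle is the behavior at the two ends $\pm\infty$: because the germ groups there are only assumed solvable (not trivial), the support lattice near the ends is not quite the same as in the interior, so one must argue separately that $\nu$ respects the distinguished "ends" and that the induced map extends continuously and compatibly across all of $\mathbf{R}$. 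Once the bijection and its continuity are in hand, the equivariance and the homeomorphism property should follow formally from the naturality of the construction.
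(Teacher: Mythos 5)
You have correctly guessed the engine: the paper's proof of Theorem \ref{rigidity} is literally a two-line application of Rubin's reconstruction theorem (Theorem \ref{rubin}). But the paper does not reprove Rubin's theorem; it verifies its hypothesis, and that verification is the entire content of the section. The hypothesis is not minimality but \emph{local density} (equivalently, for subgroups of $\Homeo^+(\mathbf{R})$, local minimality): for every open interval $V$ and $x\in V$, the subgroup $G_V$ of elements pointwise fixing the complement of $V$ must already move $x$ with somewhere-dense closure. Your plan attributes the possibility of reconstruction to minimality (``Minimality guarantees that points correspond to ultrafilters\dots''), and this is where the genuine gap lies: minimality alone is nowhere near sufficient. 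The action of $\mathbf{Z}^2$ on $\mathbf{R}$ by the translation group $\mathbf{Z}+\sqrt{2}\,\mathbf{Z}$ is minimal, yet every nontrivial element has full support, $G_V$ is trivial for every bounded $V$, no reconstruction of points from supports is possible, and such actions are not rigid. What the paper actually does (Proposition \ref{locallydense}) is manufacture elements of small support from the coherence axioms: since the germs at $\pm\infty$ are solvable while $G$ is not (it contains $F$), iterated commutators yield nontrivial compactly supported elements; minimality plus a compactness/covering argument then gives compactly supported elements carrying any compact interval past any point (Lemma \ref{compactlysupported}) and into any target interval (Lemma \ref{IntervalTransitivity}); a further chain of transitivity lemmas (Lemmas \ref{trans}, \ref{transepsilon}, \ref{finaltrans}) permits conjugating a compactly supported element with a prescribed component of support so as to produce, inside any open $V$, an element moving a given $x$ into a given $U\subset V$. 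You flag ``produce enough elements with small support'' as the hard part but supply no mechanism; that mechanism \emph{is} the proof.

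A secondary but real defect is your sketch of the reconstruction itself. You propose to define the support lattice ``via commutation: $g,h$ commute when their supports are disjoint, roughly.'' Only one implication holds: disjoint supports force commutation, but commuting elements can have identical supports ($f$ and $f^2$, say), so commutation does not define disjointness, and Rubin's actual definability argument (via the algebra of regular open sets and local movability conditions) is considerably more delicate than your outline suggests. Since the statement being proved is exactly what Rubin's theorem delivers once local density is checked, the efficient route --- and the paper's --- is to cite it rather than rebuild it; if you insist on rebuilding it, the commutation-based step must be replaced wholesale. Finally, note that the theorem as stated asks only for a homeomorphism $\phi$, not an orientation-preserving one, so your fifth step pinning down orientation via the germ data at $\pm\infty$ is not needed for the claim.
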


We also obtain the following related statement as a direct corollary of the proofs.
Here we can replace the conditions in the definition of coherent actions with milder hypothesis.

\begin{thm}
Let $G,H<\textup{Homeo}^+(\mathbf{R})$ be group actions such that:
\begin{enumerate}
\item Both actions are minimal.
\item Both actions contain non identity elements with supports contained in compact intervals.
\item There exist $g\in G, h\in H$ such that $g,h$ have trivial germs at $-\infty$ and there is an interval $(r,\infty)$
on which both $g,h$ are increasing.
\item $G\cong H$.
\end{enumerate}
Then for each isomorphism $\nu:G\to H$ there is a homeomorphism $\phi:\mathbf{R}\to \mathbf{R}$ such that $\nu(f)=\phi^{-1} f \phi$ for each $f\in G$.
\end{thm}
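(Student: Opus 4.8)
The plan is to show that the construction carried out in the proof of Theorem~\ref{rigidity} never appeals to the full strength of coherence, but only to the three hypotheses listed, and then to run that construction. The guiding principle is \emph{reconstruction}: one recovers the line, its order, and the $G$-action from the abstract group in such a way that an abstract isomorphism $\nu$ is forced to respect all of this data, and therefore descends to a homeomorphism $\phi$.

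First I would build $\phi$ equivariantly starting from a single point. Fix $x_0\in\mathbf{R}$; by minimality the orbit $G\cdot x_0$ is dense, and any equivariant map is determined on it by $\phi(f x_0)=\nu(f)\phi(x_0)$. Well-definedness and injectivity on the orbit reduce to the identity
\[
\nu\bigl(\operatorname{Stab}_G(x_0)\bigr)=\operatorname{Stab}_H(y_0)
\]
for a suitable target point $y_0$. Thus the heart of the matter is a purely group-theoretic description of point stabilizers. Here hypothesis~(2) is exactly what is needed: the non-identity elements with support in a compact interval generate, via their commutation and conjugation relations, a lattice of ``supported subgroups'' from which the point stabilizers and their germ subgroups can be recognized algebraically. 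Because this description is invariant under abstract isomorphism, $\nu$ carries the stabilizer lattice of $G$ to that of $H$, producing a candidate bijection $\phi$ on a dense set.

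Next I would recover the order and the topology. The linear order on the reconstructed point space is read off from the inclusion order of the supported subgroups, so $\phi$ is monotone; minimality then lets me extend $\phi$ continuously and monotonically to all of $\mathbf{R}$, giving a homeomorphism onto its image, and the same argument applied to $\nu^{-1}$ shows $\phi$ is onto. The one feature the order alone cannot pin down is \emph{orientation}: a priori $\phi$ might reverse the two ends of the line. This is precisely where hypothesis~(3) enters. The distinguished elements $g\in G$ and $h\in H$ both have trivial germ at $-\infty$ and are increasing on $(r,\infty)$; these two conditions are themselves expressible through the reconstructed germ structure, and matching $\nu(g)$ against $h$ forces $\phi$ to send the $-\infty$ end to the $-\infty$ end. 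Hence $\phi$ is orientation preserving and the equivariance $\nu(f)=\phi^{-1}f\phi$ holds on all of $\mathbf{R}$ by continuity.

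The main obstacle is the algebraic recognition step: proving that ``having support in a compact interval'' and ``having trivial germ at $-\infty$'' are invariant under any abstract isomorphism $\nu$, using only the hypotheses (1)--(3). Under full coherence one has solvable germs at both ends and a moving element at each end, which make this recognition routine; with the weaker hypotheses one must instead extract, from the compactly supported elements together with the single oriented pair $g,h$, enough rigidity to characterize these notions group-theoretically. Once that recognition is in place, the equivariant extension and the orientation argument proceed exactly as in the proof of Theorem~\ref{rigidity}, and I expect no further difficulties.
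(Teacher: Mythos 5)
Your overall instinct---that the proof of Theorem \ref{rigidity} uses less than the full strength of coherence---is correct, but your execution replaces the paper's actual mechanism with a reconstruction program whose central step you leave unproved. The paper never recovers points, stabilizers, or the order on $\mathbf{R}$ from the abstract group. It verifies that each of the two actions is locally minimal (equivalently, locally dense) and then invokes Rubin's theorem (Theorem \ref{rubin}) as a black box: hypothesis (2) hands you a nontrivial compactly supported element, which was the only place coherence entered the proof of Lemma \ref{compactlysupported}; minimality then yields Lemma \ref{IntervalTransitivity}; and the element of hypothesis (3) (trivial germ at $-\infty$, increasing on $(r,\infty)$) drives Lemmas \ref{trans}, \ref{transepsilon}, \ref{finaltrans} and hence Proposition \ref{locallydense}. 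Rubin's theorem then produces $\phi$ directly, with no need to show that any dynamical notion is invariant under the abstract isomorphism $\nu$.

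The gap in your version is exactly the step you yourself flag as ``the main obstacle'': you assert that point stabilizers, compact support, and trivial germ at $-\infty$ can be recognized algebraically from a ``lattice of supported subgroups,'' but you give no construction and no argument, and hypotheses (1)--(3) are far too weak for this to be routine. Establishing $\nu\bigl(\operatorname{Stab}_G(x_0)\bigr)=\operatorname{Stab}_H(y_0)$ for matching points is essentially the content of Rubin's reconstruction theorem itself (proved via the Boolean algebra of regular open sets and local subgroups $G_V$), so your plan amounts to reproving Rubin from scratch while deferring its hardest part; nothing in your sketch fills that hole. You also misidentify the role of hypothesis (3): it is not an orientation-fixing device (the conclusion asks only for a homeomorphism, and Rubin's theorem does not promise orientation preservation); rather, it is the engine of the interval-moving lemmas that establish local minimality, without which Rubin's theorem cannot be applied at all. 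As written, the proposal is a plan around an unproven recognition claim, not a proof.
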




Groups in the family $\mathcal{C}$ are interesting objects of study for their own sake, however in this article we are also driven by the following applications.
We shall prove two non-embeddability results for Thompson's group $F$.
Along the way, we develop a systematic set of tools that can be employed for proving non-embeddability results for various groups in the Thompson family. 

Thompson's group $F$ was introduced by R. Thompson more than $50$ years ago, and the group is an interesting and important infinite group that has 
been postulated as a counterexample for various conjectures in group theory (see \cite{CFP} for instance.)
The subgroup structure of $F$ is quite mysterious and several recent papers (for instance \cite{BBKMZ}, \cite{BBM}, \cite{GS} and \cite{Golan}) have been 
devoted to developing a systematic structure theory of subgroups of $F$.
One prominent result is  due to Vaughan Jones (see \cite{Jones}), who showed that the the group $F$ encodes all knots in a natural manner, and a specific subgroup  of $F$ encodes all oriented knots.

The question concerning the amenability of Thompson's group $F$ is well known, and the group is lurking around the boundary between amenable and nonamenable groups.
In \cite{Monod}, Monod employed a remarkable strategy to establish non amenability of various groups of homeomorphisms of the real line.
The strategy involves establishing non amenability of the group by demonstrating the non $\mu$-amenability of the associated orbit equivalence relation (with respect to the Lebesgue measure $\mu$).
In this light, it is natural to inquire the following:\\

\begin{question}\label{Q1}
Let $G<\textup{Homeo}^+(\mathbf{R})$ be a group action such that the associated orbit equivalence relation is non $\mu$-amenable.
Does the underlying group $G$ embed in Thompson's group $F$?
\end{question}

We obtain a negative result to the above question for the groups in class $\mathcal{C}$ whose coherent actions produce non $\mu$-amenable equivalence relations.

\begin{thm}\label{main2}
Let $G<\textup{Homeo}^+(\mathbf{R})$ be a coherent group action which produces a non $\mu$-amenable equivalence relation (with respect to the Lebesgue measure).
Then the underlying group $G$ does not embed in Thompson's group $F$.
\end{thm}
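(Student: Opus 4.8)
The plan is to combine a measure-free amenability invariant with the rigidity theorem, so that the merely topological conjugacy produced by Theorem \ref{rigidity} already suffices. The key input is that the standard action of Thompson's group $F$ on $[0,1]$ by dyadic piecewise linear homeomorphisms has a \emph{hyperfinite} orbit equivalence relation. To see this, identify $[0,1]$ (off the countable set of dyadic rationals) with the space $\{0,1\}^{\mathbf{N}}$ of binary expansions, under which Lebesgue measure corresponds to the uniform Bernoulli measure. Each $f\in F$ is affine with power-of-two slopes on finitely many dyadic linearity intervals, so on binary expansions it replaces a fixed finite prefix by another finite prefix and applies a fixed finite shift; hence $x$ and $f(x)$ are tail-equivalent for the one-sided shift. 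The $F$-orbit relation is therefore a Borel subrelation of the (hyperfinite) tail relation, and since Borel subrelations of hyperfinite relations are hyperfinite, the $F$-orbit relation is hyperfinite. A fortiori the orbit relation of any subgroup of $F$, acting by the restricted action, is hyperfinite.

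Now suppose toward a contradiction that $G$ embeds in $F$ via $\iota\colon G\hookrightarrow F$. Restricting the standard action to $\iota(G)$ gives an action of $G$ on $(0,1)\cong\mathbf{R}$ with hyperfinite orbit relation, and I would massage it into a coherent action so that rigidity applies. Since $F$, hence $\iota(G)$, acts faithfully, the complement of the global fixed-point set is nonempty; restrict to a connected component $(a,b)$ on which $G$ has no global fixed point. Having no common fixed point, the minimal set $M\subseteq(a,b)$ is either discrete or a Cantor set accumulating at both endpoints; collapsing the countably many complementary gaps yields a \emph{minimal} action $\tau$ of (a quotient of) $G$ on $\mathbf{R}$. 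Its germs at $\pm\infty$ are germs of a subgroup of $F$ at two points of $[0,1]$, and every germ group of $F$ at a point is abelian (being read off from one-sided dyadic slopes), so these germs are solvable; the germ-and-no-fixed-point elements required by conditions (3) and (4) are to be supplied from the copy of $F$ provided inside $G$ by Theorem \ref{structure}. Collapsing wandering gaps only restricts and Borel-reshuffles the orbit relation, so $\tau$ still has a hyperfinite orbit relation.

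With $\tau$ coherent, Theorem \ref{rigidity} gives a homeomorphism $\phi\colon\mathbf{R}\to\mathbf{R}$ conjugating $\tau$ to the given coherent action $G<\Homeo^+(\mathbf{R})$. A topological conjugacy is a Borel isomorphism carrying one orbit relation onto the other, and hyperfiniteness is a Borel-isomorphism invariant that makes no reference to any measure; hence the orbit relation of the given action is hyperfinite as well. By the Connes--Feldman--Weiss theorem a hyperfinite Borel equivalence relation is $\mu$-amenable with respect to every quasi-invariant measure, in particular with respect to Lebesgue measure. This contradicts the hypothesis that the coherent action produces a non-$\mu$-amenable relation, and the theorem follows.

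I expect the genuine difficulty to lie entirely in the second paragraph: the reduction to a \emph{faithful, minimal, coherent} companion action. Controlling the global fixed-point set is routine, but the kernel of the induced action on $(a,b)$ need not be trivial, and the minimalized $\tau$ must be an action of $G$ itself for rigidity to apply. The natural fix is to run the argument through the simple derived subgroup $G^{(n)}$ of Theorem \ref{structure}: simplicity forces any nontrivial induced action to be faithful, and one checks that $G^{(n)}$ shares the orbits of $G$ on a conull set so that its orbit relation inherits the non-$\mu$-amenability, while its germ data remain solvable. Verifying conditions (3) and (4) for the collapsed action (or, alternatively, checking the milder hypotheses of the unnumbered theorem following Theorem \ref{rigidity}) is the remaining technical point. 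By contrast, the measure-theoretic half is soft precisely because hyperfiniteness is measure-free, which is what lets a purely topological conjugacy transport amenability back to the Lebesgue setting.
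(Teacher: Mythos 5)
Your outer architecture does match the paper's: restrict/collapse the embedded action to a minimal companion action, apply Theorem \ref{rigidity}, transport hyperfiniteness (a measure-free Borel invariant) through the merely topological conjugacy, and contradict non-$\mu$-amenability using the fact that hyperfinite relations are $\mu$-amenable for every Borel measure. Your first and third paragraphs are sound, and your tail-equivalence proof that the $F$-relation is hyperfinite is a legitimate substitute for the paper's citation of the $\mathrm{PSL}_2(\mathbf{Z})$ folklore. But the entire mathematical content of the theorem lives in the step you defer, and both patches you sketch for it fail. On faithfulness: routing through the simple derived subgroup $G^{(n)}$ is unworkable, because the proof of Theorem \ref{structure} parts (2),(3) shows that for any coherent action one has $G^{(n)}\subseteq G_c$, i.e.\ every element of $G^{(n)}$ is compactly supported; hence no action of $G^{(n)}$ obtained by restricting a coherent action can contain the elements demanded by conditions (3),(4) of coherence (nor by condition (3) of the milder corollary), so Theorem \ref{rigidity} simply does not apply to $G^{(n)}$-actions. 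Your auxiliary claim that $G^{(n)}$ ``shares the orbits of $G$ on a conull set'' --- which you need because a subrelation of a non-amenable relation can perfectly well be amenable --- is also asserted without proof. The paper avoids the kernel problem altogether: since every proper quotient of $G$ is solvable of bounded degree while $G$ is nonsolvable, $G$ cannot embed in the direct product of the quotients arising from its orbitals, so the restriction to some closed interval $I$ with no proper invariant closed subinterval is already faithful for $G$ itself.

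Second, and more fundamentally, supplying conditions (3) and (4) for the collapsed action ``from the copy of $F$ provided inside $G$ by Theorem \ref{structure}'' cannot work: that copy is abstract, and an abstract subgroup carries no information about the germs or fixed sets of its elements in the embedded action. This is precisely what Section 5 of the paper exists to handle: one must exploit the isomorphism $\phi$ with the coherent model together with combinatorial finiteness of the embedded action (whence also the paper's preliminary reduction to finitely generated groups, which you omit --- Theorem \ref{main2} does not assume finite generation, and $\mu$-amenability passes through increasing unions to justify the reduction). Concretely, Lemma \ref{elements} takes a type-B element $g$ and elements $g_1,\dots,g_n$ of the coherent model whose germs at $-\infty$ generate abelian rank $n$, and, since the germ group of the embedded action at $inf(I)$ has rank at most $n$ (rank one for $F$), extracts a nontrivial word $\phi(g)^{a_0}\phi(g_1)^{a_1}\cdots\phi(g_n)^{a_n}$ with trivial germ at $inf(I)$; Lemmas \ref{fullysupported}--\ref{germinfinity} (fully supported elements, the type A/B/C classification, and the commutation obstructions) then force this word to have nontrivial germ at the other end, which is condition (3). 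The same machinery, via Proposition \ref{exceptionalcombfinite}, disposes of your untreated discrete-minimal-set case: the type-B element forces every orbit to accumulate at a point, so Lemma \ref{minimalinvariant} leaves only minimality or a unique exceptional minimal set, whereas collapsing the gaps of a discrete invariant set would not yield a minimal action at all. Without Lemma \ref{elements} or a genuine substitute for it, your argument does not close.
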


The second result concerns a question recently raised by C. Bleak, M. Brin, and J. Moore that asks whether Brown-Stein-Thompson groups embed in $F$.
We show that this is never the case, and in fact we obtain the following more general statement.

\begin{thm}\label{main1}
Let $G<\textup{Homeo}^+(\mathbf{R})$ be a coherent action of a finitely generated group for which there exists an $x\in \mathbf{R}\cup \{\pm \infty\}$ so that the group(s) of germs at $x$ (of the elements that fix $x$)
is either non abelian, or abelian of rank greater than $1$.
Then the underlying group $G$ does not embed into Thompson's group $F$.
\end{thm}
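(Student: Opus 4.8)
The plan is to argue by contradiction: assuming an embedding $\iota\colon G\hookrightarrow F$, I will show that the germ group of the coherent action at $x$ must reappear as a germ group of the action of $G\le F$ on the interval on which $F$ acts, and then exploit the rigid slope structure of $F$ to see that no germ group there can be non-abelian or have a one-sided factor of rank greater than $1$. The first, purely computational, step takes place inside $F$. Every element of $F$ is piecewise linear with slopes in $\langle 2\rangle$, so at any point $p$ and on either side the one-sided germ homomorphism takes values in the cyclic group $\langle 2\rangle\cong\mathbf{Z}$; consequently, for every subgroup $H\le F$, each one-sided germ group of $H$ is cyclic and each two-sided germ group of $H$ is abelian, injecting into $\mathbf{Z}\times\mathbf{Z}$ via the pair of one-sided germs. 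In particular no subgroup of $F$ admits a non-abelian germ group, nor a one-sided germ group of rank greater than $1$, and the germ group of $F$ at an endpoint is exactly $\langle 2\rangle\cong\mathbf{Z}$.

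The central step is to transfer the germ data of the \emph{coherent} action to the $F$-action of $G$. This is where the reconstruction principle underlying Theorem \ref{rigidity} enters: the point-stabilizer $G_x$ and the local kernel $G_x^0$ of elements with trivial germ at $x$ are recoverable from the abstract group together with its action on the Boolean algebra of regular open sets, so the germ group $G_x/G_x^0$ is an invariant independent of the coherent action chosen to compute it. Using that $G$ is finitely generated, I would select finitely many $a_1,\dots,a_k\in G_x$ witnessing the hypothesis — either a pair whose germs at $x$ do not commute, or a family whose germs are independent — and record the group-theoretic trace of this witness, namely which nontrivial words have fixed-point sets failing to contain a neighbourhood of the reconstructed point $x$. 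Carrying this trace across $\iota$ locates inside the $F$-action a point $p$ at which $\iota(a_1),\dots,\iota(a_k)$ exhibit the same germ behaviour, contradicting the first paragraph: a non-commuting witness is excluded because all germ groups of $F$ are abelian, while an independent witness is excluded on any side where $F$ forces cyclic germs. The cleanest instance is $x=\pm\infty$, since the ends are preserved by any conjugating homeomorphism and, by Theorem \ref{structure} together with the solvability of the germs at the ends, the end germ group is one-sided; there the comparison with $\langle 2\rangle$ is immediate, and it is this instance that handles the Brown--Stein--Thompson groups, whose end germ group is free abelian of rank $n+1\ge 2$.

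The main obstacle is that the action of $G$ inherited from the embedding into $F$ need not be minimal, hence need not be coherent, so Theorem \ref{rigidity} does not apply verbatim. Overcoming this is the technical heart of the argument: from the $F$-action I would pass to a minimal invariant subset (an interval or a Cantor set) and to the associated semiconjugate minimal model of $G$, and then verify that this passage neither destroys the reconstructed point $x$ nor collapses its germ group — concretely, that the local kernel $G_x^0$ and the independence of the chosen germs survive the semiconjugacy. Once the minimal model is seen to be coherent, Theorem \ref{rigidity} identifies it, up to a conjugating homeomorphism, with the original coherent action, so the germ group at $x$ really is the one computed inside $F$. The most delicate case is a two-sided germ group that is free abelian of rank $2$ realized bilaterally, since this can match a breakpoint germ of $F$; here one must use finite generation and the structural constraints of Theorem \ref{structure} to force the comparison point away from the countably many breakpoints, or to propagate the richness of the interior germ to an end, where the one-sided computation closes the argument.
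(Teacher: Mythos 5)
Your skeleton matches the paper's proof — contradiction via an assumed embedding $G<F<\textup{PL}^+([0,1])$, the computation that germ groups of subgroups of $F$ are small, passage to a minimal model, and Theorem \ref{rigidity} to compare it with the hypothesized coherent action — but the proposal has a genuine gap exactly at the step you dispatch with ``Once the minimal model is seen to be coherent.'' Coherence of the minimal model is the technical heart of the paper's argument, and minimality plus abelian germs do not give it: conditions (3) and (4) demand an element of the subgroup of $F$ with trivial germ at one end of the faithful interval $I$ and no fixed points near the other end, and nothing in your outline produces such an element. The paper devotes Section 5 to this (the machinery of combinatorially finite and weakly coherent actions): in the coherent model one picks $g,g_1,\dots,g_n$ whose germs at one end are independent together with a type-B witness, uses that the one-sided germ group of the $F$-realization at $\inf(I)$ has abelian rank at most $1$ to find a nontrivial word $h_{a_0,\dots,a_n}$ whose image has trivial germ at $\inf(I)$, and then invokes Lemma \ref{germinfinity} — itself resting on the commutation arguments of Lemmas \ref{fullysupported}, \ref{fullysupportedimage} and \ref{lemtype} with fully supported and compactly supported elements — to force a nontrivial germ at the opposite end. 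Your substitute transfer mechanism, ``recording the group-theoretic trace'' of witnesses in $G_x$ and carrying it across $\iota$, is circular: germ behaviour at a point is not an abstract group invariant until a conjugating homeomorphism exists, and Rubin's theorem cannot supply one until local minimality (equivalently, coherence) of the $F$-side action has been established by bare hands. A second omission: before passing to any minimal invariant set you must first locate a closed interval $I$ with no proper invariant subinterval on which $G$ acts \emph{faithfully}; the paper gets this from Theorem \ref{structure}, since every proper quotient of $G$ is solvable of bounded degree while $G$ contains $F$, so $G$ cannot embed in a product of its proper quotients — and faithfulness must be re-verified on the exceptional minimal set (Proposition \ref{exceptionalcombfinite}). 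Your proposal never addresses faithfulness at either stage.

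Your ``most delicate case'' is, by contrast, a phantom created by misreading the hypothesis. Definition \ref{GOgerms} defines the groups of germs one-sidedly, as $G_x/G_x^+$ and $G_x/G_x^-$, and Theorem \ref{main1} asserts that one of \emph{these} is non-abelian or of abelian rank greater than $1$. Since an element of $F$ fixing $x$ with one-sided slope $1$ is the identity on a one-sided neighbourhood (finitely many breakpoints), every one-sided germ group of a subgroup of $F$ embeds in $\langle 2\rangle\cong\mathbf{Z}$; so no bilateral rank-$2$ matching at a breakpoint can arise, and no argument forcing the comparison point away from the dyadics is needed. The verification that genuinely requires a remark — and which the paper makes — is in the opposite direction from the one you emphasize: one must check that the collapsing semiconjugacy onto the minimal model keeps all germ groups of abelian rank at most one (they are quotients of the corresponding one-sided germ groups in $F$), so that after the conjugacy furnished by Theorem \ref{rigidity} the cyclic germs of the minimal model contradict the hypothesized large germ group of the coherent action.
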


Since the standard actions of Brown-Stein-Thompson groups are coherent actions that satisfy the above hypothesis on groups of germs, we obtain the following.
 
\begin{cor}\label{maincor1}
Let $G=F(2,p_1,...,p_n)$ for $n\geq 1$ and $p_1,...,p_n$ distinct odd primes be a Brown-Stein-Thompson subgroup of $\textup{PL}^+([0,1])$.
Then $G$ does not embed into Thompson's group $F$.
The Bieri-Strebel group $G(I; A, P)$ does not embed into $F$ provided $P<\mathbf{R}_+^*$ has abelian rank greater than one.
\end{cor}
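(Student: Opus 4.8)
The plan is to derive the corollary from Theorem~\ref{main1} by checking, for each family, that the standard action is a \emph{coherent} action of a finitely generated group and that at one endpoint the group of germs is abelian of rank greater than one. After identifying the interior of the defining interval with $\mathbf{R}$ (so that the two endpoints become $-\infty$ and $+\infty$), the verification splits into (a) coherence, (b) finite generation, and (c) the germ rank; here I use that the germ groups are topological conjugacy invariants, so they may be computed in the piecewise linear model.

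I would first treat $G=F(2,p_1,\dots,p_n)$, which acts faithfully by piecewise linear homeomorphisms of $[0,1]$ with slopes in $P=\langle 2,p_1,\dots,p_n\rangle\le\mathbf{R}_+^*$, a free abelian group of rank $n+1$. Minimality holds because every interior orbit is dense. For a piecewise linear homeomorphism two elements of $G$ share a germ at an endpoint exactly when they share a slope there, so the germ group at each endpoint is isomorphic to a subgroup of $P$, in particular abelian and hence solvable; this gives condition (2). Conditions (3)--(4) are supplied by standard bump elements: an element equal to the identity near one endpoint and lying strictly off the diagonal throughout an interval abutting the other endpoint has a trivial germ at the first end and no fixed point near the second. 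Finite generation of $G$ is a theorem of Brown and Stein. Finally, every element of $P$ is realized as the slope at $0$ of some element of $G$, so the germ group at the left endpoint is isomorphic to $P\cong\mathbf{Z}^{n+1}$; for $n\ge 1$ this is abelian of rank $n+1\ge 2$, and Theorem~\ref{main1} yields the first assertion.

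For the Bieri-Strebel group $G(I;A,P)$ the verification of coherence is identical: minimality, abelian (hence solvable) germs at the endpoints, and the off-diagonal bump elements of conditions (3)--(4). The delicate point is finite generation, which may fail for $G(I;A,P)$ in general, whereas Theorem~\ref{main1} assumes it. I would sidestep this by passing to a subgroup. Since $P$ has abelian rank greater than one, choose multiplicatively independent $\lambda_1,\lambda_2\in P$ and set $P'=\langle\lambda_1,\lambda_2\rangle$. Using the Bieri-Strebel finite generation criterion, pick a finitely generated $\mathbf{Z}[P']$-submodule $A'\le A$ large enough that $H:=G(I;A',P')\le G(I;A,P)$ is finitely generated, acts coherently, and realizes both $\lambda_1$ and $\lambda_2$ as endpoint slopes; then the germ group of $H$ at that endpoint contains $P'$ and has rank at least two. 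Theorem~\ref{main1} shows $H$ does not embed into $F$, and since every subgroup of an $F$-embeddable group is $F$-embeddable, neither does $G(I;A,P)$.

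The steps that require genuine work, and which I expect to be the main obstacle, are the precise identification of the germ groups at the endpoints --- verifying that the required slopes are actually realized there --- and, in the Bieri-Strebel case, the construction of the finitely generated coherent subgroup $H$ whose endpoint germ rank is at least two. Minimality, solvability of the germs, and the existence of the bump elements of conditions (3)--(4) are routine for these piecewise linear groups.
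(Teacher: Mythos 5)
Your proposal is correct and takes essentially the same route as the paper: the paper's entire proof of this corollary is the single line ``This is an immediate consequence of Theorem \ref{main1}'', with the verification that the standard actions are coherent and have endpoint germ groups of rank $n+1>1$ asserted as standard in the sentence preceding the corollary. Your extra care in the Bieri--Strebel case --- passing to a finitely generated subgroup $G(I';A',P')$ with $P'$ of rank two, since Theorem \ref{main1} requires finite generation which $G(I;A,P)$ need not satisfy --- fills in a step the paper leaves entirely implicit, and is a worthwhile addition rather than a deviation.
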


In fact we are also able to conclude the following more general statement.

\begin{cor}\label{maincor2}
Let $G=F(p_1,...,p_n)$ and $H=F(q_1,...,q_m)$ be  Brown-Stein-Thompson groups where $(p_1,...,p_n)$ and $(q_1,...,q_m)$ are tuples of distinct primes and $m,n\geq 2$.
If $m>n$, then $H$ does not embed in $G$.
The Bieri-Strebel group $G(I; A_1, P_1)$ does not embed into the Bieri-Strebel group $G(I; A_2, P_2)$ if the abelian rank of $P_1$ is greater than that of $P_2$.
\end{cor}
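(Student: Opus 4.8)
The plan is to isolate the abelian rank of the slope group as the governing invariant and to show it cannot increase under an embedding of coherent actions. In the standard action the one-sided germ group at each end of $F(q_1,\dots,q_m)$ is $\langle q_1,\dots,q_m\rangle\cong\mathbf{Z}^m$, and likewise $F(p_1,\dots,p_n)$ has germ rank $n$; the same remark identifies the germ rank of $G(I;A,P)$ with the abelian rank of $P$. Both assertions of the corollary thus reduce to a single statement: \emph{if a coherent action embeds, as an abstract group, into another coherent action, then its germ rank does not increase}. This is the relative form of Theorem~\ref{main1} (the case of target germ rank $1$, $G=F$), and I would prove it by transplanting the intrinsic action across the embedding and appealing to rigidity.

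Fix an embedding $\iota\colon H\hookrightarrow G$, and let $\alpha$ be the action of $H$ on $\mathbf{R}$ obtained by restricting the action of $G$ to $\iota(H)$, with $\beta$ the intrinsic coherent action of $H$. Two features of $\alpha$ are immediate: its one-sided germ groups, being subgroups of those of $G$, have rank at most $n$, so the germ rank of $\alpha$ is at most $n$; and $\alpha$ has nontrivial compactly supported elements, because for these groups $\iota([H,H])=[\iota(H),\iota(H)]\le[G,G]$, whose elements are compactly supported in the standard action of $G$. The germ rank of $\beta$, by contrast, is $m$. Hence it suffices to show that $\alpha$ and $\beta$ are topologically conjugate: their germ ranks then agree, forcing $m\le n$ and contradicting $m>n$.

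To produce the conjugacy I would apply the milder rigidity statement following Theorem~\ref{rigidity}, which requires only minimality, compactly supported elements, and a single mixing element with trivial germ at one end. The mechanism behind rigidity is the germ calculus underlying Theorem~\ref{main1}: since elements of $G$ are piecewise-$P$, any element of $\iota(H)$ with trivial one-sided germ at an end is the identity on a neighborhood of that end, so the nested commutation pattern of the deep conjugates $a_i^{-k}c\,a_i^{k}$ --- where the $a_i$ realize the germ generators $q_i$ and $c$ is supported near the end --- recovers the germ data, and with it the conjugacy, from the group structure alone.

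The hard part will be verifying the hypotheses of that rigidity statement for the transplanted action $\alpha$, above all its minimality: the image $\iota(H)$ need not act minimally on $\mathbf{R}$, and its germs at $\pm\infty$ bear no a~priori relation to the intrinsic germs of $H$. I expect to circumvent this by passing to a minimal set of $\iota(H)$ and collapsing the complementary wandering intervals, producing a genuinely minimal action $\bar\alpha$ of $H$; its one-sided germs at every point are inherited from one-sided germs of $G$, so its germ rank remains at most $n$. Securing for $\bar\alpha$ the mixing element required by the rigidity statement --- an element of $H$ with trivial germ at one end that moves points near the other --- is the crux, and here the coherence conditions (3)--(4) on the target $G$, which supply elements moving points near each end, together with the minimality just arranged, are what I would use. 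Identifying $\bar\alpha$ with $\beta$ then gives $m\le n$, and the Bieri--Strebel assertion follows verbatim with $\operatorname{rank}P$ in place of the number of primes.
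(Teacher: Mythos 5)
Your outline reproduces the paper's architecture (restrict the embedded action to a suitable interval, minimalize by collapsing gaps, certify coherence of the resulting action, invoke rigidity, and compare abelian germ ranks at the ends), but it has a genuine gap at exactly the point you yourself flag as the crux, and the route you propose there fails. You want an element of $\iota(H)$ with trivial germ at one end of the collapsed interval that moves all points near the other end, and you propose to get it from ``the coherence conditions (3)--(4) on the target $G$.'' Those conditions supply such elements \emph{in $G$}, not in the subgroup $\iota(H)$, and neither minimality of $\bar\alpha$ nor conjugation can move them into $\iota(H)$. Producing this element inside the embedded copy is the entire content of the paper's Lemma \ref{elements}, which is the technical heart of Section 5 and is where (as the paper's two-line proof of the corollary stresses) the full strength of \emph{weak coherence} enters: since the intrinsic action of $H=F(q_1,\dots,q_m)$ has germ rank $m>n$ at the ends, one chooses in the intrinsic action an element $g$ with trivial germ at one end together with $g_1,\dots,g_n$ whose germs there generate a rank-$n$ abelian group; the $n+1$ images under the isomorphism have germs lying in an abelian group of rank at most $n$, so a pigeonhole produces a nonzero word $g^{a_0}g_1^{a_1}\cdots g_n^{a_n}$ whose image has trivial germ at that end; Lemma \ref{germinfinity} (itself resting on Lemmas \ref{fullysupported}, \ref{fullysupportedimage} and \ref{lemtype}, via commutation obstructions between type A/B/C elements, fully supported elements and compactly supported conjugates) shows the image cannot be compactly supported; and combinatorial finiteness (finitely many components of support, available because the ambient action is piecewise linear) upgrades ``nontrivial germ at the other end'' to ``no fixed point near that end.'' None of this germ-transfer machinery appears in your proposal; your remark about the deep conjugates $a_i^{-k}c\,a_i^{k}$ gestures at germ calculus but is not an argument, and without Lemma \ref{elements} the rigidity corollary you invoke has no verified hypotheses for $\bar\alpha$.

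A second, smaller gap: you assert you can pass ``to a minimal set of $\iota(H)$ and collapse, producing a genuinely minimal action $\bar\alpha$ of $H$,'' but faithfulness is not automatic at either stage. One must first select an orbital on which $\iota(H)$ acts faithfully --- the paper does this via Theorem \ref{structure}(2): every proper quotient is solvable of bounded degree, so the group does not embed in the direct product of its proper quotients --- and one must then show the restriction to the exceptional minimal set $\Gamma$ is faithful, which is the ``moreover'' clause of Proposition \ref{exceptionalcombfinite}; its proof builds a copy of Thompson's group $F$ from the elements furnished by Lemma \ref{elements} and shows it acts faithfully on $\Gamma$, so this step again presupposes the crux element you have not constructed. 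If the collapsed action were an action of a proper quotient of $H$, the identification of $\bar\alpha$ with $\beta$ via rigidity, and hence your rank comparison $m\le n$, would collapse with it.
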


We also show the following general obstruction to embeddability into $F$.

\begin{thm}\label{main3}
Let $G<\textup{Homeo}^+(\mathbf{R})$ be a coherent action of a finitely generated group such that there exists an element $g\in G$ with infinitely many components of support.
Then the underlying group $G$ does not embed into Thompson's group $F$.
\end{thm}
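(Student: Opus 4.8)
The plan is to argue by contradiction, assuming an injective homomorphism $\nu\colon G\to F$, and to derive a contradiction from the fact that, near any point, elements of $F$ are linear with slope a power of $2$, so that every nontrivial one-sided germ in $F$ is fixed-point free on a one-sided neighborhood. The element $g$ with infinitely many components of support will, by contrast, carry a germ with fixed points accumulating at a point, and the core of the argument is to transport this germ faithfully into $F$ so that the two facts collide.

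First I would locate the accumulation. Let $\{(a_n,b_n)\}$ enumerate the components of the support of $g$; since these are infinitely many pairwise disjoint intervals in $\mathbf{R}$, they accumulate at some $p\in\mathbf{R}\cup\{\pm\infty\}$. Because $g$ fixes each endpoint $b_n$ and $b_n\to p$, the point $p$ is fixed by $g$, and on every one-sided neighborhood of $p$ the map $g$ is nontrivial (it is supported on infinitely many $(a_n,b_n)$) while also fixing the points $b_n$ accumulating at $p$. Thus the relevant one-sided germ $\bar g$ of $g$ at $p$ is nontrivial yet has fixed points accumulating at $p$; it is not fixed-point free near $p$. If $p=\pm\infty$ this is a germ at an end, which by coherence (condition (2)) lies in a solvable germ group; if $p$ is finite it is an interior one-sided germ. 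In either case the feature I exploit is the same: $\bar g\neq\mathrm{id}$ but $\bar g$ is not fixed-point free near $p$.

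Next I would transport this germ into $F$, reusing the machinery behind Theorem~\ref{main1}. By rigidity (Theorem~\ref{rigidity}) the coherent action of $G$ is unique up to topological conjugacy, so it is canonically attached to the abstract group. Given $\nu\colon G\hookrightarrow F$, one restricts the standard piecewise-linear action of $F$ to $\nu(G)$ and, passing to a minimal set and collapsing wandering intervals, produces a minimal action of $G$ that, by the uniqueness and semiconjugacy analysis used for Theorem~\ref{main1}, compares with the coherent action. Following the stabilizer of $p$ through this comparison yields a point $q$ of the $F$-picture together with a map of one-sided germ groups $\Gamma_p^-(G)\to\Gamma_q^\pm(F)$ compatible with the local dynamics, and in particular injective enough to detect accumulating fixed points. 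Since in $F$ every element is linear of slope a power of $2$ near $q$, the group $\Gamma_q^\pm(F)$ is infinite cyclic and each of its nontrivial elements is fixed-point free on a one-sided neighborhood of $q$. Hence the image of $\bar g$ is either trivial or fixed-point free near $q$: the first is impossible because the comparison is faithful on the germ of $g$ (minimality and faithfulness prevent $g$ from becoming locally trivial), and the second contradicts the accumulation of fixed points of $\bar g$. This contradiction shows that no such $\nu$ exists.

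The main obstacle is the transport step: one must show that an abstract embedding $G\hookrightarrow F$ forces the local germ dynamics of the (unique) coherent action at $p$ to be reflected, injectively on the relevant germ, by the genuine local dynamics of $F$ at some point $q$. Establishing this compatibility --- rather than a mere abstract injection of germ groups, which would not see accumulating fixed points --- is exactly the content carried over from the proof of Theorem~\ref{main1}, now arranged to preserve the qualitative ``not fixed-point free'' feature of $\bar g$ instead of the rank of a germ group. A secondary point is the uniform treatment of the interior and the two end cases, where coherence condition (2) guarantees that the end germ groups are solvable and so compare with the cyclic end germ groups of $F$.
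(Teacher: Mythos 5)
Your overall skeleton coincides with the paper's: assume $\nu\colon G\hookrightarrow F$, use the fact that every proper quotient of $G$ is solvable of bounded degree to restrict the $F$-action faithfully to a closed interval $I$ with no proper invariant closed subinterval, observe that this restricted action is combinatorially finite and weakly coherent, collapse via Corollary \ref{maincor} to a coherent action $H$, and compare with the given coherent action via Theorem \ref{rigidity}. The difference is the endgame, and that is where your argument has a genuine gap. You finish by positing a map of one-sided germ groups $\Gamma_p^-(G)\to\Gamma_q^\pm(F)$ that is ``compatible with the local dynamics'' and ``injective enough to detect accumulating fixed points.'' No such map exists in the direction you need: the comparison with $F$ passes through the monotone semiconjugacy $\Phi$ of Corollary \ref{maincor}, and $\Phi$ induces germ maps from the $F$-side onto the collapsed side $H$, not into $F$. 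An element of $\nu(G)$ may act nontrivially inside a collapsed fiber $\Phi^{-1}(x')$ over a fixed point, so the germ of the collapsed homeomorphism at $q$ neither determines nor injects into the germ of its $F$-representative; ``following the stabilizer of $p$'' does not produce a germ homomorphism into the cyclic germ groups of $F$. You attribute this transport to the proof of Theorem \ref{main1}, but that proof performs no such transport: it uses only that a topological conjugacy preserves an invariant of the action (there, that the groups of germs at points are $\mathbf{Z}$).

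What does work --- and is the paper's actual one-line finish --- bypasses germs entirely: every element of $F$ has finitely many components of support (piecewise linearity with finitely many breakpoints); this property passes to the collapsed action $H$, since fibers of $\Phi$ over fixed points of $\psi(f)$ are $f$-invariant intervals with fixed endpoints, so collapsing can merge or destroy components of support but never create them; and it is preserved by the topological conjugacy furnished by Theorem \ref{rigidity}. The element $g$ with infinitely many components of support then yields an immediate contradiction. Your accumulation-point observation is correct but unnecessary; if you wish to keep the local formulation, the purported germ map must be replaced by exactly this pullback argument: accumulating fixed points of $h\in H$ at $q$, with nontrivial action in between, pull back through $\Phi$ to infinitely many pairwise disjoint components of support of $\nu(g)\in F$, which is absurd. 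As written, however, the key transport step of your proof is not established, and the object it invokes does not exist.
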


As another corollary to Theorem \ref{main1} we immediately obtain:

\begin{cor}\label{maincor3}
Let $G$ be a subgroup of Monod's group $H(\mathbf{R})$ acting coherently and so that there is a point $x\in \mathbf{R}\cup \{\pm \infty\}$ such that the group(s) of germs at $x$ is metabelian but not abelian, or abelian of rank greater than $1$.
Then $G$ does not embed into Thompson's group $F$.
\end{cor}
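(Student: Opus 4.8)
The plan is to deduce this directly from Theorem \ref{main1}. Since $G$ is assumed to act coherently on $\mathbf{R}$, the only hypotheses of Theorem \ref{main1} that are not literally present in the statement are finite generation and the germ condition, so I would reduce the whole corollary to checking these two items and then quote Theorem \ref{main1} verbatim.

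For the germ condition, I would observe that the two alternatives here are special cases of the two alternatives in Theorem \ref{main1}. A group that is metabelian but not abelian is in particular non-abelian, which is exactly the first alternative of Theorem \ref{main1}, and ``abelian of rank greater than $1$'' is word-for-word the second alternative. Thus the hypothesis on germs transfers with no extra work. It is worth recording why the statement is phrased through metabelian germs: near any point $x$ that it fixes, an element of Monod's group $H(\mathbf{R})$ agrees with (the germ of) a single projective transformation fixing $x$, hence its germ lies in the stabilizer of a boundary point in $\PSL(2,\mathbf{R})$, which is the affine Borel subgroup $\{x\mapsto ax+b : a>0\}$. This affine group is metabelian, and subgroups (and one-sided products) of metabelian groups are metabelian, so \emph{every} group of germs arising in $H(\mathbf{R})$ is automatically metabelian. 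Consequently a non-abelian germ group in $H(\mathbf{R})$ is precisely a metabelian-but-not-abelian one, which explains why the general hypothesis of Theorem \ref{main1} specializes to the form stated in the corollary.

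The one point requiring care is finite generation, since Theorem \ref{main1} is stated for finitely generated groups. If $G$ is finitely generated, Theorem \ref{main1} applies immediately and yields that $G$ does not embed into $F$. Otherwise I would argue by contradiction: assuming an embedding $\iota\colon G\hookrightarrow F$, I would select finitely many elements of $G$ realizing the coherence data together with the germ condition at $x$ — namely the two distinguished elements with trivial germs at $\pm\infty$ demanded by coherence, and a finite set whose germs at $x$ generate a non-abelian (respectively, rank $>1$ abelian) subgroup — and pass to the finitely generated subgroup $G_0\le G$ they generate. Since $G_0\le G$, the restriction of $\iota$ embeds $G_0$ into $F$, while $G_0$ still satisfies the germ condition at $x$; applying Theorem \ref{main1} to $G_0$ then gives a contradiction.

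The hard part will be exactly this last reduction, namely arranging that the induced action of the finitely generated subgroup $G_0$ on $\mathbf{R}$ remains coherent: solvability of germs and the existence of the two distinguished elements are secured by a finite choice of generators, but minimality of the $G_0$-action is delicate and is not automatic for a finitely generated subgroup of a minimally acting group. If $G$ is finitely generated from the outset, this obstacle disappears entirely and the corollary is genuinely immediate from Theorem \ref{main1}.
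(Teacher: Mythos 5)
Your proposal is correct and takes essentially the same route as the paper: the paper presents Corollary \ref{maincor3} as an immediate consequence of Theorem \ref{main1}, exactly as you do, and your explanation of the metabelian phrasing is the right one --- germs of elements of $H(\mathbf{R})$ at a fixed point are germs of single M\"obius maps lying in the affine stabilizer of that point in $\PSL_2(\mathbf{R})$, so ``metabelian but not abelian'' is simply the form that ``non abelian'' takes inside $H(\mathbf{R})$ (the sentence in the paper's printed proof about ``the element with infinitely many components of support'' is an evident copy-paste slip from the context of Theorem \ref{main3}; the introduction states the corollary follows immediately from Theorem \ref{main1}). Your worry about finite generation is legitimate but reflects an imprecision in the paper's statement rather than a divergence from its proof: the corollary tacitly inherits the finite-generation hypothesis of Theorem \ref{main1}, the paper makes no attempt at your $G_0$-reduction, and the obstacle you flag there (minimality need not pass to a finitely generated subgroup) is real --- so in the intended finitely generated case your argument and the paper's coincide.
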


\section{Preliminaries}
All actions will be right actions, unless otherwise specified.
Let $I=\mathbf{R}$ or a compact subinterval.
Given an element $g\in \textup{Homeo}^+(I)$, the \emph{support} of $g$, or $supp(g)$, is defined as the set $$\{x\in I\mid x\cdot g\neq x\}$$
Given an interval $J$ in the domain of $g$, $g$ is said to be increasing on $J$ if for each $x\in J$, $x<x\cdot g$.

\begin{defn}\label{GOgerms}
Let $G<\textup{Homeo}^+(I)$.
Given an $x\in I$, $G_x$ is defined to be the subgroup of $G$ that fixes $x$.
If $x\neq sup(I)$, the group $G_{x}^+$ is the subgroup consisting of the elements:
$$\{g\in G\mid \exists \delta>0\text{ such that }g\text{ fixes each point in }[x,x+\delta]\}$$
If $x\neq inf(I)$, the group $G_{x}^-$ is the subgroup consisting of the elements:
$$\{g\in G\mid \exists \delta>0\text{ such that }g\text{ fixes each point in }[x-\delta,x]\}$$
We remark that in the above definitions the $\delta$ in the definition depends on the element $g$.

For $G<\textup{Homeo}^+(I)$ the \emph{groups of germs}  for any $x\in I\cup \{sup(I),inf(I)\}$ are defined to be the groups
the groups $G_x/G_{x}^+$ and $G_x/G_{x}^-$.
The family of \emph{groups of germs at fixed points} of $G$ is defined to be the family $$\{G_x/G_x^+, G_x/G_x^-\mid x\in I\cup \{sup(I),inf(I)\}\}$$  
\end{defn}

Note that if $I=\mathbf{R}$, the we define the groups of germs at $\pm \infty$ analogously.
So for instance if $x=\infty$, then we have $G_{\infty}=G$ and $$G_{\infty}^-=\{g\in G\mid \exists r\in \mathbf{R}, g\text{ fixes each point in }(r,\infty)\}$$
So the group of germs at $\infty$ is $G_{\infty}/G_{\infty}^-$.

Coherent actions are defined in the introduction.
A similar definition is applicable for a group action $G<\textup{Homeo}^+(I)$ for a compact interval $I$,
which for the sake of completeness we mention below:
\begin{enumerate}
\item The action is minimal on $int(I)$, i.e. the orbits are dense in $int(I)$.
\item The groups of germs at $inf(I),sup(I)$ are solvable.
\item There is an element that has a trivial germ at $inf(I)$ and does not fix any point in an interval of the form $(r,sup(I))$ for some $r\in int(I)$.
\item There is an element that has a trivial germ at $sup(I)$ and does not fix any point in an interval of the form $(inf(I),s)$ for some $s\in int(I)$.
\end{enumerate}

Note that coherent actions of a group on a (or any) compact interval and on the real line are always topologically conjugate.
Therefore the choice of $I$ will not matter in this article.
We shall mostly follow the setup that is most convenient to the particular actions we are studying.

\subsection{Examples of coherent actions}\label{Examples}
 
There are many natural examples of coherent group actions,
such as various groups of piecewise linear and piecewise projective homeomorphisms.
We describe two such actions of Thompson's group $F$ that will feature prominently in this article:

\begin{example}\label{F1}
Thompson's group $F$ is isomorphic to the group of piecewise $PSL_2(\mathbf{Z})$ homeomorphisms of the real line with breakpoints in the set $\mathbf{Q}$.
\end{example}

\begin{example}\label{F2}
Thompson's group $F$ is isomorphic to the group of piecewise linear homeomorphisms of $[0,1]$ satisfying that:
\begin{enumerate}
\item The derivatives, wherever they exist, are powers of $2$.
\item The breakpoints, i.e. the points where derivatives do not exist, are dyadic rationals.
\end{enumerate}
\end{example}

The following are also prominent examples of actions that satisfy the conditions of coherence.

\begin{example}
(Brown-Stein-Thompson groups) Let $p_0,...,p_n$ be distinct odd primes.
The group $F(2,p_0,...,p_n)$ is the group of piecewise linear homeomorphisms of $[0,1]$ such that:
\begin{enumerate}
\item The slopes, wherever they exist, lie in $\langle 2,p_0,...,p_n\rangle <\mathbf{R}_+^*$.
\item The breakpoints lie in the set $\mathbf{Z}[\frac{1}{2p_0...p_n}]\cap (0,1)$.
\end{enumerate}
\end{example}

\begin{example}
(Bieri-Strebel groups)  The group $G(I; A, P)$ consists of all orientation preserving piecewise linear homeomorphisms of the real line with support in the interval $I$, 
slopes in a multiplicative subgroup $P$ of the positive reals and breaks in a the additive $\mathbf{Z}[P]$-submodule $A$ of $\mathbf{R}^+$ (that lie in $I$).
\end{example}

\begin{example}
(Monod's groups) Let $A<\mathbf{R}$ be a subring. Let $Q_A$ be the set of fixed points of hyperbolic elements of $PSL_2(A)$.
The group $H(A)$ is the group of piecewise $PSL_2(A)$ homeomorphisms of $\mathbf{R}$ with breakpoints in $Q_A$.
These groups were shown in \cite{Monod} to be nonamenable despite the fact that they don't contain non abelian free subgroups.
\end{example}

\begin{example}
Any overgroup of $F$ (from example \ref{F1}) in $H(\mathbf{R})$ will be a coherent action.
These groups are often nonamenable (with no non abelian free subgroups) and admit paradoxical decompositions with $25$ pieces (see \cite{Lodha} for instance), and are test cases for Question \ref{Q1}.
\end{example}

\begin{example}
(A finitely presented nonamenable example) Let $G$ be the group generated by 
$$
a(t)=t+1\qquad b(t)=\left\{\begin{array}{ll}
\smallskip
t&\text{ if }t\leq 0\\
\medskip
\dfrac{t}{1-t}&\text{ if }0\leq t\leq \dfrac12\\
\medskip
\dfrac{3t-1}{t}&\text{ if }\dfrac12\leq t\leq 1\\
t+1&\text{ if }1\leq t\end{array}\right.
\smallskip
c(t)=
\begin{cases}
t&\text{ if }t\leq 0\\
 \frac{2t}{1+t}&\text{ if }0\leq t\leq 1\\
t&\text{ if }t\geq 1\\
\end{cases}
$$
It was shown in \cite{LodhaMoore} that the group $G$ is finitely presented, nonamenable, and does not contain free subgroups.
\end{example}

\begin{example}
(The broken Baumslag-Solitar groups)
For each $\lambda\in \mathbf{Q}_{>0}$ we define the group $G_{\lambda}<\textup{Homeo}^+(\mathbf{R})$ as generated by $a(t)=t+1$
together with $$b_+(t)=t\text{ if }t\geq 0\qquad b_+(t)=\lambda t\text{ if }t\leq 0$$ 
and
$$b_-(t)=t\text{ if }t\leq 0\qquad b_-(t)=\lambda t\text{ if }t\geq 0$$
It is easy to see that $G_{\lambda}$ contains the affine group generated by $t\to t+1,t\to \lambda t$,
which acts minimally, and hence $G_{\lambda}$ is minimal.
The groups of germs at $\pm \infty$ are metabelian, and the generators $b_+,b_-$
are the required elements in the definition of Coherent actions.
It follows that the actions of the groups $G_{\lambda}$ are coherent.
In \cite{BonnatiLodhaTriestino} it was shown that these groups do not admit faithful $C^1$-actions on a closed interval.
\end{example}

\begin{example}
(Minimal pre-chain groups)  We let $n\geq 2$ and let $\mathcal{J}=\{J_1,\ldots,J_n \}$ 
be a chain of open intervals such that $inf(J_{i+1})\in J_i$ and $sup(J_{i+1})>sup(J_i)$ for each $i<n$.
We consider a collection of homeomorphisms $\{f_1,\ldots,f_n \}$ such that  $supp(f_i)=J_i$
and such that $f_i(t)>t$ for each $t\in int(J_i)$.
We set $G=\langle f_1,...,f_n\rangle <\Homeo^+(\mathbf{R})$.
Such a group is called a \emph{pre-chain group}.
Such groups were studied extensively in \cite{KKL}.
A pre-chain group acting minimally is a coherent action.
\end{example}

We remark that in the above examples which involve ``piecewise'' constructions, each element is only allowed to have finitely many breakpoints.
However, one may consider analogous ``piecewise'' coherent actions where the number of breakpoints is allowed to be infinite.
In general we may consider groups with very complicated dynamical descriptions.
For instance, the following examples arise from groups with trivial or abelian germs at $\pm \infty$.

\begin{example}
Let $\textup{Homeo}^+_C(\mathbf{R})$ be the group of homeomorphisms generated by elements of $\textup{Homeo}^+(\mathbf{R})$
which satisfy that their supports are contained in compact intervals.
Let $\textup{Homeo}^+_T(\mathbf{R})$ be the group of homeomorphisms generated by elements of $\textup{Homeo}^+(\mathbf{R})$
which satisfy that their germs at $\pm \infty$ are (possibly trivial) translations.
Then for any subgroup $H<\textup{Homeo}^+_C(\mathbf{R})$ or $H<\textup{Homeo}^+_T(\mathbf{R})$, the group $G$ generated by $H$ and $F$ (as in example \ref{F1}) is coherent.
\end{example}

More generally, we can construct classes of examples with solvable germs at $\pm \infty$ of desired length.
In the proof of Lemma \ref{solvability}, we construct for each $n\in \mathbf{N}$ a coherent action whose underlying group admits a solvable quotient of length $n$.

Given a finitely generated group $G<\textup{Homeo}^+(I)$, an open interval $J\subset I$ is said to be an \emph{orbital},
if it is $G$-invariant and there is no proper invariant subinterval which is $G$-invariant.
Equivalently, $I$ is a connected component of the support of $G$.
The following is elementary.

\begin{lem}\label{orbital}
Given a group $G<\textup{Homeo}^+(I)$, there is a family of closed intervals $I_0,I_1,...$ in $I$
such that:
\begin{enumerate}
\item $int(I_n)\cap int(I_m)=\emptyset$ if $n\neq m$ and $\bigcup_{n\in \mathbf{N}} I_n=I$.
\item Each $I_n$ is either an orbital for $G$ or $G$ fixes each point in $I_n$. 
\end{enumerate}
\end{lem}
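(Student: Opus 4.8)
The plan is to reduce the statement to the elementary structure theory of open subsets of the real line. First I would introduce the \emph{global support} of $G$, namely $U=\bigcup_{g\in G}supp(g)$. Since each $supp(g)$ is open and an arbitrary union of open sets is open, $U$ is an open subset of $I$. Its complement $F=I\setminus U$ is therefore closed, and by construction a point $x$ lies in $F$ precisely when $x\cdot g=x$ for every $g\in G$; in other words $G$ fixes each point of $F$ pointwise. Thus the two ``types'' of interval demanded by the conclusion correspond exactly to the two pieces $U$ and $F$ of this partition of $I$, and the whole task is to repackage $U$ and $F$ as a family of closed intervals with disjoint interiors.

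Next I would invoke the standard fact that every open subset of $\mathbf{R}$ (hence of $I$) is a countable disjoint union of open intervals, and write $U=\bigsqcup_{n}J_n$ for its connected components. By the definition of orbital recalled just above the lemma --- a maximal $G$-invariant open interval, equivalently a connected component of the support --- each $J_n$ is an orbital for $G$, and these supply the ``orbital'' intervals of the conclusion. Passing to closures $\overline{J_n}$ gives closed intervals whose interiors are the pairwise disjoint sets $J_n$, so the disjoint-interior condition holds among these pieces and their endpoints are automatically fixed points. It then remains to cover $F$ by closed intervals on which $G$ acts trivially and to fit everything into a single countable enumeration. For this I would decompose $F$ through its interior: $int(F)$ is open, hence again a countable disjoint union of open intervals, whose closures are nondegenerate closed intervals of global fixed points; and the boundary points of $F$ that are endpoints of some orbital are already absorbed into the corresponding $\overline{J_n}$.

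The step requiring genuine care --- and the place I expect the main obstacle to sit --- is the residual set of fixed points: those points of $F$ lying in neither $int(F)$ nor the endpoint set of any orbital. Each such point $x$ is a global fixed point and can only be included as a degenerate interval $[x,x]$, and the content of the lemma (as stated with the index set $\mathbf{N}$) rests on this residue being countable, so that the three families --- the orbital closures, the closures of the components of $int(F)$, and the residual degenerate fixed intervals --- can be enumerated jointly as $I_0,I_1,\dots$. Granting this, verifying that they cover $I$ with pairwise disjoint interiors is a routine point-set check, since distinct pieces can meet only at shared endpoints. I would therefore organize the argument so that the countability of the orbital family and of the components of $int(F)$ (immediate from second countability of $\mathbf{R}$) carries the bulk of the work, and treat the control of the residual boundary fixed points as the one delicate point to which the ``elementary'' proof must attend.
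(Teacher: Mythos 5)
The paper offers no argument here at all: the lemma is prefaced by ``The following is elementary'' and left unproved (nor, as far as I can see, is it ever invoked later), so your proposal has to stand on its own. Most of it does: the components of the global support $U=\bigcup_{g\in G}supp(g)$ are precisely the orbitals (each component is $G$-invariant because its endpoints lie outside $U$, hence are global fixed points, and an orientation-preserving homeomorphism fixing the endpoints of an open interval preserves it, which matches the paper's ``equivalently, a connected component of the support'' definition); their closures have pairwise disjoint interiors; and the closures of the components of $int(F)$, where $F=I\setminus U$, supply the nondegenerate pointwise-fixed pieces.

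But the step you flag as delicate and then ``grant'' --- countability of the residual set of fixed points lying neither in $int(F)$ nor among the endpoints of orbitals --- is a genuine gap, and in fact it fails. Take $g\in\textup{Homeo}^+([0,1])$ whose fixed point set is the middle-thirds Cantor set $C$ (insert a nontrivial homeomorphism on each complementary gap) and let $G=\langle g\rangle$. Then $int(F)=\emptyset$, the orbitals are exactly the gaps, and the closures of the orbitals meet $C$ only in the countably many gap endpoints; moreover any closed interval pointwise fixed by $G$ is contained in $C$, which is totally disconnected, so it must be a single point. Consequently every family satisfying conditions (1) and (2) must contain a degenerate interval $[x,x]$ for each of the uncountably many $x\in C$ that are not gap endpoints, and no enumeration indexed by $\mathbf{N}$ can exist. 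So the countable version you set out to prove is false as stated; the lemma holds only if the family is allowed to be uncountable --- equivalently, if one takes the closures of the orbitals together with all connected components of $F$, degenerate ones included. Your instinct that the residual boundary points are where ``the main obstacle sits'' was exactly right; the correct resolution is not to control that residue but to recognize that it forces a reformulation of the statement (or a restriction to actions whose global fixed set has countably many ``exotic'' boundary points), a defect that traces back to the paper's unproved assertion rather than to your construction.
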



Given a group $G<\textup{Homeo}^+(I)$, a closed $G$-invariant set $J\subset I$ is said to be a \emph{minimal invariant set} if the action of $G$ restricted to $J$ is minimal, i.e. has dense orbits.
We say that $J$ is \emph{exceptional} if $J$ is perfect and totally disconnected.
The following holds (See \cite{Navas}, section $2.1.2$).

\begin{lem}\label{minimalinvariant}
Let $G<\textup{Homeo}^+(I)$ be an action of a finitely generated group such that there is no proper invariant subinterval of $I$.
(In particular, there are no global fixed points in $int(I)$.)
Then either of the following holds:
\begin{enumerate}
\item The action of $G$ on $int(I)$ is minimal.

\item There is a discrete subset $\Gamma\subset int(I)$ such that $\Gamma$ is $G$-invariant.

\item There is an exceptional, minimal $G$-invariant set $\Gamma\subset int(I)$. 
Moreover, if there is an $x\in int(I)$ such that $x$ is an accumulation point of each orbit of $G$,
then such a set is unique and equals $\overline{x\cdot G}$.

\end{enumerate}
\end{lem}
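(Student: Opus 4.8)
The plan is to produce a minimal nonempty closed $G$-invariant subset $\Gamma \subseteq int(I)$ and then read off the trichotomy from the topological type of $\Gamma$. Call a nonempty closed $G$-invariant set \emph{minimal} if it has no nonempty proper closed $G$-invariant subset; observe first that two distinct minimal sets are disjoint, since the intersection of two such sets is again closed and $G$-invariant, hence by minimality either empty or equal to both. The discrete case is disposed of immediately: fix $x\in int(I)$; if some orbit $x\cdot G$ has no accumulation point in $int(I)$, then $x\cdot G$ is itself closed and discrete in $int(I)$ and $G$-invariant, so alternative (2) holds with $\Gamma = x\cdot G$.

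So I may assume every orbit accumulates at some point of $int(I)$ and look for a minimal closed invariant set contained in $int(I)$. \textbf{This existence step is the main obstacle:} when $I=\mathbf{R}$ the set $int(I)$ is not compact, so Zorn's lemma does not apply directly to the family of closed $G$-invariant sets, since a descending chain could intersect to $\emptyset$. Here the hypothesis that $I$ has no proper $G$-invariant subinterval is essential. Starting from the orbit closure $\overline{x\cdot G}$ and repeatedly passing to derived sets (sets of accumulation points, which are again closed and invariant and nonempty under our standing assumption), one argues that no such descending chain can collapse to the empty set, because collapse would produce a proper invariant subinterval or contradict that every orbit accumulates; this yields a minimal $\Gamma$.

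With $\Gamma$ in hand I classify it using two auxiliary closed $G$-invariant sets: the topological boundary $\partial\Gamma$ and the derived set $\Gamma'$ of accumulation points of $\Gamma$. By minimality each equals $\emptyset$ or $\Gamma$. If $\partial\Gamma=\emptyset$, then $\Gamma$ is open and closed in the connected set $int(I)$, forcing $\Gamma = int(I)$; in this case every orbit closure is a nonempty closed invariant set and hence equals $int(I)$, so every orbit is dense and we are in alternative (1). If $\partial\Gamma=\Gamma$, then $\Gamma$ has empty interior, and we examine $\Gamma'$: if $\Gamma'=\emptyset$ then $\Gamma$ is discrete, giving alternative (2); if $\Gamma'=\Gamma$ then $\Gamma$ is perfect with empty interior, hence totally disconnected, i.e. exceptional, giving alternative (3).

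Finally, for the uniqueness clause in (3), suppose $x\in int(I)$ is an accumulation point of every orbit of $G$. For any minimal set $\Gamma$ and any $y\in\Gamma$ we have $x\in\overline{y\cdot G}\subseteq \Gamma$, so every minimal set contains $x$; since distinct minimal sets are disjoint, $\Gamma$ is unique. As $\Gamma$ is closed and $G$-invariant it contains $\overline{x\cdot G}$, and minimality of $\Gamma$ together with the fact that $\overline{x\cdot G}$ is a nonempty closed invariant subset forces $\Gamma=\overline{x\cdot G}$, as claimed.
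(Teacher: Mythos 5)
Your classification step and the conditional uniqueness argument are correct (and match the standard treatment), but the existence of a minimal set---which you yourself flag as the main obstacle---is precisely the step you do not actually prove, and it is the real content of the lemma. (The paper gives no proof either: it cites Section 2.1.2 of Navas's book, where exactly this existence step is carried out.) Your sketch has two genuine defects. First, transfinite iteration of derived sets starting from $\overline{x\cdot G}$ stabilizes at the perfect kernel of that orbit closure, which need \emph{not} be minimal: the orbit closure of a single point can accumulate on two disjoint invariant Cantor sets (distinct minimal sets are possible on the line absent the accumulation hypothesis of part (3)), in which case the perfect kernel contains both and is not minimal. So even granting nonemptiness, the process does not ``yield a minimal $\Gamma$.'' Second, the claim that a descending (transfinite) chain of nonempty closed invariant sets cannot have empty intersection is asserted, not argued; an empty intersection does not visibly ``produce a proper invariant subinterval.'' Tellingly, you never use finite generation, which is a hypothesis of the lemma and is known to be needed for the existence of minimal sets in the noncompact setting of $int(I)$.

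The standard repair runs as follows. Fix a finite symmetric generating set $S$ and a basepoint $x_0\in int(I)$, and define $\sigma(y)=\max_{s\in S} y\cdot s$. This is a strictly increasing homeomorphism of $int(I)$ with $\sigma(y)>y$ for every $y\in int(I)$: if $\sigma(y)=y$ then, by symmetry of $S$, every generator fixes $y$, so $y$ is a global fixed point and each component of $int(I)\setminus\{y\}$ is a proper invariant subinterval, contradicting the hypothesis. Now given any $y\in int(I)$, its orbit is unbounded below in $int(I)$ (otherwise the interior of the convex hull of the orbit is a proper invariant subinterval), so pick an orbit point $z_0\le x_0$ and set $z_{k+1}=\sigma(z_k)$; each $z_k$ lies in the orbit of $y$, the sequence is strictly increasing, and it cannot converge to a point of $int(I)$ (the limit would be fixed by $\sigma$, hence a global fixed point). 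Therefore some first $z_k$ exceeds $x_0$, and by monotonicity of $\sigma$ it lies in the compact interval $K=[x_0,\sigma(x_0)]\subset int(I)$. Thus \emph{every} orbit, and hence every nonempty closed $G$-invariant subset of $int(I)$, meets $K$, and Zorn's lemma now applies: along any descending chain $\{C_\alpha\}$ of nonempty closed invariant sets, the compacta $C_\alpha\cap K$ are nested and nonempty, so $\bigcap_\alpha C_\alpha\neq\emptyset$. With this in place of your derived-set sketch, the remainder of your argument (the dichotomies for $\partial\Gamma$ and $\Gamma'$, and the uniqueness clause) goes through verbatim.
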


The following is a general dichotomy for $1$-dimensional homeomorphism
groups, and the proofs are variations of \cite{Navas}, which we omit.

\begin{lem}\label{DynDichotomy}
Let $G<\textup{Homeo}^+(I)$ such that $G$ admits a unique exceptional, minimal, invariant set $\Gamma\subset I$.
Then there is a map $\Phi:I\to I$, a group $H<\textup{Homeo}^+(I)$ and a homomorphism $\psi:G\to H$,
such that:
\begin{enumerate}
\item $H$ is isomorphic to $G/K$ where $K$ is the kernel of the restriction of the action of $G$ to $\Gamma$.
\item For each $g\in G$, $\Phi\circ g=\psi(g)\circ \Phi$.
\item The action of $H$ on $int(I)$ is minimal.
\end{enumerate}
\end{lem}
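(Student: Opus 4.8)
The statement is the classical semiconjugacy obtained by collapsing the gaps of an exceptional minimal set, so the plan is to carry out that construction explicitly and then verify the three listed properties. First I would fix the exceptional minimal invariant set $\Gamma$. Since $\Gamma$ is a closed, totally disconnected subset of $I$, it has empty interior, so its complement $I\setminus\Gamma$ is a countable disjoint union of open ``gap'' intervals, each with endpoints in $\Gamma$ (or at $\partial I$). Because $G$ preserves $\Gamma$ and acts by orientation-preserving homeomorphisms, each $g\in G$ permutes these gaps, mapping each gap onto another gap by an order-preserving homeomorphism. I would then define $\Phi:I\to I$ to be the monotone continuous surjection that collapses the closure of each gap to a single point; concretely, $\Phi$ is the quotient map for the equivalence relation whose only nontrivial classes are the closed gaps. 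Since $\Gamma$ is perfect, every point of $\Gamma$ is an endpoint of at most one gap, so the fibers of $\Phi$ are precisely the closed gaps together with the singletons $\{x\}$ for $x\in\Gamma$ that are not gap endpoints, and the quotient is again an interval of the same type as $I$, which I identify with $I$.

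Next I would descend the action. Since each $g\in G$ respects the gap structure, it respects the collapsing equivalence relation, and so descends to a homeomorphism $\psi(g)$ of the quotient interval satisfying the intertwining relation $\Phi\circ g=\psi(g)\circ\Phi$ (equivalently $\Phi(x\cdot g)=\Phi(x)\cdot\psi(g)$); orientation is preserved throughout. By the universal property of the quotient, applied to both $g$ and $g^{-1}$, each $\psi(g)$ is a genuine (two-sided continuous) element of $\Homeo^+(I)$, and $g\mapsto\psi(g)$ is a homomorphism onto $H:=\psi(G)<\Homeo^+(I)$. This is exactly property (2). To obtain property (1) I would identify $\ker\psi$ with $K$. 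If $g$ acts trivially on $\Gamma$, then it fixes every gap endpoint, hence sends each gap to itself, so $\psi(g)=\id$; thus $K\subseteq\ker\psi$. Conversely, if $\psi(g)=\id$ then $g$ preserves every fiber of $\Phi$: a point of $\Gamma$ with singleton fiber is fixed, while for a gap endpoint the orientation-preserving property forces $g$ to send the gap to itself and to fix both of its endpoints (it can neither swap them nor move the gap). Hence $g$ fixes $\Gamma$ pointwise and $\ker\psi\subseteq K$, so $H\cong G/K$.

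Finally, for property (3), I would note that $\Phi(\Gamma)$ is all of the quotient interval, so every interior point of the quotient is $\Phi(x)$ for some $x\in\Gamma$. Continuity of $\Phi$ together with the density of each $G$-orbit in $\Gamma$ gives that each $H$-orbit $\Phi(x)\cdot H=\Phi(x\cdot G)$ is dense in $\Phi(\Gamma)$, whence $H$ acts minimally on $int(I)$. The steps that genuinely require care — and which I expect to be the main obstacle — are verifying that the order-quotient really is homeomorphic to an interval of the same type as $I$ and that $\Phi$ is continuous (this is where total disconnectedness supplies the gap decomposition and perfectness of $\Gamma$ prevents adjacent gap endpoints from being accidentally identified), and checking that each $\psi(g)$ is a homeomorphism rather than merely a monotone bijection; both are handled by the quotient universal property, but they are the points where the ``variation of Navas'' must actually be pinned down.
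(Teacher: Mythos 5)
Your proposal is correct and is precisely the standard gap-collapsing semiconjugacy from \cite{Navas} (section 2.1.2) that the paper itself invokes when it omits the proof, including the two delicate points you rightly flag: perfectness of $\Gamma$ ensuring distinct gaps have disjoint closures, and orientation-preservation forcing an element that preserves each fiber to fix both endpoints of every gap, which gives $\ker\psi=K$. Nothing further is needed.
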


A tool we shall require is the following theorem, due to Higman (\cite{Higman}).
Let $\Gamma$ be a group of bijections of some set $E$. 
For $g\in \Gamma$ define its \emph{moved set} $D(g)$ as the set of points $x\in E$ such that $g(x)\ne x$. This is analogous to the support, but since \emph{a priori} there is no topology on $E$, we do not take the closure.

\begin{thm}\label{Higman}
(Higman's simplicity criterion) Suppose that for all $\alpha, \beta, \gamma\in \Gamma\setminus \{1_{\Gamma}\}$, there is a $\rho\in \Gamma$ such that:
$$\gamma(\rho(S))\cap \rho(S)=\varnothing\qquad  \text{where } S=D(\alpha)\cup D(\beta)$$
Then $\Gamma'$ is simple.
\end{thm}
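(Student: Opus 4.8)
The plan is to isolate a single commutator identity as the engine, and then deploy it twice: once to show that every nontrivial normal subgroup of $\Gamma$ already contains $\Gamma'$, and once ``internally'' to $\Gamma'$ to upgrade this to genuine simplicity. The engine is the following. Fix $\alpha,\beta\in\Gamma$ and write $S=D(\alpha)\cup D(\beta)$. Suppose $u\in\Gamma$ satisfies $u(S)\cap S=\varnothing$; this forces $u^{-1}(S)\cap S=\varnothing$ as well. Then $u^{-1}\beta u$ has moved set $u^{-1}(D(\beta))\subseteq u^{-1}(S)$, disjoint from $S$, so it commutes with both $\alpha$ and $\beta$. Setting $w:=\beta\,(u^{-1}\beta u)^{-1}=\beta u^{-1}\beta^{-1}u=[\beta^{-1},u]$, a short computation using these commutations yields the identity $[\alpha,\beta]=[\alpha,w]$. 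The point is that $w$ is a product of a conjugate of $u^{-1}$ and $u$, and $[\alpha,w]=\alpha^{-1}w^{-1}\alpha\,w$, so membership of $[\alpha,\beta]$ in a subgroup can be read off as soon as that subgroup contains $u$ and is invariant under the conjugations that occur.

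First I would show that every nontrivial normal subgroup $N\trianglelefteq\Gamma$ contains $\Gamma'$. Given a commutator $[\alpha,\beta]$ and a nontrivial $u\in N$, I apply the hypothesis to the triple $(\alpha,\beta,u)$ to obtain $\rho\in\Gamma$ with $u(\rho S)\cap\rho S=\varnothing$. Replacing $\alpha,\beta$ by $\rho\alpha\rho^{-1},\rho\beta\rho^{-1}$ merely conjugates $[\alpha,\beta]$, which is harmless because $N\trianglelefteq\Gamma$, and arranges $u(S)\cap S=\varnothing$. Now the engine gives $[\alpha,\beta]=[\alpha,w]$ with $w=[\beta^{-1},u]\in N$ (as $N$ is normal and contains $u$), and $[\alpha,w]=\alpha^{-1}w^{-1}\alpha\,w\in N$. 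Since $\Gamma'$ is generated by commutators, $N\supseteq\Gamma'$.

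To reach simplicity of $\Gamma'$ I would first record that $\Gamma'$ is perfect: one application of the engine replaces $\beta$ by an element $w\in\Gamma'$, and applying it again to the remaining coordinate replaces $\alpha$ by an element of $\Gamma'$ too, exhibiting $[\alpha,\beta]$ as a commutator of two elements of $\Gamma'$, i.e. $[\alpha,\beta]\in\Gamma''$; as such commutators generate, $\Gamma'=\Gamma''$. Then, for a nontrivial $N\trianglelefteq\Gamma'$, perfectness reduces the goal $N=\Gamma'$ to showing $[\mu,\nu]\in N$ for all $\mu,\nu\in\Gamma'$. Running the engine with some nontrivial $u\in N$ and with $\mu,\nu\in\Gamma'$ achieves this, provided one can first arrange $u(S)\cap S=\varnothing$ using a conjugator inside $\Gamma'$: under that proviso every conjugation occurring is by an element of $\Gamma'$, which leaves $N$ invariant, so $w=[\nu^{-1},u]\in N$ and $[\mu,w]\in N$.

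The main obstacle is exactly the step separating ``normal in $\Gamma$'' from ``normal in $\Gamma'$.'' To run the engine internally I must arrange $u(S)\cap S=\varnothing$, and for this the support-moving conjugator $\rho$ must itself lie in $\Gamma'$, since an arbitrary $\rho\in\Gamma$ would conjugate $N$ out of itself. Thus the crux is to prove that $\Gamma'$ inherits Higman's condition: for nontrivial $\gamma\in\Gamma'$ and $S=D(\mu)\cup D(\nu)$ with $\mu,\nu\in\Gamma'$, there is $\rho\in\Gamma'$ with $\gamma(\rho S)\cap\rho S=\varnothing$. I expect to establish this by exploiting the abundance of room furnished by iterating the ambient hypothesis: one produces several pairwise well-separated translates of $S$, separated also from their $\gamma$-images, and then realizes the required displacement of $S$ by a commutator $[a,b]$, which automatically lies in $\Gamma'$. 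Carrying out the separation bookkeeping so that $[a,b]$ genuinely carries $S$ into a region avoided by $\gamma$ is the delicate part of the argument.
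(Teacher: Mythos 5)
The paper offers no proof of this statement---it is quoted as a tool from Higman's 1954 paper \cite{Higman}---so your attempt has to be judged on its own terms rather than against an internal argument. Your ``engine'' is correct: writing $[x,y]=x^{-1}y^{-1}xy$, if $v(S)\cap S=\varnothing$ for $S=D(\alpha)\cup D(\beta)$, then $c=v^{-1}\beta v$ has moved set $v^{-1}(D(\beta))$ disjoint from $S$, hence $c$ commutes with $\alpha$, with $\beta$, and also with $[\alpha,\beta]$ (since $D([\alpha,\beta])\subseteq S$), and so for $w=\beta c^{-1}=[\beta^{-1},v]$ one gets $[\alpha,w]=c\,[\alpha,\beta]\,c^{-1}=[\alpha,\beta]$. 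Your first deployment is complete: for $1\neq N\trianglelefteq\Gamma$ all conjugations that occur (by $\rho$, by $\beta_1$, by $\alpha_1$) are absorbed by normality of $N$ in $\Gamma$, so $\Gamma'\leq N$. The perfectness step likewise goes through (the conjugators $\rho_1,\rho_2$ are harmless because $\Gamma''\trianglelefteq\Gamma$, and the degenerate case $w=1$ is trivial).

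The gap is in the only step that constitutes the actual theorem: simplicity of $\Gamma'$. You correctly reduce it, via perfectness, to the claim that $\Gamma'$ itself inherits Higman's condition---for nontrivial $\gamma,\mu,\nu\in\Gamma'$ there should exist $\rho\in\Gamma'$ (not merely $\rho\in\Gamma$) with $\gamma(\rho S)\cap\rho S=\varnothing$, so that $v=\rho^{-1}u\rho\in N$ for $u\in N\trianglelefteq\Gamma'$---but you never prove this; you only ``expect to establish'' it by producing well-separated translates and realizing the displacement by a commutator $[a,b]$. That plan is not mere bookkeeping, and as sketched it does not close: in an abstract group of bijections the hypothesis gives no control over where an element sends any points outside the specific displaced translates it produces, and a commutator $[a,b]$ applied to $S$ begins (and ends) with a factor whose effect on the relevant sets is completely unknown. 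Concretely, from the hypothesis one easily gets $u_1=a^{-1}\gamma a$ with $u_1 S\cap S=\varnothing$ and $S\cup u_1S\cup u_1^{-1}S\subseteq D(u_1)$, and a second application displaces a copy of $D(u_1)$; but each candidate commutator in $a,b,u_1$ intended to carry $S$ into the displaced region has an uncontrolled first or last factor, and nothing in your outline explains how to circumvent this. So the decisive idea of Higman's argument is missing. What you have actually proved---every nontrivial normal subgroup of $\Gamma$ contains $\Gamma'$, and $\Gamma'$ is perfect---is a strictly weaker statement than simplicity of $\Gamma'$, and the paper uses the full strength (through Lemma \ref{HigmanCor} it needs $G_c'$, and hence $G^{(n+1)}$, to be genuinely simple, e.g.\ for part (3) of Theorem \ref{structure}).
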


$G<\textup{Homeo}^+(\mathbf{R})$ is said to be \emph{locally minimal} if for each triple $(U,V,x)$ where $U\subset V$ are open intervals and $x\in V$,
there is an element $g\in G$ such that:
\begin{enumerate}
\item $g$ fixes $I\setminus V$ pointwise.
\item $x\cdot g\in U$
\end{enumerate}

The main tool we shall require to prove rigidity of coherent actions is Rubin's theorem (see Section $9$ in \cite{Brin}).
We need the following terminology before we state the Theorem.
Let $X$ be a locally compact, Hausdorff space with no isolated points.
For any open set $V\subset X$, $G_V$ is defined as the subgroup of $G$ that consists of elements that pointwise fix $X\setminus V$.
$G<\textup{Homeo}^+(X)$ is said to be \emph{locally dense} if for each pair $(V,x)$ where $V$ is an open set and $x\in V$,
$\overline{x\cdot G_V}$ has non empty interior.

\begin{thm}\label{rubin}
(Rubin's theorem) Let $X$ be a locally compact, Hausdorff space with no isolated points.
Let $G,H<\textup{Homeo}(X)$ be isomorphic groups such that both the actions are \emph{locally dense}.
Then for each isomorphism $\nu:G\to H$ there is a homeomorphism $\phi:X\to X$ such that 
$\nu(f)=\phi^{-1} f \phi$ for each $f\in G$.
\end{thm}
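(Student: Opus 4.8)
The plan is to recover the space $X$, up to homeomorphism, purely from the abstract isomorphism type of the acting group, and to do this canonically enough that a group isomorphism automatically yields a conjugating homeomorphism. The starting point is that every $g\in G$ determines its support $\operatorname{supp}(g)=\overline{\{x\in X:gx\neq x\}}$, a regular closed set whose interior is regular open; local density is precisely the hypothesis guaranteeing that such supports are abundant and that the action has ``enough room'' near every point. First I would isolate a purely group-theoretic definition of the basic relations among supports, in particular ``$\operatorname{supp}(f)$ and $\operatorname{supp}(g)$ are disjoint'' and ``$\operatorname{supp}(f)\subseteq\operatorname{supp}(g)$''. Disjointness is closely tied to commutation, but to pin it down exactly one exploits local density to manufacture elements supported inside arbitrarily small open sets; this is what converts these geometric relations into formulas in the language of groups.

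Granting such definitions, the next step is to build from $G$ an abstract complete Boolean algebra isomorphic to $\mathrm{RO}(X)$, the algebra of regular open subsets of $X$, with the lattice operations read off from the group-theoretic support relations. This is the technical heart of the argument: one must verify that the assignment $g\mapsto\operatorname{int}(\operatorname{supp}(g))$ generates $\mathrm{RO}(X)$ and that meet, join, and complementation correspond to group-theoretically definable operations. I would then recover the points of $X$ as suitable ultrafilters on this Boolean algebra. For a locally compact Hausdorff space with no isolated points, a point $x\in X$ corresponds to the ultrafilter of regular open sets ``converging to $x$'', and I would topologize the set of such ultrafilters so that the natural embedding of $X$ into this ultrafilter space is a homeomorphism onto it. The no-isolated-points and local-compactness hypotheses are exactly what make this ultrafilter space recover $X$ itself rather than some larger compactification.

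Finally, given an isomorphism $\nu:G\to H$, the canonicity of the construction means that $\nu$ carries the reconstructed Boolean algebra attached to $(G,X)$ isomorphically onto the one attached to $(H,X)$, and hence induces a homeomorphism $\phi:X\to X$ of the reconstructed point-spaces. Since each $f\in G$ acts on $\mathrm{RO}(X)$ by $U\mapsto f(U)$ and $\nu(f)$ acts in the corresponding way on the target algebra, tracing the action through $\phi$ gives $\phi^{-1}f\phi=\nu(f)$ for all $f\in G$, which is the desired conclusion. The main obstacle, by a wide margin, is the first step: producing the group-theoretic characterization of the support lattice and proving that local density is exactly strong enough to make it work. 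Everything afterward, assembling the Boolean algebra, extracting points as ultrafilters, and checking that $\phi$ intertwines the two actions, is essentially bookkeeping once the dictionary between the group structure and $\mathrm{RO}(X)$ is established.
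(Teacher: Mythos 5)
First, a point of comparison: the paper does not prove this statement at all --- it is imported as a known black box (Rubin's reconstruction theorem), with the reader referred to Section~9 of \cite{Brin} for a proof. So there is no internal argument to measure you against; the question is whether your sketch would survive as a standalone proof. Its overall architecture --- turn supports into a group-theoretically definable lattice, rebuild the algebra of regular open sets $\mathrm{RO}(X)$, recover points, and let canonicity produce the conjugating homeomorphism --- is indeed the standard Rubin strategy. But as written it has two genuine gaps, one of which is a mathematical error rather than an omission.

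The error is in the point-recovery step. You propose to recover $X$ as a space of ultrafilters on the complete Boolean algebra $\mathrm{RO}(X)$. This cannot work: the Stone space of $\mathrm{RO}(X)$ is the Gleason cover (absolute) of a compactification of $X$, an extremally disconnected space that is essentially never homeomorphic to $X$; moreover the neighborhood filter of a point $x$ is generally \emph{not} an ultrafilter in $\mathrm{RO}(X)$, since $x$ may lie on the boundary of a regular open set, in which case neither that set nor its Boolean complement belongs to the filter. The Boolean algebra alone does not remember $X$ (for instance, wildly non-homeomorphic spaces share the same $\mathrm{RO}$), so no topologization of the ultrafilter space can give back $X$. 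In Rubin's actual argument the points are reconstructed using the \emph{group action}, not just the algebra: one characterizes, among filters on the reconstructed algebra, those that arise as neighborhood filters of points, and local density (via the subgroups $G_V$ and the condition that $\overline{x\cdot G_V}$ has nonempty interior) is what makes this characterization first-order in the group and what separates points. This is precisely where local compactness, Hausdorffness, and the absence of isolated points earn their keep, and it is not bookkeeping. The second gap is the one you acknowledge yourself: the dictionary translating ``$\operatorname{supp}(f)$ and $\operatorname{supp}(g)$ are disjoint'' and the lattice operations into group language is asserted, not constructed --- and your remark that disjointness is ``closely tied to commutation'' underlines the difficulty, since commutation neither implies nor is implied by disjointness of supports in general; Rubin's formulas are considerably more elaborate. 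With the hardest definability step deferred and the point-recovery step as stated incorrect, the proposal is a reasonable roadmap toward the literature but not a proof.
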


We remark that in \cite{KKL} it was demonstrated that for the case of groups of orientation preserving homeomorphisms of $\mathbf{R}$ (or an open interval),
locally dense and locally minimal are equivalent.
(Of course, it is clear that locally minimal actions are locally dense.)
Therefore in our application of Rubin's theorem, we shall use the notion of locally minimal.

For the basics on Thompson's group $F$ we refer the reader to \cite{CFP},
and for the basics on Brown-Stein-Thompson groups and Bieri-Strebel groups we refer the reader to \cite{Stein} and \cite{BSt} respectively.
We shall only need standard facts about these groups.

Other notions that we shall need concern the theory of orbit equivalence relations.
Given a group action $G<\textup{Homeo}^+(\mathbf{R})$,
we consider the associated orbit equivalence relation $E\subset \mathbf{R}\times \mathbf{R}$
which is defined as the set of pairs $(x,y)$ with the property that there is a $g\in G$ such that $x\cdot g=y$.
For $x\in \mathbf{R}$, we denote by $[x]$ as the orbit of $x$ in $E$.
If the underlying group is countable, this is a borel equivalence relation with countable equivalence classes.
An equivalence relation is said to be \emph{finite} or \emph{countable} if the equivalence classes are respectively finite or countable.
A countable borel equivalence relation $E$ is said to be \emph{hyperfinite}, if it can be expressed as an increasing union of finite equivalence relations.

The following is folklore (See example $3.3$ in \cite{Moore}) and shall be required for the purposes of this article:

\begin{thm}
The action of $PSL_2(\mathbf{Z})$ on $\mathbf{R}\cup \{\infty\}$ by projective transformations produces a hyperfinite equivalence relation.
\end{thm}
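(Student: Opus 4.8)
The plan is to identify the orbit equivalence relation with the classical tail equivalence relation on continued fraction expansions, and then to invoke standard closure properties of hyperfinite Borel equivalence relations. Write $E$ for the orbit equivalence relation of the $\PSL_2(\mathbf{Z})$-action on $\mathbf{R}\cup\{\infty\}$. The set $\mathbf{Q}\cup\{\infty\}$ is a single $\PSL_2(\mathbf{Z})$-orbit (the action on rational projective points is transitive, since any coprime pair $(p,q)$ occurs as the first column of a matrix in $\mathrm{SL}_2(\mathbf{Z})$, which sends $\infty$ to $p/q$), and it is a $\PSL_2(\mathbf{Z})$-invariant countable Borel set whose complement is the invariant Borel set $X$ of irrationals. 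Hyperfiniteness is preserved under assembling a countable Borel equivalence relation from its restrictions to complementary invariant Borel sets, and the restriction of $E$ to the single countable orbit $\mathbf{Q}\cup\{\infty\}$ is trivially hyperfinite (the complete relation on a countable set is the increasing union of finite equivalence relations). Hence it suffices to prove that $E\restriction X$ is hyperfinite.

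On $X$ I would use the continued fraction digit map. Each irrational $x$ has a unique expansion $x=[a_0;a_1,a_2,\dots]$ with $a_0\in\mathbf{Z}$ and integers $a_k\ge 1$ for $k\ge 1$, and the assignment $x\mapsto(a_0,a_1,a_2,\dots)$ is a Borel isomorphism of $X$ onto $\mathbf{Z}\times\{1,2,3,\dots\}^{\mathbf{N}}$. Transporting $E\restriction X$ across this isomorphism, the key classical input is Serret's theorem: two irrationals lie in the same $\operatorname{PGL}_2(\mathbf{Z})$-orbit (that is, are related by some $z\mapsto(az+b)/(cz+d)$ with $a,b,c,d\in\mathbf{Z}$ and $ad-bc=\pm1$) if and only if their continued fraction expansions share a common tail, i.e.\ $a_{m+k}=b_{n+k}$ for all $k$ and suitable $m,n$. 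Thus the $\operatorname{PGL}_2(\mathbf{Z})$-orbit equivalence relation on $X$ is exactly the tail equivalence relation $E_t$ of the one-sided shift on the digit sequences.

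It then remains to record two standard closure facts. First, $E_t$, the tail equivalence relation on a sequence space (eventual agreement up to a shift), is hyperfinite: it is built from the eventual-equality relation $E_0$ together with the shift, and is hyperfinite by standard results in descriptive set theory (Dougherty, Jackson and Kechris), equivalently by writing it as an increasing union of finite Borel subequivalence relations. Second, a countable Borel subequivalence relation of a hyperfinite one is again hyperfinite. Since $\PSL_2(\mathbf{Z})$ is an index-two subgroup of $\operatorname{PGL}_2(\mathbf{Z})$, we have $E\restriction X\subseteq E_t$, so $E\restriction X$ is hyperfinite; combined with the treatment of $\mathbf{Q}\cup\{\infty\}$ this shows $E$ is hyperfinite on all of $\mathbf{R}\cup\{\infty\}$.

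The step I expect to require the most care is the reduction via Serret's theorem: one must verify that the continued fraction digit map is genuinely a Borel isomorphism onto the product space (so that $E_t$ is a bona fide countable Borel equivalence relation on a standard Borel space) and that the inclusion $E\restriction X\subseteq E_t$ is precisely the passage from $\operatorname{PGL}_2(\mathbf{Z})$-orbits to $\PSL_2(\mathbf{Z})$-orbits. Conceptually this is nothing but the boundary action of the virtually free group $\PSL_2(\mathbf{Z})\cong\mathbf{Z}/2\ast\mathbf{Z}/3$ on the ends of its Bass--Serre tree (the Farey tree), where hyperfiniteness is exactly tail equivalence of geodesic rays; the continued fraction computation is the concrete incarnation of this picture, which is why the statement is regarded as folklore.
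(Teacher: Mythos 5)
Your proof is correct and is essentially the same argument the paper relies on: the paper itself gives no proof, deferring to folklore via Example 3.3 of Moore's notes \cite{Moore}, which rests on exactly your reduction --- Serret's theorem identifying the orbit relation on the irrationals with tail equivalence of continued-fraction digits, hyperfiniteness of tail equivalence (Dougherty--Jackson--Kechris), and the standard closure of hyperfiniteness under Borel subrelations and under decomposition into invariant Borel pieces. Your additional care with the single rational orbit $\mathbf{Q}\cup\{\infty\}$ and with the passage from $\operatorname{PGL}_2(\mathbf{Z})$ to the index-two subgroup $\PSL_2(\mathbf{Z})$ is sound and fills in details the folklore citation leaves implicit.
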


In particular, the action of $F$ described in example \ref{F1} produces a hyperfinite equivalence relation,
since the associated orbit equivalence relation is the same as that of the above (upon restriction to $\mathbf{R}$).
Since the actions of $F$ in \ref{F1} and \ref{F2} are topologically conjugate, and since hyperfiniteness is preserved under topological conjugacy,
both actions produce hyperfinite equivalence relations.
Note that the fact that the actions are topologically conjugate is folklore, but it is also a consequence of Theorem \ref{rigidity}. 

We shall also use the notion of a $\mu$-amenable equivalence relation.
Consider a group action $G<\textup{Homeo}^+(\mathbf{R})$,
and the associated orbit equivalence relation $E\subset \mathbf{R}\times \mathbf{R}$.
Let $\mu$ be a borel measure on $\mathbf{R}$.
Then $E$ is said to be $\mu$-amenable if there is a $\mu$-measurable assignment $x\to \nu_x$ such that:
\begin{enumerate}
\item Each $\nu_x$ is a finitely additive probability measure defined on the orbit of $x$ satisfying $\nu_x([x])=1$.
\item If $(x,y)\in E$ then $\nu_x=\nu_y$. 
\end{enumerate}
By \emph{measurable}, we mean that if $A$ is any measurable subset of $X\times X$,
then $$x\to \nu_x(\{y\mid (x,y)\in A\})$$ is measurable.
We shall need the following result.
(See for instance Theorem $3.5$ in \cite{Moore}).

\begin{thm}
Let $E\subset \mathbf{R}\times \mathbf{R}$ be a hyperfinite equivalence relation.
Then for any borel measure $\mu$ on $\mathbf{R}$, $E$ is $\mu$-amenable.
\end{thm}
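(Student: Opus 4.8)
The plan is to use the structural definition of hyperfiniteness directly: write $E=\bigcup_{n}E_n$ as an increasing union of finite Borel equivalence relations $E_0\subseteq E_1\subseteq\cdots$, form the obvious means along each finite sub-relation, and pass to a limit in $n$ to recover invariance along all of $E$. Since $\mu$-amenability depends only on the measure class of $\mu$ (the defining conditions are stated $\mu$-measurably and $\mu$-almost everywhere), and any $\sigma$-finite Borel measure is equivalent to a probability measure, I would first reduce to the case where $\mu$ is a probability measure, so that $L^1(\mathbf{R},\mu)$ is separable; the case of interest, Lebesgue measure, is $\sigma$-finite. For each $n$ and each $x$ the $E_n$-class $[x]_{E_n}$ is finite, so the normalized counting measure $\nu_x^n$ on $[x]_{E_n}$ is a probability measure supported on $[x]$, hence a finitely additive probability measure on the full orbit with $\nu_x^n([x])=1$. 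For a fixed bounded Borel $f$ the map $x\mapsto\int f\,d\nu_x^n$ is simply the average of $f$ over the finite class $[x]_{E_n}$ and is therefore $\mu$-measurable, and if $(x,y)\in E_n$ then $[x]_{E_n}=[y]_{E_n}$, so $\nu_x^n=\nu_y^n$; that is, each field $\nu^n=(\nu_x^n)_x$ is $E_n$-invariant.

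The task is then to pass to a limit in $n$ that is simultaneously $\mu$-measurable and invariant along all of $E$. Observe that for any fixed $(x,y)\in E$ there is an $m$ with $(x,y)\in E_n$ for all $n\geq m$, whence $\nu_x^n=\nu_y^n$ for all large $n$; thus the fields $\nu^n$ are \emph{asymptotically} $E$-invariant, and any reasonable limit will be genuinely $E$-invariant. The naive route — fixing a non-principal ultrafilter $\mathcal{U}$ on $\mathbf{N}$ and setting $\nu_x(A)=\lim_{\mathcal{U}}\nu_x^n(A)$ pointwise — does produce an invariant field of finitely additive probability means with $\nu_x([x])=1$, but I expect its measurability to be the genuine obstacle: a pointwise ultralimit of measurable functions need not be measurable, so this construction cannot be taken at face value.

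To resolve the measurability I would obtain the limit by a weak-$*$ compactness argument rather than pointwise. By the Feldman--Moore / Lusin--Novikov theorem, fix countably many Borel partial injections $\{\gamma_i\}_{i\in\mathbf{N}}$ whose graphs cover $E$, so that a field of means is completely determined by the measurable data $x\mapsto\nu_x(\{\gamma_i(x)\})$. Realize the fields of means as elements of the unit ball of the dual of a separable Banach space, namely an $L^1$-type space of Borel functions on $E$ modelled on $L^1(\mathbf{R},\mu;\ell^1)$ and built from the $\gamma_i$. The set of $\mu$-measurable fields of (positive, normalized) probability means is a weak-$*$ closed, bounded, convex subset, and by separability of the predual the weak-$*$ topology is metrizable there. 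I would then extract a weak-$*$ convergent subsequence $\nu^{n_k}\to\nu$; the limit $\nu=(\nu_x)_x$ is automatically a $\mu$-measurable field of finitely additive probability means with $\nu_x([x])=1$, since positivity, normalization, finite additivity and measurability are all closed linear or convex conditions preserved under weak-$*$ limits.

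Finally I would verify $E$-invariance of $\nu$. For each generator $\gamma_i$ the measurable functions $x\mapsto\nu_x^{n}(\{\gamma_i(x)\})$ agree with their $\gamma_i$-transported counterparts for all large $n$ on the relevant measurable set, by the asymptotic invariance noted above, and this equality survives the weak-$*$ limit; running over the countable family $\{\gamma_i\}$ yields $\nu_x=\nu_y$ for $\mu$-almost every $(x,y)\in E$, which is exactly condition (2) of $\mu$-amenability in the measured sense. Combined with measurability and the normalization $\nu_x([x])=1$ from the previous step, this exhibits $E$ as $\mu$-amenable. The main obstacle throughout is precisely the measurability of the limiting assignment; the finiteness of the $E_n$-classes, the $E_n$-invariance, and the passage of invariance to the limit are all routine, and it is exactly to force the limit to be measurable that the reduction to a separable predual and the weak-$*$ compactness argument are required.
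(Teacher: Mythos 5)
First, a point of comparison: the paper does not prove this statement at all --- it is quoted as a known result with a citation to Theorem 3.5 of Moore's notes \cite{Moore}. So your attempt can only be judged on its own merits, and there it has a genuine gap, located exactly at the step you yourself flagged as the crux. Your reduction ``a field of means is completely determined by the measurable data $x\mapsto\nu_x(\{\gamma_i(x)\})$'' is false for \emph{finitely additive} means: a mean on a countably infinite orbit is an element of $(\ell^\infty)^*$, not of $\ell^1$, and a purely finitely additive mean (e.g.\ any Banach limit) assigns mass $0$ to every singleton while having total mass $1$. Once you model the fields by their singleton masses, your ambient dual space is effectively $L^1(\mu;c_0)^*\cong L^\infty(\mu;\ell^1)$, and in that space the normalization $\nu_x([x])=1$ is \emph{not} weak-$*$ closed: mass escapes to infinity, just as $\delta_n\to 0$ weak-$*$ in $\ell^1=c_0^*$. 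Concretely, on any part of the space where the $E_n$-classes grow without bound --- which is the only interesting case, since otherwise $E$ is itself finite there --- one has $\nu_x^n(\{\gamma_i(x)\})\le 1/|[x]_{E_n}|\to 0$ for each fixed $i$, so the \emph{only} weak-$*$ subsequential limit of your counting fields is the zero field, which is not a mean. Your assertion that ``positivity, normalization, finite additivity and measurability are all closed\dots conditions preserved under weak-$*$ limits'' is therefore wrong for normalization in the topology you chose, and the compactness argument as set up produces nothing.

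The standard repair pairs the fields against $\ell^\infty$-valued, not $c_0$-valued, test functions: one works in the dual of $L^1(\mu;\ell^\infty)$, where testing against the constant sequence $(1,1,\dots)$ does see total mass, so normalization survives the limit. But $\ell^\infty$ is nonseparable, so you lose metrizability (subnets instead of subsequences --- harmless) and, more seriously, the limiting field is only weak-$*$ measurable as a map into $(\ell^\infty)^*$, which does not obviously yield the measurability clause in the paper's definition (that $x\mapsto\nu_x(A_x)$ be $\mu$-measurable for every measurable $A\subset\mathbf{R}\times\mathbf{R}$); rigorous treatments recover this with a lifting theorem \`{a} la Ionescu-Tulcea, or by replacing the ultralimit with a medial limit in the sense of Mokobodzki, which is universally measurable by construction (at the cost of extra set-theoretic hypotheses). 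Two smaller issues: your invariance verification claims the transported fields ``agree for all large $n$ on the relevant measurable set,'' but the threshold $n$ depends on the pair $(x,y)$, so you need a dominated-convergence argument at the level of the pairings rather than eventual equality on a set; and the opening reduction to a probability measure needs $\mu$ to be $\sigma$-finite, which the theorem's phrase ``any borel measure'' does not literally guarantee (though it is harmless for the Lebesgue measure actually used in the paper). In short: the skeleton (finite sub-relations, counting means, asymptotic invariance, limit) is the right one and matches the proof in the cited source, but the specific functional-analytic frame you chose for the limit extraction fails, and fixing it requires precisely the machinery (nonseparable duality plus liftings, or medial limits) that your outline was designed to avoid.
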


\section{Algebraic properties of groups that admit coherent actions}

The class of groups that admit coherent actions is a continuum family with a rich subgroup structure:
\begin{lem}\label{Continuum}
Let $H<\textup{Homeo}^+(\mathbf{R})$ be a group. There is a coherent group action $G<\textup{Homeo}^+(\mathbf{R})$
such that the underlying group $H$ is a subgroup of the underlying group $G$.
There are uncountably many isomorphism types of finitely generated groups that admit coherent actions on the real line (or a compact interval).
\end{lem}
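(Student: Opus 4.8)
The statement has two parts: every $H<\Homeo^+(\mathbf{R})$ embeds in some group admitting a coherent action, and there are continuum many isomorphism types of finitely generated such groups. The plan is to treat them in turn, bootstrapping the second from the first.

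For the embedding, I would first fix a homeomorphism $\theta\colon \mathbf{R}\to(0,1)$, replace $H$ by the conjugate group $H^{\theta}=\theta^{-1}H\theta$, and extend each element by the identity on $\mathbf{R}\setminus(0,1)$. This gives an injective homomorphism $H\to \Homeo^+_C(\mathbf{R})$ whose image $H'\cong H$ consists of homeomorphisms supported in $[0,1]$. Now set $G=\langle H',F\rangle$, with $F$ acting as in Example \ref{F1}. This is exactly the situation covered by the example concerning $\Homeo^+_C(\mathbf{R})$ in Subsection \ref{Examples}, so $G$ is coherent; for completeness I would verify the four conditions directly. Minimality holds because $F$ already acts minimally on $\mathbf{R}$ and enlarging the group only enlarges orbits. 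The germs at $\pm\infty$ are unaffected by the compactly supported elements of $H'$, so they coincide with the germs of $F$, which are cyclic (generated by a parabolic translation) and hence solvable. Finally $F$ itself supplies the asymmetric elements required by conditions (3) and (4), namely an element equal to the identity on $(-\infty,r)$ that moves every point of $(r,\infty)$, together with its mirror image. Since $H\cong H'\le G$, this proves the first assertion.

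For the second assertion I would invoke the known existence of a continuum-sized family $\{H_\alpha\}$ of pairwise non-isomorphic finitely generated groups acting faithfully on $\mathbf{R}$ by homeomorphisms (equivalently, finitely generated left-orderable groups). Applying the construction above to each $H_\alpha$ produces coherent actions $G_\alpha=\langle H_\alpha',F\rangle$, and since $F$ and each $H_\alpha$ are finitely generated, so is each $G_\alpha$. Thus every $G_\alpha$ is a finitely generated group admitting a coherent action.

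It remains to extract continuum many isomorphism types from $\{G_\alpha\}$, and this is where the main difficulty lies. A naive cardinality argument is insufficient: were only countably many isomorphism types to occur, some single finitely generated group would receive continuum many of the non-isomorphic $H_\alpha$ as subgroups, which is not in itself contradictory. The resolution I propose is to exploit rigidity: by Theorem \ref{rigidity} any abstract isomorphism between groups in $\mathcal{C}$ is induced by a topological conjugacy, so the isomorphism types occurring among the groups of germs at fixed points (Definition \ref{GOgerms}) are \emph{abstract} invariants of a group in $\mathcal{C}$, not merely of a chosen action. I would therefore select the family $\{H_\alpha\}$ so that the resulting $G_\alpha$ carry pairwise distinguishable germ data --- for instance by arranging a distinguished interior fixed point whose (necessarily left-orderable) group of germs ranges over continuum many non-isomorphic groups, which interior germs are free to do since condition (2) constrains only the germs at $\pm\infty$. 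Rigidity then forces $G_\alpha\not\cong G_\beta$ whenever their germ-group families differ, giving continuum many isomorphism types. The technical heart of the argument is thus the explicit realization of continuum many distinct germ-type invariants by finitely generated coherent actions; as a fallback supply of abstract invariants, the structure theorem (Theorem \ref{structure}), which pins down the derived series and the simple core $G^{(n)}$ of a group in $\mathcal{C}$, could be used to separate isomorphism types instead.
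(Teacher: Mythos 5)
Your first assertion is proved exactly as in the paper: the paper likewise takes a faithful action of $H$ supported on $[0,1]$, adjoins a coherent action (e.g.\ the copy of $F$ from Example \ref{F1}), and observes that $\langle H', F\rangle$ is coherent; your verification of the four conditions (germs at $\pm\infty$ unchanged by the compactly supported elements, minimality inherited from $F$, and $F$ supplying the type-(3),(4) elements) is a correct filling-in of what the paper leaves implicit.

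The gap is in the second assertion, and it lies precisely where you dismiss the counting argument. You claim that if only countably many isomorphism types occurred among finitely generated coherent groups, then some single finitely generated group would contain continuum many of the pairwise non-isomorphic $H_\alpha$, ``which is not in itself contradictory.'' It \emph{is} contradictory: a finitely generated group is countable, and a countable group has only countably many finitely generated subgroups, since each such subgroup is determined by a finite tuple of elements and there are only countably many finite tuples. In particular a fixed countable group contains at most countably many pairwise non-isomorphic $2$-generated subgroups. So the naive argument goes through: by \cite{KKL} there are uncountably many isomorphism types of $2$-generated subgroups $H_\alpha$ of $\textup{Homeo}^+(\mathbf{R})$; each embeds by your first part into the finitely generated group $G_\alpha=\langle H_\alpha', F\rangle$ admitting a coherent action; if only countably many isomorphism types occurred among the $G_\alpha$, then only countably many types could occur among the $H_\alpha$, a contradiction. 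This is exactly the ``straightforward'' conclusion the paper draws after citing \cite{KKL}. Your substitute route is not a proof as written: you explicitly defer its ``technical heart,'' namely realizing continuum many pairwise distinguishable germ-group invariants by finitely generated coherent actions, and that realization problem is essentially as hard as the statement being proved (the fallback invariants from Theorem \ref{structure} --- the solvability bound $n$ and the simple core $G^{(n)}$ --- would also require separating continuum many types of simple groups, which you do not do). While the rigidity observation, that by Theorem \ref{rigidity} germ data of a coherent action is an abstract invariant of a group in $\mathcal{C}$, is correct and potentially useful elsewhere, here it replaces a one-line valid argument with an incomplete one; the step you discarded is the one that closes the proof.
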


\begin{proof}
Consider a faithful action of $H$ by homeomorphisms of the unit interval $[0,1]\subset \mathbf{R}$, so that each element fixes each point in the complement of this interval.
Let $G_1<\textup{Homeo}^+(\mathbf{R})$ be any coherent group action (if desired, the underlying group can be chosen to be finitely generated. For instance, take one of the actions of $F$ described in examples \ref{F1} and \ref{F2}.)
Then $\langle G_1, H\rangle$ is the required coherent action.
Recall that in \cite{KKL} it was shown that there are uncountably many isomorphism types of $2$-generated groups that admit a faithful action on the real line by homeomorphisms.
Hence the final conclusion of the Lemma is straightforward.
\end{proof}

The main goal of this section is to prove Theorem \ref{structure}.\\

\begin{proof}[Proof of theorem \ref{structure} part (1)]
Let $f,g\in G$ be the elements as prescribed by parts $(3),(4)$ of the definition of coherent.
Moreover, by replacing $f,g$ with their inverses if needed, assume that $f,g$ are increasing on neighborhoods of $\infty,-\infty$ respectively.
Let $$r_1=sup\{r\in \mathbf{R}\mid r\cdot f=r\}\qquad r_2=inf(Supp(f))$$
Clearly, $r_1,r_2$ are fixed by $f$, $f$ is increasing on $(r_1,\infty)$ and the identity on $(-\infty,r_2)$.

Let $$p_1=inf\{r\in \mathbf{R}\mid r\cdot g=r\}$$
Using minimality, we find an $h\in G$ such that $p_1\cdot h\in (r_1,\infty)$.
Consider the element $g_1=h^{-1} g h$.
Let $s=sup(Supp(g_1))$.
Note that $g_1$ is the identity on $(s,\infty)$.\\

{\bf Claim}: There are $m,n\in \mathbf{N}$ such that $\langle g_1^n, f^m\rangle \cong F$.\\

{\bf Proof}: There is an $n\in \mathbf{N}$ such that $r_2\cdot g_1^n>r_1$.
Let $r_2\cdot g_1^n=r_3$.
Moreover, there is an $m\in \mathbf{N}$ such that $r_3\cdot f^m>s$.
It follows that the group generated by $g_1^n,f^m$ satisfies the following relations:
\begin{enumerate}
\item $[h^{-1}f^m h,g_1^n]=1$.
\item $[h^{-2}f^m h^2,g_1^n]=1$.
\end{enumerate}
where $h=g_1^nf^m$.
Since the presentation $$\langle a,b \mid [(ab)^{-1}b (ab), a]=[(ab)^{-2} b (ab)^{2},a]=1\rangle$$ is a presentation for $F$, it means that the group generated by $g_1^n,f^m$
is a quotient of $F$.
Since the group is clearly non abelian, and since every proper quotient of $F$ is abelian, the group is isomorphic to $F$.
\end{proof} 

We shall need the following consequence of Theorem \ref{Higman}:

\begin{lem}\label{HigmanCor}
Let $G<\textup{Homeo}^+(\mathbf{R})$ be a group action such that:
\begin{enumerate}
\item For each element $g\in G$ there is a compact interval $I_g\subset \mathbf{R}$ such that $g$ fixes each point in $\mathbf{R}\setminus I_g$.
\item For each pair of compact intervals $J_1,J_2\subset \mathbf{R}$, there is an element $g\in G$ such that $J_1\cdot g\subset J_2$.
\end{enumerate}
Then $G'$ is simple.
\end{lem}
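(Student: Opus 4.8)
The plan is to apply Higman's simplicity criterion (Theorem \ref{Higman}) directly to $\Gamma = G$, viewing $G$ as a group of bijections of $\mathbf{R}$ whose moved sets $D(g) = \{x \in \mathbf{R} \mid x\cdot g \neq x\}$ are, by hypothesis (1), contained in the compact intervals $I_g$. Fix nontrivial $\alpha,\beta,\gamma \in G$ and set $S = D(\alpha)\cup D(\beta)$. Since $D(\alpha)\subseteq I_\alpha$ and $D(\beta)\subseteq I_\beta$, the set $S$ is bounded, hence contained in some compact interval $J_1$. The goal is then to produce $\rho \in G$ witnessing $\gamma(\rho(S))\cap \rho(S) = \varnothing$.

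The heart of the argument is to locate a compact interval $J_2$ that $\gamma$ moves entirely off itself, i.e.\ $J_2\cdot \gamma \cap J_2 = \varnothing$. Since $\gamma \neq 1$, there is a point $x_0$ with $x_0\cdot \gamma \neq x_0$; say $x_0\cdot\gamma > x_0$, the other case being symmetric. Because $\gamma$ is orientation preserving and thus increasing, I choose any $y_2$ with $x_0 < y_2 < x_0\cdot\gamma$ and set $J_2 = [x_0,y_2]$. Then $J_2\cdot\gamma = [x_0\cdot\gamma,\, y_2\cdot\gamma]$ lies strictly to the right of $y_2$, so $J_2\cdot\gamma \cap J_2 = \varnothing$, and $J_2$ is nondegenerate precisely because $x_0\cdot\gamma > x_0$ leaves room to pick $y_2$.

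With $J_1 \supseteq S$ and the displacement interval $J_2$ in hand, hypothesis (2) supplies $\rho \in G$ with $J_1\cdot\rho \subseteq J_2$, so that $\rho(S) = S\cdot\rho \subseteq J_2$. Applying $\gamma$, we get $\gamma(\rho(S)) = (S\cdot\rho)\cdot\gamma \subseteq J_2\cdot\gamma$, which is disjoint from $J_2$ and in particular from $\rho(S)\subseteq J_2$. Hence $\gamma(\rho(S))\cap\rho(S) = \varnothing$. As $\alpha,\beta,\gamma$ were arbitrary nontrivial elements, Higman's hypothesis is verified, and Theorem \ref{Higman} gives that $G' = \Gamma'$ is simple.

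The only genuine content is the construction of $J_2$; the remaining steps merely translate the compact-support hypothesis (1) into the boundedness of $S$ and feed the pair $(J_1,J_2)$ into the transitivity hypothesis (2). The main point to keep straight is the left/right action convention: here $\rho(S)$ denotes the image $S\cdot\rho$ and $\gamma(\rho(S))$ the image $(S\cdot\rho)\cdot\gamma$, so the disjointness required by the criterion matches exactly the disjointness $J_2\cdot\gamma\cap J_2 = \varnothing$ established above. I expect the only (minor) obstacle to be this careful matching of conventions together with the verification that $J_2$ is genuinely nondegenerate.
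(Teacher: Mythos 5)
Your proof is correct and follows essentially the same route as the paper: bound $S=D(\alpha)\cup D(\beta)$ in a compact interval $J_1$, find a compact interval $J_2$ with $J_2\cdot\gamma\cap J_2=\varnothing$, and feed the pair into hypothesis (2) to verify Higman's criterion. The only difference is that the paper simply asserts the existence of $J_2$ while you construct it explicitly (correctly, using that $\gamma$ is increasing), so your write-up is if anything slightly more complete.
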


\begin{proof}
Let $\alpha, \beta, \gamma\in G\setminus \{1_{G}\}$.
Let $J_1$ be a compact interval that contains as a subset $S=D(\alpha)\cup D(\beta)$.
We choose a compact interval $J_2$ such that $\gamma(J_2)\cap J_2=\emptyset$.
Using the hypothesis, we find an element $g$ such that $J_1\cdot g\subset J_2$.
By a direct application of Theorem \ref{Higman}, we are done.
\end{proof}

Recall that $G_c$ is the subgroup of compactly supported elements of $G$.

\begin{lem}\label{compactlysupported}
Let $G<\textup{Homeo}^+(\mathbf{R})$ be a coherent group action.
Then for each $r_1,r_2\in \mathbf{R}$ such that $r_1<r_2$, there is an element $g\in G_c$ such that $r_1\cdot g>r_2$.
\end{lem}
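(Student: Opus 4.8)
The plan is to leverage the copy of Thompson's group $F$ already produced in the proof of Theorem \ref{structure}(1), extract from it a single nontrivial compactly supported element, and then spread that element around using minimality.

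First I would recall from that proof the elements $f,g$ supplied by coherence, the conjugate $g_1=h^{-1}gh$, and the exponents $m,n$ for which $\langle g_1^n,f^m\rangle\cong F$. By construction $g_1^n$ is the identity on a neighborhood of $+\infty$ and $f^m$ is the identity on a neighborhood of $-\infty$. Consequently the commutator $c_0:=[g_1^n,f^m]$ is the identity near $+\infty$ (where $g_1^n$ is trivial) and near $-\infty$ (where $f^m$ is trivial), so $c_0$ has compact support; moreover $c_0\neq 1$ because $F$ is nonabelian. Thus $c_0\in G_c\setminus\{1\}$. Being a nontrivial orientation-preserving homeomorphism, $c_0$ (after replacing it by $c_0^{-1}$ if necessary) has a component $(p_0,q_0)$ of its support on which it strictly pushes points to the right; for every $x\in(p_0,q_0)$ the forward orbit $x\cdot c_0^k$ increases to $q_0$.

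The lemma then reduces to showing that for each $r_1$ the set $\{r_1\cdot c : c\in G_c\}$ is unbounded above, since in that case some $c\in G_c$ pushes $r_1$ past the given $r_2$. I would prove unboundedness by contradiction. Suppose $L:=\sup\{r_1\cdot c : c\in G_c\}$ is finite. By minimality the orbit of $L$ meets the open interval $(p_0,q_0)$, say $L\cdot\zeta\in(p_0,q_0)$; setting $\eta=\zeta^{-1}$, the conjugate $c_0^{\eta}=\eta^{-1}c_0\eta\in G_c$ has a right-pushing support component $(P,Q)=(p_0\eta,q_0\eta)$ with $P<L<Q$. Since $P<L$, the definition of $L$ yields some $c\in G_c$ with $z:=r_1\cdot c\in(P,L]$; as $z<Q$, the forward orbit $z\cdot(c_0^{\eta})^k$ increases to $Q>L$, so for large $k$ we obtain $r_1\cdot\bigl(c\,(c_0^{\eta})^k\bigr)>L$ with $c\,(c_0^{\eta})^k\in G_c$, contradicting the definition of $L$. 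Hence $L=+\infty$.

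The step I expect to be the main obstacle is precisely this passage from ``$c_0$ can push a point a little to the right'' to ``$G_c$ can push $r_1$ arbitrarily far'', i.e.\ ruling out that the compactly-supported orbit accumulates at a finite barrier. The point that makes the argument work is that one only needs to \emph{straddle a single point} $L$ by a $G$-translate of $(p_0,q_0)$, which density of orbits supplies directly, rather than to expand one interval onto another, which minimality alone would not guarantee. A symmetric argument, using a left-pushing component or $c_0^{-1}$, gives unboundedness below, though only the upward direction is needed here.
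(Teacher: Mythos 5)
Your proof is correct, but it takes a different route from the paper's. The paper first produces a nontrivial element of $G_c$ by taking iterated commutators, using that the germs at $\pm\infty$ are solvable while $G$ itself is not; you instead extract the single commutator $[g_1^n,f^m]$ from the pair of elements built in the proof of Theorem \ref{structure}(1), whose compact support follows from the support structure of $g_1^n$ and $f^m$ alone (your verification near each end is slightly glossed --- one needs $x$ far enough out that \emph{both} $x$ and its image under the other generator lie in the trivial region --- but this is routine). The more substantive divergence is in the propagation step: the paper conjugates the compactly supported element to put a support component through \emph{every} point of $[r_1,r_2]$, extracts a finite subcover by compactness, and composes an explicit word $W$ that chains $r_1$ rightward past $r_2$; you instead run a supremum--barrier argument, setting $L=\sup\{r_1\cdot c : c\in G_c\}$ and using minimality only once, to straddle the single putative barrier $L$ with a conjugated right-pushing support component, whose iterates then push past $L$. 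Your version trades the constructive covering word for a contradiction, needs density of orbits at only one point rather than along the whole interval, and as a byproduct shows the $G_c$-orbit of any point is unbounded above; it also never invokes solvability of the germs, so it establishes the lemma under the milder hypotheses (minimality plus the elements of conditions (3) and (4)), in the spirit of the weakened rigidity corollary in the paper. The paper's covering argument, in exchange, is direct and exhibits the element $g$ explicitly as a product of powers of finitely many conjugates.
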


\begin{proof}
Since $G$ is nonsolvable, and since the groups of germs at $\pm \infty$ are solvable, we can produce compactly supported non trivial elements in $G$
by considering long commutators of elements in $G$.
Let $f$ be such an element, and let $U\subset \mathbf{R}$ be a component of support of $f$.
For each $r\in [r_1,r_2]$, using minimality, we find an element $g_r\in G$ such that $r\cdot g_r\in U$.
It follows that $f_r=g_r f g_r^{-1}$ is a compactly supported element with a component of support $U_r$ that contains $r$.
These components of support form an open covering of $[r_1,r_2]$.
There is a finite subcovering, using compactness.
Let $f_{s_1},...,f_{s_n}$ be the respective elements with components of support $U_{s_1},...,U_{s_n}$ respectively covering $[r_1,r_2]$.
One can produce a word $W$ in these letters and their inverses such that $r_1\cdot W>r_2$.
Clearly, $W\in G_c$.  
\end{proof}

\begin{lem}\label{IntervalTransitivity}
Let $U,V$ be compact intervals in $\mathbf{R}$. Then there is a $g\in G_c$ such that $U\cdot g\subset V$.
\end{lem}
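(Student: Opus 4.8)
The plan is to first slide $U$ to the left of $V$, and then contract it into $V$ using the dynamics of a single compactly supported element whose component of support is a long interval terminating inside $V$.

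Write $V=[c,d]$, so that its interior is $(c,d)$. First I would reduce to mapping $U$ into $(c,d)$. Choose $c_0<c$ with also $c_0<\min U$, and apply Lemma \ref{compactlysupported} with $r_1=c_0$ and $r_2=\max U$ to obtain $\rho\in G_c$ with $c_0\cdot\rho>\max U$. Since $g_0:=\rho^{-1}\in G_c$ is orientation preserving and $\max U<c_0\cdot\rho$, every point of $U$ is carried strictly to the left of $c_0$. Hence $U':=U\cdot g_0=[a',b']$ lies entirely to the left of $V$, i.e. $b'<c$. This is where the ``leftward'' mobility of $G_c$ is extracted from the rightward statement of Lemma \ref{compactlysupported} by inverting.

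Next I would manufacture a \emph{contractor}: an element $f\in G_c$ possessing a component of support $(P,Q)$ on which $f$ is increasing, with $Q\in(c,d)$ and $P<a'$. Granting this, since $f$ has no fixed point in $(P,Q)$ and moves points rightward there, for each $x\in(P,Q)$ the sequence $x\cdot f^n$ increases monotonically to the fixed point $Q$. As $U'=[a',b']\subset(P,Q)$ (because $P<a'$ and $b'<c<Q$), the left endpoint satisfies $a'\cdot f^n\to Q$; choosing $\epsilon$ with $Q-\epsilon>c$, for all large $n$ we get $U'\cdot f^n=[a'\cdot f^n,\,b'\cdot f^n]\subset(Q-\epsilon,Q)\subset(c,d)$. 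Then $g:=g_0f^n\in G_c$ satisfies $U\cdot g\subset V$, as required.

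The main obstacle is the construction of the contractor $f$, namely a compactly supported element with a single long component of support whose right endpoint lies in $(c,d)$ and whose left endpoint lies to the left of $a'$. I would build it by chaining overlapping ``bumps''. Starting from a nontrivial element of $G_c$ (which exists by the argument in the proof of Lemma \ref{compactlysupported}), fix one of its components of support on which it is increasing and consider conjugates of it, each of which is again increasing on its (translated) component of support. By minimality the right endpoints of these components of support are dense, so I can select one conjugate whose component of support has right endpoint exactly in $(c,d)$; this will be the rightmost bump and will determine $Q$. I then adjoin finitely many further conjugates, positioned by minimality, whose components of support overlap in a chain extending to the left past $a'$, all chosen with right endpoints strictly below $Q$ so that near $Q^-$ only the rightmost bump is active. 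Taking the product of these increasing bumps in left-to-right order gives an element of $G_c$ that fixes the complement of the union of their supports and moves every point of that union strictly rightward; hence this union is a single component of support $(P,Q)$ with $Q\in(c,d)$ and $P<a'$. The fact that such an overlapping chain of increasing bumps composes to a single bump over the whole union, with no interior fixed points introduced, is the technical heart of the argument and is a standard chain-group manipulation (cf. \cite{KKL}).
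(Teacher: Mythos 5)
Your reduction step is fine, and your overall scheme (slide $U$, then contract toward an attracting endpoint placed inside $V$) is the same in spirit as the paper's. The genuine gap is in the construction of the contractor $f$, for two reasons. First, a conjugate of a nontrivial element of $G_c$ is not a bump: besides the distinguished increasing component, it may have several other components of support, which land wherever the conjugator sends them, may be decreasing, and may meet a neighborhood of $Q$ or even stretch past $d$. So the product of your chain need not fix the complement of the union of the chain intervals, the claim that ``near $Q^-$ only the rightmost bump is active'' can fail, and the component of support of the product containing $U'$ may have right endpoint other than $Q$ --- possibly outside $(c,d)$ --- which destroys the convergence target of the iteration. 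The appeal to \cite{KKL} does not apply: the chain-group composition lemma assumes generators whose supports are \emph{exactly} single intervals, and in a general coherent group no element need have single-interval support. Second, even granting single bumps, minimality only lets you place \emph{one} point of a conjugated component into a prescribed open set; your chain requires simultaneous two-sided control (every right endpoint strictly below $Q$, overlaps, left end past $a'$). That degree of control is essentially Lemma \ref{finaltrans}, which the paper proves \emph{after}, and \emph{using}, Lemma \ref{IntervalTransitivity}, so invoking it here would be circular.

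The paper sidesteps exactly this difficulty by using an ``infinite bump'': the coherence element $f_1$, trivial near $-\infty$ and with component of support $(r_2,\infty)$, has only one finite endpoint, so one-point control via minimality suffices --- conjugating by $f_2$ with $r_2\cdot f_2\in int(V)$ aims that endpoint into $V$ with no possibility of overshooting on the right. It then pushes $U$ to the right of $r_2\cdot f_2$ using Lemma \ref{compactlysupported} (no leftward slide is needed) and contracts with $f_3^{-n}$ toward $r_2\cdot f_2\in int(V)$. Since $f_3\notin G_c$, it compactifies at the end: taking $f_5\in G_c$ with $inf(Supp(f_3))\cdot f_5>sup(U\cdot f_4)$, the commutator $g_1=f_3f_5^{-1}f_3^{-1}f_5$ lies in $G_c$ and agrees with $f_3$ (hence $g_1^{-1}$ with $f_3^{-1}$) on the compact interval where the contraction takes place, which is invariant under the iteration; then $g=f_4g_1^{-n}$ works. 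Note that $g_1$ may still have extra components of support --- the point is that the argument never needs a single-bump element, only agreement with $f_3^{-1}$ on an invariant interval. Your proposal can be repaired by replacing your finite contractor with this infinite-bump-plus-commutator device.
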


\begin{proof}
From coherence, let $f_1\in G$ be an element such that $f_1$ is the identity on $(-\infty, r_1)$ and has a component of support $(r_2,\infty)$ on which it is increasing.
Using minimality, let $f_2\in G$ be such that $r_2\cdot f_2\in int(V)$.
Consider the element $f_3= f_2^{-1} f_1 f_2$.
Note that $f_3$ has a component of support $(r_2\cdot f_2,\infty)$ and a trivial germ at $-\infty$.

Using Lemma \ref{compactlysupported}, we obtain an element $f_4\in G_c$ such that $inf(U)\cdot f_4>r_2\cdot f_2$.
In particular, $U\cdot f_4\subset (r_2\cdot f_2,\infty)$.
It follows that there is an $n\in \mathbf{N}$ such that $$(U\cdot f_4)\cdot f_3^{-n}\subset V$$
We shall produce an element of $G_c$ which agrees with $f_3^{-n}$ on $U\cdot f_4$.

Again using Lemma \ref{compactlysupported}, we find an element $f_5\in G_c$ such that $$inf(Supp(f_3))\cdot f_5> sup(U\cdot f_4)$$
Let $$g_1=f_3f_5^{-1} f_3^{-1} f_5$$
Clearly, $g_1\in G_c$ and moreover $g_1,f_3$ agree on the interval $(r_2\cdot f_2, sup(U\cdot f_4)]$.
In particular, $U\cdot g_1^{-n}\subset V$.
So the required element is $g=f_4 g_1^{-n}$.
\end{proof}

\begin{proof}[Proof of theorem \ref{structure} (2) and (3)]
Using Lemma \ref{IntervalTransitivity} and \ref{HigmanCor} we conclude that $G_c'$ is simple.
Since $G$ has solvable germs at $\pm \infty$, it follows that for some $n\in \mathbf{N}$,
$G^{(n)}\subseteq G_c$ and hence $G^{(n+1)}\subseteq G_c'$, and since the latter is simple, $G^{(n+1)}=G_c'$.
Since each group $G^{(1)},...,G^{(n+1)}$ has a trivial centraliser in $G$, it follows that every non trivial normal subgroup of $G$ must contain $G^{(n+1)}$ and hence the respective quotient must be solvable of degree at most $n+1$. 
\end{proof}

We end this section by providing an elementary general construction of coherent actions where the underlying group has solvable quotients of desired length of solvability.

\begin{lem}\label{solvability}
For each $n\in \mathbf{N}$, there is a coherent group action $G<\textup{Homeo}^+(\mathbf{R})$ such that the underlying group admits quotients of solvable length $n$.
\end{lem}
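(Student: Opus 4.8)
The plan is to produce the required solvable quotient as a group of germs at $+\infty$. For any $G<\textup{Homeo}^+(\mathbf{R})$ the subgroup $G_{\infty}^{-}$ of elements with trivial germ at $+\infty$ is normal, and the quotient $G/G_{\infty}^{-}$ is exactly the group of germs at $+\infty$. So it suffices to build a coherent action whose germ group at $+\infty$ is solvable of derived length exactly $n$: coherence already forces this germ group to be solvable, and the work is to pin its length at $n$ while keeping minimality and conditions $(3)$–$(4)$.

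First I would fix a countable solvable group $\Gamma$ of derived length exactly $n$ that embeds in $\textup{Homeo}^+([0,1])$. The iterated wreath product $\Gamma=\mathbf{Z}\wr(\mathbf{Z}\wr\cdots\wr\mathbf{Z})$ with $n$ factors works: it is torsion-free and left-orderable (left-orderability passes through the extensions $A^{(B)}\rtimes B$), hence acts faithfully on $\mathbf{R}$ and so on $[0,1]$ fixing the endpoints, and $A\wr B$ has derived length $\mathrm{dl}(B)+1$ when $A$ is nontrivial abelian, giving $\mathrm{dl}(\Gamma)=n$. Fixing a faithful $\rho\colon\Gamma\to\textup{Homeo}^+([0,1])$, I then spread this action periodically towards $+\infty$: let $\tilde\rho(\gamma)\in\textup{Homeo}^+(\mathbf{R})$ act by a copy of $\rho(\gamma)$ on each tile $[k,k+1]$ with $k\in\mathbf{N}$ and be the identity on $(-\infty,0]$. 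Since $\rho(\gamma)$ fixes the endpoints, $\tilde\rho$ is a well-defined faithful homomorphism, each $\tilde\rho(\gamma)$ has trivial germ at $-\infty$, and the germ-at-$+\infty$ map is injective on $\tilde\rho(\Gamma)$ (distinct $\gamma,\delta$ disagree on arbitrarily large tiles). Finally I take the standard coherent copy $F<\textup{Homeo}^+(\mathbf{R})$ from Example \ref{F1} and set $G=\langle F,\tilde\rho(\Gamma)\rangle$.

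I claim $G$ is coherent with germ group $\cong\Gamma\times\mathbf{Z}$ at $+\infty$. Minimality and conditions $(3)$–$(4)$ are inherited from $F$, since $G\supseteq F$. For condition $(2)$, every germ at $+\infty$ of an element of $F$ is a power of the translation $\tau\colon t\mapsto t+1$ (the stabilizer of $\infty$ in $PSL_2(\mathbf{Z})$ is $\langle\tau\rangle$), while the germs of $\tilde\rho(\Gamma)$ form a copy of $\Gamma$; so the germ group at $+\infty$ is generated by $\Gamma$ and $\tau$. The key point is that $\tau$ is \emph{central} there: because the tiling has period $1$, conjugation by $\tau$ merely reindexes the tiles, so $\tau^{-1}\tilde\rho(\gamma)\tau$ and $\tilde\rho(\gamma)$ agree on all of $[0,\infty)$, whence $[\tau,\tilde\rho(\gamma)]$ has trivial germ at $+\infty$. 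Thus the germ group at $+\infty$ is the product of $\Gamma$ with the central subgroup $\langle\tau\rangle\cong\mathbf{Z}$, solvable of derived length $\max(n,1)=n$, while the germ group at $-\infty$ is just $\langle\tau\rangle\cong\mathbf{Z}$. Hence $G$ is coherent and $G/G_{\infty}^{-}$ is solvable of length exactly $n$.

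The routine points (existence of $\Gamma$ with the stated derived length, that $\tilde\rho$ is a faithful homomorphism, and inheritance of minimality and of the special elements from $F$) present no difficulty. The one genuinely delicate step, and the heart of the construction, is controlling the germ group of the combined group: a priori the translation germs contributed by $F$ could interact with the germs of $\tilde\rho(\Gamma)$ to generate a group of larger derived length, or even a non-solvable one. Choosing the spreading to be periodic of period equal to the translation length of $F$'s germ is exactly what forces $\tau$ to become central at the level of germs, and this is what keeps the combined germ group solvable and fixes its derived length at $n$.
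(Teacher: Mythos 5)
Your proof is correct and follows essentially the same route as the paper: take a left-orderable group of derived length $n$ acting on $[0,1]$, spread that action periodically along $\mathbf{R}$ so that it commutes with the integer translation at the level of germs, adjoin the coherent copy of $F$ from Example \ref{F1}, and extract the quotient as the germ group $\mathbf{Z}\times\Gamma$ at $\infty$. The only differences are cosmetic: you tile only $[0,\infty)$ rather than all of $\mathbf{R}$ and you exhibit $\Gamma$ explicitly as an iterated wreath product, where the paper simply posits a left-orderable group of solvable length $n$.
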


\begin{proof}
Let $H$ be a left orderable group of solvable length $n$.
Consider a dynamical realisation of $H$ as homeomorphisms of $[0,1]$,
and consider another dynamical realisation of $H$ as homeomorphisms of
$\mathbf{R}$ where the restriction of the action on each interval $[n,n+1]$ (for $n\in \mathbf{Z}$)
is the conjugate by the integer translation $x\to x+n$ of the dynamical realisation defined on $[0,1]$.
We call this group action $H_1<\textup{Homeo}^+(\mathbf{R})$ (where the underlying group $H_1\cong H$.)
Note that by construction, elements of $H_1$ commute with integer translations.

Now consider the group action $G<\textup{Homeo}^+(\mathbf{R})$ which is generated by $H_1$ together with 
$F<\textup{Homeo}^+(\mathbf{R})$ where $F$ is the coherent action of Thompson's group defined in example \ref{F1}.
Since the germs at $\pm \infty$ of this action of Thompson's group are integer translations,
the germs at $\pm \infty$ of $G$ are isomorphic to $\mathbf{Z}\times H$.
In particular, $G$ is a coherent action.
The homomorphism to the germ at $\infty$ (or $-\infty$) then provides the required quotient.
\end{proof}

\section{Rigidity of coherent actions}

The goal of this section is to prove Theorem \ref{rigidity}.
The proof of this shall involve an application of Rubin's theorem, and the main technical Proposition necessary is the following.

\begin{prop}\label{locallydense}
Let $I$ be a compact interval or $I=\mathbf{R}$.
Every coherent group action $G<\textup{Homeo}^+(I)$ is locally minimal on $int(I)$.
\end{prop}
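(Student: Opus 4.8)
Recall that local minimality requires: given open intervals $U\subset V$ in $int(I)$ and a point $x\in V$, I must produce $g\in G$ fixing $I\setminus V$ pointwise and moving $x$ into $U$. The plan is to manufacture such a $g$ as a commutator or product of conjugates of the compactly supported elements whose existence and transitivity properties were already established in Lemmas \ref{compactlysupported} and \ref{IntervalTransitivity}, but now controlling the support so that it stays inside $V$.

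**The approach.** First I would recall that by Theorem \ref{structure}(1) the group $G$ is nonsolvable while its germs at $\pm\infty$ are solvable, so $G_c$ (the compactly supported elements) is nontrivial; in fact Lemma \ref{compactlysupported} and Lemma \ref{IntervalTransitivity} show $G_c$ acts with strong transitivity properties on compact intervals. The key observation I want to exploit is a \emph{localization} principle: if $f\in G_c$ has a component of support $W$, and $V$ is any open interval, then by Lemma \ref{IntervalTransitivity} I can find $k\in G_c$ conjugating so that the support of $k^{-1}fk$ sits inside a prescribed region. So the strategy is to first produce, using minimality together with the coherent elements from conditions (3),(4), an element with a single component of support contained in $V$ that moves points across $V$; then to show that by taking a suitable power or product I can push the specific point $x$ into the sub-interval $U$ while the whole motion stays supported in $V$.

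**Key steps in order.** Step one: shrink to $V$. Pick a component-of-support element $f\in G_c$ (guaranteed nontrivial since $G$ is nonsolvable) and, via Lemma \ref{IntervalTransitivity}, conjugate it by some $k\in G_c$ so that $\supp(k^{-1}fk)\subset V$ and the component of support containing the image straddles both $x$ and a point of $U$. Here I use minimality to ensure $U$ meets the orbit closure, so that there is room inside $V$ to act. Step two: arrange the dynamics so the element is increasing on a sub-interval of $V$ containing $x$ with $U$ to its left (or right). Step three: since the element is a homeomorphism increasing on an interval $(a,b)\subset V$ with $x\in(a,b)$ and $U\cap(a,b)\neq\varnothing$, some power $g=(k^{-1}fk)^{\pm m}$ carries $x$ into $U$; because $\supp(g)\subset V$, automatically $g$ fixes $I\setminus V$ pointwise. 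This verifies both required conditions simultaneously.

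**Main obstacle.** The delicate point is Step one/two: producing an element whose \emph{single} component of support lies inside $V$ \emph{and} which is monotone in the correct direction across the whole gap between $x$ and $U$, rather than merely having \emph{some} compactly supported element available. A compactly supported element may have several components of support or may not move $x$ far enough toward $U$ in one monotone stretch. I expect to resolve this by combining Lemma \ref{compactlysupported} (which gives elements of $G_c$ pushing any point past any other) with a conjugation that drags the relevant component of support entirely inside $V$ via Lemma \ref{IntervalTransitivity}, then replacing the element by a commutator of the form $g_1=f_3 f_5^{-1}f_3^{-1}f_5$ exactly as in the proof of Lemma \ref{IntervalTransitivity} to excise the unwanted tail and leave a clean compactly-supported element that agrees with the desired motion on the interval between $x$ and $U$. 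The minimality hypothesis is what guarantees $U$ is non-negligible for the orbit, ensuring such an element can be steered to land $x$ inside $U$.
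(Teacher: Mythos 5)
Your overall architecture matches the paper's: conjugate a compactly supported element so that a designated component of its support lies in $V$, straddles $x$, and has an endpoint in $U$, then take a power of the conjugate to push $x$ into $U$. But there is a genuine gap exactly at the step you yourself flag as delicate, and the repair you sketch does not close it. To position the conjugated element you need \emph{simultaneous} control of two intervals: the paper takes a compactly supported $f$ with support in $(r_1,r_4)$ and a component of support $(r_2,r_3)$, and then needs a single $g\in G$ with $[r_1,r_2]\cdot g\subset U$ \emph{and} $[r_3,r_4]\cdot g\subset (x,x+\delta)$; only this double condition forces $\mathrm{supp}(g^{-1}fg)\subset V$ while the component $(r_2\cdot g,\, r_3\cdot g)$ contains $x$ and has its left endpoint inside $U$. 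Lemma \ref{IntervalTransitivity} is one-interval transitivity: it can put the whole support of $k^{-1}fk$ inside $V$, but it gives no control whatsoever over where the designated component lands relative to $x$ and $U$ --- your Step one simply asserts the straddling property, which is the entire content that needs proof. The paper has to manufacture the required two-interval transitivity as a separate chain of lemmas (Lemma \ref{trans}, then Lemma \ref{transepsilon}, culminating in Lemma \ref{finaltrans}), each proved from coherence; your proposal contains no substitute for this chain, and without it the construction cannot be carried out.

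Your fallback --- the truncation commutator $g_1=f_3f_5^{-1}f_3^{-1}f_5$ borrowed from the proof of Lemma \ref{IntervalTransitivity} --- does not rescue the step either. That trick produces a compactly supported element agreeing with $f_3$ on a prescribed compact interval, but its support is \emph{not} contained in $V$: it contains the translated copy $\mathrm{supp}(f_3)\cdot f_5$, which lies far to the right and in general outside $V$, together with whatever components of support $f_3$ has below the one you care about. Such an element fails the requirement of fixing $I\setminus V$ pointwise, and conjugating it afterwards into $V$ via Lemma \ref{IntervalTransitivity} destroys the positional control relative to $x$ and $U$ all over again. So the missing idea is precisely Lemma \ref{finaltrans}. (A minor point in your favor: your worry about arranging monotonicity ``in the correct direction'' is vacuous --- once the component of support contains $x$ and has an endpoint in the open set $U$, the iterates $x\cdot f_1^{n}$ converge monotonically to that endpoint for the appropriate sign of $n\in\mathbf{Z}$, hence eventually enter $U$, which is why the paper allows $n\in\mathbf{Z}$ rather than $n\in\mathbf{N}$.)
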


For the rest of the section we assume that $G$ satisfies the hypothesis of the Proposition.
We shall only consider the case where $I=\mathbf{R}$.
The other case is similar.
Recall that since $G$ is not solvable, but the groups of germs of $G$ at $-\infty,+\infty$ are,
we can construct non trivial elements of $G$ with trivial germs at $\pm \infty$ using iterated commutators. 
In this section, we first prove a sequence of refinements of minimality for coherent actions,
building upon the ones we proved in the previous section.
Then we shall use these refinements to prove Proposition \ref{locallydense}.

\begin{lem}\label{trans}
Let $U,V$ be closed subintervals of $\mathbf{R}$ such that $sup(U)<inf(V)$.
For every $r\in \mathbf{R}$, there is an element $g\in G$ such that:
\begin{enumerate}
\item $g$ fixes each point in $U$.
\item $inf(V\cdot g)>r$. 
\end{enumerate}
\end{lem}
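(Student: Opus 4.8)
The plan is to take the element guaranteed by part (3) of the definition of coherence, conjugate it so that its fixed region near $-\infty$ engulfs $U$ while its increasing, fixed-point-free ray engulfs $V$, and then apply a sufficiently high power of the resulting element to carry $V$ past $r$.

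First I would fix an element $f\in G$ with trivial germ at $-\infty$ that fixes no point of some ray $(c,\infty)$; replacing $f$ by $f^{-1}$ if necessary, I may assume $f$ is increasing on $(c,\infty)$. Since $f$ has trivial germ at $-\infty$, the quantity $a=inf(supp(f))$ is finite and $f$ is the identity on $(-\infty,a)$; moreover $(c,\infty)\subseteq supp(f)$ forces $a\le c$, so $[a,c]$ is a compact interval.

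Next comes the positioning step, which I expect to be the crux. The hypothesis $sup(U)<inf(V)$ means the open interval $(sup(U),inf(V))$ is nonempty, hence contains a compact subinterval $W$. By Lemma \ref{IntervalTransitivity} there is $h\in G_c$ with $[a,c]\cdot h\subset W$; in particular $a\cdot h>sup(U)$ and $c\cdot h<inf(V)$. Set $g_0=h^{-1}fh$. Since conjugation carries the fixed-point set of $f$ onto its image under the increasing homeomorphism $h$, the map $g_0$ is the identity on $(-\infty,a\cdot h)$, an interval containing $U$ because $a\cdot h>sup(U)$; thus $g_0$ fixes $U$ pointwise. Likewise $g_0$ is increasing and fixed-point-free on $(c\cdot h,\infty)$, and this ray contains $V$ because $c\cdot h<inf(V)$.

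It remains to push $V$ to the right. On the ray $(c\cdot h,\infty)$ the map $g_0$ is an increasing homeomorphism with no fixed points, so for every $x>c\cdot h$ the forward orbit $x\cdot g_0^n$ is strictly increasing and, having no fixed point to which it could converge, tends to $+\infty$. Applying this to $x=inf(V)$ and using that $g_0^n$ is an increasing homeomorphism (so that $inf(V\cdot g_0^n)=inf(V)\cdot g_0^n$), I obtain an $n\in\mathbf{N}$ with $inf(V)\cdot g_0^n>r$. Then $g=g_0^n$ fixes $U$ pointwise and satisfies $inf(V\cdot g)>r$, as required. The essential difficulty is arranging a single element to simultaneously fix $U$ and act without fixed points on a ray covering $V$; compressing $[a,c]$ into the gap between $U$ and $V$ via Lemma \ref{IntervalTransitivity} accomplishes exactly this, after which the escape of the orbit to $+\infty$ is a routine one-dimensional dynamics argument.
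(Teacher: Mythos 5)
Your proof is correct and takes essentially the same route as the paper's: the paper likewise fixes the coherence element $f$ (identity on $(-\infty,r_1)$, increasing on $(r_2,\infty)$), uses Lemma \ref{IntervalTransitivity} to move $[r_1,r_2]$ into the gap $(\sup(U),\inf(V))$, conjugates, and takes a suitable power. Your explicit verification that the forward orbit of $\inf(V)$ escapes to $+\infty$ merely fills in what the paper dismisses with ``clearly''.
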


\begin{proof}
Let $f\in G$ be an element such that for some $r_1,r_2\in \mathbf{R}, r_1<r_2$, $f$ fixes each point in $(-\infty,r_1)$ and is increasing on $(r_2,\infty)$. 
Using Lemma \ref{IntervalTransitivity}, we find a $f_1\in G$ such that $$[r_1,r_2]\cdot f_1\subset (sup(U),inf(V))$$
Consider the element $g_1=f_1^{-1} f f_1$.
Clearly, there is an $n\in \mathbf{N}$ such that $$inf(V\cdot g_1^n)>r$$ and $$x\cdot g_1^n=x\qquad \forall x\in U$$
Therefore the required element is $g=g_1^n$.
\end{proof}

\begin{lem}\label{transepsilon}
Let $U,V,W$ be closed subintervals of $\mathbf{R}$ such that $sup(U)<inf(V)$.
For every $r\in \mathbf{R}$, there is an element $g\in G$ such that:
\begin{enumerate}
\item $U\cdot g\subset W$.
\item $V\cdot g\subset (r,\infty)$. 
\end{enumerate}
\end{lem}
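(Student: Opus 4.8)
The plan is to obtain $g$ as a product $g=g_1g_2$ of two elements furnished by the previous lemmas: a first factor that relocates $U$ into the target interval $W$, and a second factor that fixes the relocated copy of $U$ while driving $V$ past $r$. The key observation making this work is that because $g_2$ will fix the image of $U$ pointwise, composing on the right does not disturb the placement achieved in the first step.

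First I would choose a nondegenerate compact subinterval $W_0\subset W$ and invoke Lemma \ref{IntervalTransitivity} to produce an element $g_1\in G_c$ with $U\cdot g_1\subset W_0\subset W$. Writing $U_1=U\cdot g_1$ and $V_1=V\cdot g_1$, I note that $g_1\in \Homeo^+(\mathbf{R})$ is order preserving, so the hypothesis $sup(U)<inf(V)$ is inherited as $sup(U_1)<inf(V_1)$; moreover $U_1$ and $V_1$ are again closed intervals, so the pair $(U_1,V_1)$ satisfies the hypotheses of Lemma \ref{trans}.

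Next I would apply Lemma \ref{trans} to $U_1,V_1$ with the given real number $r$, obtaining an element $g_2\in G$ that fixes every point of $U_1$ and satisfies $inf(V_1\cdot g_2)>r$. Setting $g=g_1g_2$ and using that the action is on the right, I compute $U\cdot g=(U\cdot g_1)\cdot g_2=U_1\cdot g_2=U_1\subset W$, since $g_2$ fixes $U_1$ pointwise; and $V\cdot g=V_1\cdot g_2$ is a closed interval whose infimum exceeds $r$, hence $V\cdot g\subset(r,\infty)$. This is precisely the element required.

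Since the argument is a two-step composition of results already established, there is no deep obstacle; the only points demanding care are (i) that, in order to keep $U$ inside $W$, the second factor must be chosen to fix the \emph{already relocated} interval $U_1$ rather than $U$ itself, which is exactly why the order of the two factors matters, and (ii) if $W$ is unbounded one first passes to a nondegenerate compact subinterval $W_0\subset W$ before applying Lemma \ref{IntervalTransitivity}.
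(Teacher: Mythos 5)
Your proof is correct and takes essentially the same approach as the paper: both arguments compose an element from Lemma \ref{IntervalTransitivity} that relocates $U$ into $W$ with an element from Lemma \ref{trans} that fixes (a copy of) $U$ pointwise and pushes (a copy of) $V$ past $r$. The only difference is the order of the two factors — the paper sets $g=f_2f_1$, applying the Lemma \ref{trans} element first and compensating by choosing $r_1$ with $(r_1,\infty)\cdot f_1\subset (r,\infty)$, whereas you relocate first and then apply Lemma \ref{trans} to the transported intervals $U_1,V_1$ with the target $r$ directly, which is equally valid.
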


\begin{proof}
Using Lemma \ref{IntervalTransitivity}, we find an element $f_1\in G$ such that $U\cdot f_1\subset W$.
Since $f_1$ is a homeomorphism, there is an $r_1\in \mathbf{R}$ such that $(r_1,\infty)\cdot f_1\subset (r,\infty)$.
Next, using Lemma \ref{trans}, we find an element $f_2$ that fixes $U$ pointwise and maps $V$ inside $(r_1,\infty)$.
So the required element is $g=f_2f_1$.
\end{proof}

\begin{lem}\label{finaltrans}
Let $U_1,U_2,V_1,V_2$ be closed intervals in $\mathbf{R}$ such that $$sup(U_1)<inf(U_2)\qquad sup(V_1)<inf(V_2)$$
Then there is a $g\in G$ such that $U_1\cdot g\subset V_1$ and $U_2\cdot g\subset V_2$.
\end{lem}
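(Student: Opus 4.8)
The plan is to realise the desired element as a two-stage composition: first an element $g_1$ that carries $U_1$ into $V_1$ while sweeping $U_2$ far out to the right of $V_2$, and then an element $\phi$ that fixes $V_1$ pointwise and drags the (now far-right) image of $U_2$ back down into $V_2$. The reason for splitting the two tasks this way is that, once $U_1$ has been placed inside $V_1$, any further element whose support lies to the right of $V_1$ leaves that first containment untouched, so the two requirements do not interfere.

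For the first stage I would invoke Lemma \ref{transepsilon} with $U=U_1$ and $V=U_2$ (legitimate since $\sup(U_1)<\inf(U_2)$), target $W=V_1$, and threshold $r=\sup(V_2)$. This produces $g_1\in G$ with $U_1\cdot g_1\subset V_1$ and $U_2\cdot g_1\subset(\sup(V_2),\infty)$; in particular the image of $U_2$ is a compact interval lying entirely to the right of $V_2$, while the image of $U_1$ is already correctly placed.

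The second stage is the crux. Using coherence (condition (3)), fix an element $f$ that is the identity on a ray $(-\infty,r_2)$ and strictly increasing (hence moving every point to the right) on $(r_1,\infty)$, where $r_1=\sup\{x: x\cdot f=x\}$ is its largest fixed point and $r_2=\inf(\operatorname{supp}(f))$, exactly as in the proof of Theorem \ref{structure}(1). I would then conjugate $f$ so as to slide its fixed-point and support data inside $V_2$: by Lemma \ref{IntervalTransitivity} choose $\psi\in G_c$ with $[r_2,r_1]\cdot\psi\subset\mathrm{int}(V_2)$ and set $\phi=\psi^{-1}f\psi$. Since conjugation by the order-preserving $\psi$ transports supports and fixed points, $\phi$ has support $(r_2\cdot\psi,\infty)$ and is the identity on $(-\infty,r_2\cdot\psi]$; as $r_2\cdot\psi>\inf(V_2)>\sup(V_1)$, the element $\phi$ fixes $V_1$ pointwise, while $q:=r_1\cdot\psi\in\mathrm{int}(V_2)$ is the largest fixed point of $\phi$ and $\phi$ moves every point of $(q,\infty)$ strictly to the right.

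To conclude, note that $\phi$ has no fixed point in $(q,\infty)$, so for each $x>q$ the sequence $x\cdot\phi^{-n}$ decreases monotonically to $q$. Applying this to the compact interval $U_2\cdot g_1\subset(\sup(V_2),\infty)\subset(q,\infty)$ yields an $N$ with $(U_2\cdot g_1)\cdot\phi^{-N}\subset(q,\sup(V_2))\subset V_2$, while $(U_1\cdot g_1)\cdot\phi^{-N}=U_1\cdot g_1\subset V_1$ because $\phi^{-N}$ fixes $V_1$ pointwise; hence $g:=g_1\phi^{-N}$ is the required element. I expect the main obstacle to be this second stage: one must manufacture a contracting element whose relevant fixed point (the attractor of $\phi^{-1}$ on the right) lies \emph{strictly inside} the bounded target $V_2$ rather than merely to its left, so that the contraction lands in $V_2$ instead of overshooting below it — this is precisely what placing $q\in\mathrm{int}(V_2)$ via the conjugation achieves, and the same placement keeps the support of $\phi$ clear of $V_1$.
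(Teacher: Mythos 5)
Your proof is correct and follows essentially the same two-stage route as the paper's: the paper also first applies Lemma \ref{transepsilon} to place $U_1$ in $V_1$ while pushing $U_2$ above $\sup(V_2)$, then conjugates the coherence element $f$ by an element (obtained from Lemma \ref{IntervalTransitivity}) carrying its fixed-point interval into $\mathrm{int}(V_2)$, and takes a negative power $f_2^{-n}$ to pull the image of $U_2$ into $V_2$ while fixing $V_1$ pointwise. Your extra care in verifying that the attracting fixed point $q$ lands strictly inside $V_2$ is exactly the (implicit) point that makes the paper's final step work.
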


\begin{proof}
Using Lemma \ref{transepsilon}, we find a $g_1\in G$ such that:
\begin{enumerate}
\item $U_1\cdot g_1\subset V_1$.
\item $inf(U_2\cdot g_1)>sup(V_2)$. 
\end{enumerate}
Next, let $f\in G$ be an element which fixes an interval $(-\infty,r_1)$
pointwise and is increasing on an interval $(r_2,\infty)$.
Using lemma \ref{IntervalTransitivity}, we find an element $f_1\in G$ such that $[r_1,r_2]\cdot f_1\subset int(V_2)$.
Consider the element $f_2=f_1^{-1} f f_1$.
It follows that $f_2$ fixes $V_1$ point wise and there is an $n\in \mathbf{N}$ such that $U_2\cdot f_2^{-n}\subset V_2$.
Therefore the required element is $g=g_1 f_2^{-n}$.
\end{proof}

\begin{proof}[Proof of Proposition \ref{locallydense}]
Consider a triple $(U,V,x)$ where $U\subset V$, the sets $U,V$ are open intervals and $x\in V\setminus U$.
We assume that $sup(U)\leq x$, and the case when $x\leq inf(U)$ is similar.
Let $\delta>0$ be such that $(x,x+\delta)\subset V$.
Let $f\in G$ be a compactly supported element for which there are $$r_1,r_2,r_3,r_4\in \mathbf{R}\qquad  r_1<r_2<r_3<r_4$$ such that:
\begin{enumerate}
\item The support of $f$ is contained in $(r_1,r_4)$
\item The interval $(r_2,r_3)$ is a connected component of the support of $f$.
\end{enumerate} 
Then using Lemma \ref{finaltrans} we find $g\in G$ such that:
\begin{enumerate}
\item $[r_1,r_2]\cdot g\subset U$ 
\item $[r_3,r_4]\cdot g\subset (x,x+\delta)$.
\end{enumerate}
It follows that the element $f_1=g^{-1} f g$ has its support contained in $V$ and that there is an $n\in \mathbf{Z}$
such that $x\cdot f_1^n\in U$.
Hence proved.
\end{proof}

\begin{proof}[Proof of Theorem \ref{rigidity}]
The proof follows immediately from Proposition \ref{locallydense} and Theorem \ref{rubin}.
\end{proof}

\begin{remark}
Note that to prove Theorem \ref{rigidity},
we do not need the full strength of the definition of coherent actions.
Indeed reading through the proofs in this section, the reader will find that we have also shown the following statement.
\end{remark}

\begin{cor}
Let $G,H<\textup{Homeo}^+(\mathbf{R})$ be group actions such that:
\begin{enumerate}
\item Both actions are minimal.
\item Both actions contain non identity elements with supports contained in compact intervals.
\item There exist $g\in G, h\in H$ such that $g,h$ have trivial germs at $-\infty$ and there is an interval $(r,\infty)$
on which both $g,h$ are increasing.
\item $G\cong H$.
\end{enumerate}
Then for each isomorphism $\nu:G\to H$ there is a homeomorphism $\phi:\mathbf{R}\to \mathbf{R}$ such that $\nu(f)=\phi^{-1} f \phi$ for each $f\in G$.
\end{cor}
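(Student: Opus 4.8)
The plan is to observe that the proof of Theorem~\ref{rigidity} does not use Theorem~\ref{rubin} directly, but rather factors through Proposition~\ref{locallydense}: once both actions are shown to be locally minimal, the equivalence of locally minimal and locally dense for subgroups of $\textup{Homeo}^+(\mathbf{R})$ lets us invoke Rubin's theorem. Thus it suffices to prove the analogue of Proposition~\ref{locallydense} under the weaker hypotheses (1)--(3). I would do this by tracing the chain Lemma~\ref{compactlysupported} $\Rightarrow$ Lemma~\ref{IntervalTransitivity} $\Rightarrow$ Lemmas~\ref{trans},~\ref{transepsilon},~\ref{finaltrans} $\Rightarrow$ Proposition~\ref{locallydense} and checking that at each step the only features of a coherent action actually consumed are minimality, the presence of a nontrivial compactly supported element, and the presence of an element with trivial germ at $-\infty$ that is increasing on some $(r,\infty)$.

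Concretely, the only two places where coherence is genuinely used are Lemmas~\ref{compactlysupported} and~\ref{IntervalTransitivity}. In Lemma~\ref{compactlysupported} coherence enters solely to manufacture a nontrivial compactly supported element $f$ (via iterated commutators, using nonsolvability together with solvable germs); under hypothesis~(2) such an $f$ is given outright, and the remainder of the argument---conjugating $f$ by elements supplied by minimality so that its components of support cover $[r_1,r_2]$, then forming a suitable word---uses only hypothesis~(1). In Lemma~\ref{IntervalTransitivity} the element $f_1$ that is the identity on $(-\infty,r_1)$ and increasing on $(r_2,\infty)$ is exactly the element furnished by hypothesis~(3), and the rest of that proof calls only on the rederived Lemma~\ref{compactlysupported} and on minimality. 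With Lemma~\ref{IntervalTransitivity} in hand, Lemmas~\ref{trans},~\ref{transepsilon} and~\ref{finaltrans} go through verbatim, since each invokes only a germ-trivial-at-$-\infty$ increasing element and Lemma~\ref{IntervalTransitivity}; the proof of Proposition~\ref{locallydense} then applies unchanged. Here I would note that although that proof splits into the cases $\sup(U)\le x$ and $x\le\inf(U)$, the symmetric case is absorbed by the order-preserving flexibility of Lemma~\ref{finaltrans} (which can carry any ordered pair of intervals to any other ordered pair), so no element with trivial germ at $+\infty$ is ever required.

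Since hypotheses~(1)--(3) are symmetric in $G$ and $H$, the same argument shows that $H$ is locally minimal, hence locally dense, on $\mathbf{R}$. Both actions being locally dense, Theorem~\ref{rubin} applied to the isomorphism $\nu$ of hypothesis~(4) produces the desired conjugating homeomorphism $\phi$. The main obstacle is not any new mathematics but the bookkeeping: one must verify carefully that nonsolvability and solvability of the germs were used in the previous section \emph{only} to produce compactly supported elements, and that the one-sided germ condition of hypothesis~(3) really does suffice throughout---in particular that the symmetric case of Proposition~\ref{locallydense} does not covertly require a $+\infty$-side element.
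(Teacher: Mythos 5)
Your proposal is correct and is essentially the paper's own argument: the corollary there is justified precisely by the remark that the proofs of Lemmas \ref{compactlysupported}--\ref{finaltrans} and Proposition \ref{locallydense} consume only minimality, one nontrivial compactly supported element, and one element with trivial germ at $-\infty$ increasing on some $(r,\infty)$, after which Rubin's theorem applies to both locally minimal (hence locally dense) actions. Your extra check that the case $x\le\inf(U)$ in Proposition \ref{locallydense} is handled by the order-preserving flexibility of Lemma \ref{finaltrans}, with no $+\infty$-side element needed, is accurate and fills in exactly the bookkeeping the paper leaves to the reader.
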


\section{Combinatorially finite group actions}

The goal of this section to develop some tools which shall be at the core of the applications.
We shall work with two important notions of \emph{combinatorially finite} and \emph{weakly coherent} actions,
which we now define.

A group action $G<\textup{Homeo}^+(I)$ (where $I=\mathbf{R}$ or $I$ is a compact interval)
is said to be \emph{combinatorially finite} if the following holds:
\begin{enumerate}
\item The underlying group is finitely generated.
\item Every element has finitely many components of support.
\item The groups of germs at each point $x\in I\cup \{sup(I),inf(I)\}$ are all abelian.
\end{enumerate}

A combinatorially finite action $G<\textup{Homeo}^+(I)$ is said to be \emph{weakly coherent} if the following holds.
Let $n$ be the maximum of the abelian rank of the groups of germs of $G$ at $inf(I),sup(I)$.
Then there is some coherent action $H<\textup{Homeo}^+(I)$ such that:
\begin{enumerate}
\item The groups of germs of $H$ at $inf(I),sup(I)$ contain abelian subgroups of rank $n$
\item $H\cong G$.
\end{enumerate}
Note that it is in fact true that if $G<\textup{Homeo}^+(I)$ is weakly coherent, and if $K<\textup{Homeo}^+(I)$
is any coherent group such that $K\cong G$, then $K$ satisfies the above.
This is an immediate consequence of Theorem \ref{rigidity}.
Weak coherence shall be a useful property for our applications, and in particular shall imply coherence in certain circumstances.

\begin{prop}\label{exceptionalcombfinite}
Let $G<\textup{Homeo}^+(I)$ be a combinatorially finite action such that:
\begin{enumerate}
\item The action is weakly coherent.
\item $int(I)$ does not contain a proper subinterval which is $G$-invariant.
\end{enumerate}
Then either the action is minimal, or it admits a unique exceptional, minimal, invariant set $\Gamma<I$. 
Moreover, the restriction of $G$ to $\Gamma$ is faithful.
\end{prop}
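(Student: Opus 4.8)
The plan is to feed the action into the trichotomy of Lemma \ref{minimalinvariant} and then use the algebraic structure supplied by weak coherence (through Theorem \ref{structure}) to eliminate all but the two desired outcomes. First I would record the consequences of the hypotheses that do the work. Since $G$ is combinatorially finite it is finitely generated, and by hypothesis $int(I)$ has no proper $G$-invariant subinterval, so Lemma \ref{minimalinvariant} applies and yields three cases: the action is minimal, there is a discrete invariant set, or there is an exceptional minimal invariant set. The first case is exactly the first alternative in the statement, so nothing is needed there. On the algebraic side, weak coherence gives an isomorphism with a coherent group, so Theorem \ref{structure} applies to $G$ as an abstract group: $G$ is non-solvable, there is an $m$ with $G^{(m)}$ simple, and (as in the proof of Theorem \ref{structure}(2)--(3)) every non-trivial normal subgroup of $G$ contains this simple subgroup $G^{(m)}$. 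Moreover, because the action is combinatorially finite its germs at the two ends of $I$ are abelian, whence $G'\le G_c$; in particular $G^{(m)}\le G'\le G_c$ is a \emph{non-trivial simple group all of whose elements are compactly supported}.

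The heart of the argument is a single lemma that simultaneously rules out the discrete case and yields faithfulness in the exceptional case. In either remaining case let $\Gamma$ be the invariant set produced by Lemma \ref{minimalinvariant}, let $K$ be the (normal) kernel of the restriction $G\to\Homeo^+(\Gamma)$, and note that an element of $K$ fixes $\Gamma$ pointwise and so has support contained in the complementary intervals (``gaps'') of $\Gamma$; by combinatorial finiteness each such element meets only finitely many gaps. I claim $K=1$. Suppose not; then $K$ contains the simple group $G^{(m)}$, so $G^{(m)}$ is a non-trivial simple group every element of which is supported on finitely many gaps. Let $\Sigma$ be the set of gaps on which $G^{(m)}$ acts non-trivially; $\Sigma$ is $G$-invariant, and it is infinite because hypothesis (2) forbids any finite $G$-invariant subset of $\Gamma$ (the convex hull of such a set would be a proper invariant subinterval), so every $G$-orbit of gaps is infinite. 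For any single gap $C\in\Sigma$ the kernel of the restriction $G^{(m)}\to\Homeo^+(\overline{C})$ is a proper normal subgroup of the simple group $G^{(m)}$, hence trivial, so this restriction is injective. Now pick any non-trivial $g\in G^{(m)}$: it meets only finitely many gaps, so since $\Sigma$ is infinite there is a gap $C\in\Sigma$ that $g$ does not meet, i.e. $g$ is trivial on $\overline{C}$, contradicting injectivity of the restriction to $\overline{C}$. Hence $K=1$.

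This disposes of the discrete case at once: there $\Gamma$ is order-isomorphic to $\mathbf{Z}$ (it is discrete, invariant, and its convex hull is all of $int(I)$), so $G$ acts on it through the abelian group $\Aut(\mathbf{Z},<)\cong\mathbf{Z}$, whence $G^{(m)}$ acts trivially on $\Gamma$ and $1\ne G^{(m)}\le K=1$, a contradiction. So case (ii) cannot occur, and in case (iii) the restriction of $G$ to the exceptional set $\Gamma$ is faithful, which is the ``moreover'' clause. For uniqueness of $\Gamma$ I would invoke the uniqueness clause of Lemma \ref{minimalinvariant}(iii) by exhibiting a point of $\Gamma$ that is an accumulation point of every orbit: for arbitrary $y\in int(I)$, if $\overline{y\cdot G}$ missed $\Gamma$ then, using minimality of $\Gamma$ (the $G$-orbit of any gap contains gaps of arbitrarily small diameter clustering at points of $\Gamma$), one carries a point of $y\cdot G$ into ever-smaller gaps and produces points of $y\cdot G$ arbitrarily close to $\Gamma$, contradicting $\overline{y\cdot G}\cap\Gamma=\varnothing$; thus $\overline{y\cdot G}\supseteq\Gamma$ for every $y$, and any $x\in\Gamma$ is a common accumulation point.

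The main obstacle is precisely the faithfulness step of the second paragraph: the task is to convert the purely dynamical datum ``$g$ meets only finitely many gaps'' (combinatorial finiteness) into an algebraic contradiction, and the mechanism that achieves this is the simplicity of the compactly supported core $G^{(m)}$ together with the fact that gap-orbits are infinite. Everything else---the reduction via Lemma \ref{minimalinvariant}, the extraction of the simple compactly supported core from Theorem \ref{structure}, and the uniqueness via the accumulation-point clause---is comparatively routine; the one further point that needs care in the write-up is the clustering argument for uniqueness, where combinatorial finiteness is again used to keep the relevant supports under control.
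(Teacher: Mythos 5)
Your proof is correct, but it takes a genuinely different route from the paper's. The paper's proof pivots on Lemma \ref{elements}: the element $f_1$ with trivial germ at $inf(I)$ and increasing on a terminal interval simultaneously forces every orbit to accumulate at its fixed point $r_1$ (which kills the discrete case and gives uniqueness via the accumulation clause of Lemma \ref{minimalinvariant}) and, paired with a conjugate $f_3$ of the dual element, generates a copy of $F$ whose restriction to $\Gamma$ is visibly non-abelian; since every proper quotient of $F$ is abelian and every proper quotient of $G$ is solvable (Theorem \ref{structure}(2)), the restriction to $\Gamma$ is non-solvable and hence faithful. You bypass Lemma \ref{elements} --- the technical heart of the section --- entirely: your faithfulness argument runs through the simple core $G^{(m)}$ of Theorem \ref{structure}(3), contained in every non-trivial normal subgroup, whose elements touch only finitely many gaps by combinatorial finiteness, against injectivity of the restriction to each gap of an infinite invariant family $\Sigma$ (injectivity by simplicity, infinitude of gap-orbits by the convex-hull argument); you kill the discrete case by abelianness of $\Aut(\mathbf{Z},<)\cong\mathbf{Z}$ plus the same kernel argument, and you prove the common-accumulation-point property directly by the gap-clustering argument rather than via a special element. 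Each approach buys something: the paper's reuses machinery (Lemma \ref{elements}) that is needed anyway for Proposition \ref{combfinite}, and exhibits an explicit $F$ acting on $\Gamma$; yours is independent of the type-A/B/C analysis, leans instead on simplicity, and handles the discrete case explicitly rather than implicitly. Two write-up points to tighten, neither a gap: hypothesis (2) forces $\Gamma$ to accumulate at both ends of $I$ (otherwise its convex hull is a proper invariant subinterval), which is what makes all gaps bounded with endpoints in $\Gamma$ --- this underlies both your convex-hull step and the clustering step; and your closing remark misattributes the clustering argument to combinatorial finiteness, when in fact it needs only disjointness of the gap images and density of the orbit of a gap endpoint in the minimal set $\Gamma$.
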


In the hypothesis of the above proposition when we say that the underlying group $G$ admits a coherent group action,
we mean that there is a coherent action $H<\textup{Homeo}^+(I)$
such that the underlying group $H$ is isomorphic to $G$.
However, the prescribed action of $G$ will not be coherent.

\begin{prop}\label{combfinite}
Let $G<\textup{Homeo}^+(I)$ be a combinatorially finite action satisfying the following:
\begin{enumerate}
\item The action is weakly coherent.
\item The action is minimal on $int(I)$.
\end{enumerate}
Then the given action is coherent.
\end{prop}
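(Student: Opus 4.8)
The plan is to verify the four defining conditions of a coherent action (in the compact–interval form given in the preliminaries) for our given action. Conditions (1) and (2) are immediate: minimality on $int(I)$ is hypothesis (2) of the Proposition, and since the action is combinatorially finite the groups of germs at $inf(I)$ and $sup(I)$ are abelian, hence solvable. All of the work therefore lies in producing the two ``one-sided'' elements of conditions (3) and (4): an element with trivial germ at $inf(I)$ fixing no point of some interval $(r,sup(I))$, and symmetrically an element with trivial germ at $sup(I)$.

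First I would record the algebraic consequences of weak coherence. Since $G$ is isomorphic to a coherent action, Theorem \ref{structure} gives that $G$ is non solvable, so every iterated commutator subgroup $G^{(\ell)}$ is non trivial. Writing $A_-,A_+$ for the abelian germ groups at $inf(I),sup(I)$, the combined germ homomorphism $G\to A_-\times A_+$ has kernel exactly $G_c$: by combinatorial finiteness an element with trivial germ at both ends has finitely many components of support, all compactly contained in $int(I)$, hence is compactly supported. Thus $G/G_c$ embeds in the abelian group $A_-\times A_+$, so $G/G_c$ is abelian and $G_c$ is non solvable, in particular non trivial. The proof of Lemma \ref{compactlysupported} uses only non solvability, solvability of the germs, and minimality, so it applies verbatim and shows that for any $r_1<r_2$ some element of $G_c$ carries $r_1$ past $r_2$; hence the normal subgroup $G_c$ has full support on $int(I)$. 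Finally, since $G$ is finitely generated and minimal, the generators cannot all be compactly supported (their common support would be a proper compact subset of $int(I)$), so at least one endpoint germ group is non trivial.

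The heart of the argument is conditions (3) and (4). Let $\Gamma\le A_-\times A_+$ be the image of the combined germ map; both coordinate projections onto $A_-$ and onto $A_+$ are surjective by definition of the germ groups. Condition (3) is equivalent to the projection $\Gamma\to A_-$ being \emph{not} injective: a non trivial element of its kernel is exactly an element with trivial germ at $inf(I)$ and non trivial germ at $sup(I)$, which by combinatorial finiteness has a component of support of the form $(r,sup(I))$ and hence fixes no point there; symmetrically condition (4) asserts that $\Gamma\to A_+$ is not injective. It therefore suffices to exclude the ``locked'' situation in which a projection, say $\Gamma\to A_-$, is injective, so that $\Gamma$ is the graph of a surjection $A_-\to A_+$, the equality $G_{inf(I)}^+=G_c$ holds, and all germ behaviour at $sup(I)$ is determined by the germ at $inf(I)$.

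To rule out the locked case I would exploit the abstract invariants of Theorem \ref{structure}: there is an $m$ with $N:=G^{(m)}$ simple and non abelian and $Q:=G/N$ solvable. Since $A_-,A_+$ and $\Gamma$ are abelian quotients of $G$ annihilating $N\subseteq G_c$, the whole germ configuration is visible inside the fixed solvable group $Q$, and the \emph{same} $Q$ and $N$ arise from the coherent model $H\cong G$ supplied by weak coherence, where conditions (3) and (4) do hold and where the germ groups realise abelian subgroups of rank $n=\max(\rk A_-,\rk A_+)$ on \emph{both} ends. The plan is to compare, inside $Q$, the rank of the germ data carried by each end: in the locked case the rank-$n$ group $\Gamma\cong A_-$ is concentrated on one side while the other side is merely a quotient of it, whereas the two-sided rank-$n$ realisation forced by weak coherence requires the germ data to split across the two ends, contradicting the lock on both sides. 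Making this rank comparison precise inside $Q$, and confirming that the relevant quantity is a genuine isomorphism invariant and not an artefact of the chosen action, is the step I expect to be the main obstacle; once the locked configuration is excluded for each projection, conditions (3) and (4) follow, and together with (1) and (2) this shows the given action is coherent.
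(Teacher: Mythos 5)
Your reduction is correct as far as it goes: conditions (1) and (2) of coherence are immediate, condition (3) is equivalent to non-injectivity of the projection of the combined germ image $\Gamma\le A_-\times A_+$ onto $A_-$ (with combinatorial finiteness converting ``trivial germ at $inf(I)$, non-trivial germ at $sup(I)$'' into a support component of the form $(r,sup(I))$), and symmetrically for (4). This matches the paper, whose proof of Proposition \ref{combfinite} is a one-line appeal to Lemma \ref{elements}; what you call excluding the ``locked'' case \emph{is} the entire content of Lemma \ref{elements}, and it is exactly here that your proposal has a genuine gap, which you yourself flag. The rank comparison inside the solvable quotient $Q=G/G^{(m)}$ cannot work as sketched: in the locked case $\Gamma$ is the graph of a surjection $A_-\to A_+$, and nothing prevents $A_-$, $A_+$ and $\Gamma$ from all having rank $n$, equal to the ranks in the coherent model $H$ — for instance $A_-\cong A_+\cong\mathbf{Z}^n$ with $\Gamma$ the graph of an isomorphism. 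No rank count distinguishes this from the unlocked configuration, and more fundamentally the decomposition of germ data into a ``left end'' and a ``right end'' piece is action-dependent, not an isomorphism invariant of $G$ or of $Q$: the paper explicitly remarks after Lemma \ref{germinfinity} that the type of $f$ and of $\phi(f)$ may differ, so you cannot locate ``the germ at $inf(I)$'' algebraically inside $Q$.

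The paper's mechanism for breaking the lock is dynamical, not algebraic, and it is the idea missing from your proposal. Fix an isomorphism $\phi:G_1\to G$ from a coherent model. First one shows (Lemmas \ref{fullysupported}--\ref{germinfinity}) that $\phi$ cannot send an element of type A, B or C to a compactly supported element: if $\phi(f)$ were compactly supported, one conjugates it into a far-right interval where it commutes with all large powers of (a conjugate by) the image $\phi(g)$ of a fully supported element $g\in G_1$, whereas in $G_1$ the elements $g^{-n}\phi^{-1}(k)g^{n}$ and $\phi^{-1}(k)$ never commute for $n\neq 0$ (Lemma \ref{lemtype}) — a contradiction with $\phi$ being an isomorphism. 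Then a pigeonhole argument finishes: take $g,g_1,\dots,g_n\in G_1$ where $g$ has trivial germ at $-\infty$ and fixes no point near $+\infty$, and the germs of $g_1,\dots,g_n$ at $-\infty$ generate a rank-$n$ abelian group; every non-trivial product $g^{a_0}g_1^{a_1}\cdots g_n^{a_n}$ has a non-trivial germ at one end, hence so does its $\phi$-image, while since the germ group of $G$ at $inf(I)$ has rank at most $n$, some non-trivial combination $\phi(g)^{a_0}\phi(g_1)^{a_1}\cdots\phi(g_n)^{a_n}$ has \emph{trivial} germ at $inf(I)$. That element is then forced to have non-trivial germ at $sup(I)$, and combinatorial finiteness yields the required component $(r,sup(I))$. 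So your framing of the target is right, but to complete the proof you need this transfer-of-support argument across $\phi$ (or some substitute), not an invariant extracted from $Q$.
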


The two Propositions combine nicely to provide us with the following useful corollary.

\begin{cor}\label{maincor}
Let $G<\textup{Homeo}^+(I)$ be a combinatorially finite action such that:
\begin{enumerate}
\item The action is weakly coherent.
\item $I$ does not contain a proper subinterval which is $G$-invariant.
\end{enumerate}
Then the action is semiconjugate to a combinatorially finite and coherent action $H<\textup{Homeo}^+(I)$
such that the semiconjugacy induces an isomorphism between the underlying groups $G,H$.
\end{cor}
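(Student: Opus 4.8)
The plan is to feed the two preceding Propositions into the dynamical dichotomy of Lemma \ref{DynDichotomy}, using the faithfulness clause of Proposition \ref{exceptionalcombfinite} to kill the kernel that the dichotomy would otherwise introduce. First I would note that the hypotheses of Proposition \ref{exceptionalcombfinite} hold: the action is combinatorially finite and weakly coherent by assumption, and since any proper $G$-invariant subinterval of $int(I)$ is in particular a proper $G$-invariant subinterval of $I$, the absence of the latter gives the absence of the former. Thus either the action is minimal on $int(I)$, or it admits a unique exceptional, minimal, invariant set $\Gamma \subset int(I)$ on which the restriction of $G$ is faithful.

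In the minimal case there is nothing to collapse: Proposition \ref{combfinite} applies directly and shows that the given action is already coherent, so I take $H = G$ and let the semiconjugacy be the identity. In the exceptional case I would invoke Lemma \ref{DynDichotomy} to obtain a monotone surjection $\Phi : I \to I$, a group $H < \textup{Homeo}^+(I)$ and a homomorphism $\psi : G \to H$ with $\Phi \circ g = \psi(g) \circ \Phi$ for all $g$, where $H$ acts minimally on $int(I)$ and $H \cong G/K$ for $K$ the kernel of the restriction to $\Gamma$. Here faithfulness is decisive: $K$ is trivial, so $\psi$ is an isomorphism and $\Phi$ is exactly the semiconjugacy inducing it.

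It then remains to verify that $H$ is combinatorially finite and weakly coherent, whereupon Proposition \ref{combfinite} upgrades the minimal action $H$ to a coherent one and the Corollary follows. Finite generation passes through $\psi$. For the component count I would argue that $\Phi$ collapses only the gaps of $\Gamma$ and is otherwise monotone, so each component of support of $\psi(g)$ is the $\Phi$-image of a union of finitely many components of support of $g$ (some possibly merged across collapsed gaps, none split); collapsing never manufactures new components, so $\psi(g)$ again has finitely many. Abelianness of the germs of $H$ transfers from $G$ by reading germs through the intertwining relation $\Phi \circ g = \psi(g) \circ \Phi$.

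The main obstacle I anticipate is the weak coherence of $H$, which hinges on showing that $\Phi$ preserves the germ data at the endpoints so that the integer $n$ governing weak coherence is the same for $G$ and for $H$. Since $\Gamma$ is minimal with no proper invariant subinterval, its convex hull is all of $int(I)$ and hence $\Gamma$ accumulates at both $inf(I)$ and $sup(I)$; consequently $\Phi$ carries each one-sided neighborhood of an endpoint onto a one-sided neighborhood of the same endpoint. The delicate point, given that $\Phi$ is only a monotone surjection rather than a homeomorphism, is to show that $g$ fixes a neighborhood of $sup(I)$ if and only if $\psi(g)$ does, so as to identify $G_{sup(I)}/G_{sup(I)}^{-}$ with $H_{sup(I)}/H_{sup(I)}^{-}$ (and likewise at $inf(I)$), forcing the maximal abelian germ ranks to agree. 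Once this is established, weak coherence of $H$ follows from that of $G$---any coherent action isomorphic to $G$ is then also an isomorphic coherent model for $H$ with endpoint germs of the correct rank---and the two Propositions close the argument.
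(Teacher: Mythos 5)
Your proposal follows the paper's own proof in all essentials: the paper makes the same case split (minimal case handled directly by Proposition \ref{combfinite}; otherwise Proposition \ref{exceptionalcombfinite} yields the unique exceptional minimal set $\Gamma$ with faithful restriction), performs the same collapse of the gap closures of $\Gamma$ (the construction behind Lemma \ref{DynDichotomy}, via Navas), uses faithfulness to make the induced map on groups an isomorphism, checks that the collapsed action is combinatorially finite and weakly coherent, and closes with Proposition \ref{combfinite}. The one place you diverge is the endpoint-germ analysis, and there you overshoot. Weak coherence of $H$ does not require the maximal abelian germ ranks of $G$ and $H$ to agree, only that they do not increase. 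Since $\Gamma$ accumulates at both endpoints, $\Phi$ carries one-sided neighborhoods of $sup(I)$ onto one-sided neighborhoods of $sup(I)$, so $\psi$ maps $G_{sup(I)}^{-}$ into $H_{sup(I)}^{-}$ and the germ group of $H$ at $sup(I)$ is a \emph{quotient} of that of $G$ (likewise at $inf(I)$); hence the new maximum rank $n'$ satisfies $n'\leq n$. The coherent model $K$ witnessing weak coherence of $G$ has endpoint germ groups containing abelian subgroups of rank $n$, hence of rank $n'$, so the very same $K$ witnesses weak coherence of $H$. This one-sided inequality is precisely what the paper records (``less than or equal to the rank pre-semiconjugation''), and it closes the argument with no further work.

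By contrast, your ``delicate point'' --- that $g$ fixes a neighborhood of $sup(I)$ if and only if $\psi(g)$ does --- is both unnecessary and left unestablished in your writeup; it genuinely needs an argument, since $\Phi$ is not injective and a priori $g$ could act nontrivially inside infinitely many gaps of $\Gamma$ accumulating at $sup(I)$ while $\psi(g)$ has trivial germ there. The claim does hold in this setting, but only because of combinatorial finiteness: if $\psi(g)$ fixes a one-sided neighborhood of $sup(I)$ pointwise, the intertwining relation forces every point moved by $g$ near $sup(I)$ to remain in its $\Phi$-fiber, so each component of the support of $g$ there lies inside a single gap closure; $g$ has only finitely many components of support, and every gap has supremum strictly below $sup(I)$, so $g$ is the identity on a neighborhood of $sup(I)$. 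With that supplement your route is complete, but it is a detour the paper's quotient observation makes superfluous.
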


\begin{proof}
If the action is minimal on $int(I)$, then using Proposition \ref{combfinite} we conclude that this action is coherent.
Assume that the action is not minimal on $int(I)$.
By Proposition \ref{exceptionalcombfinite}, there is a unique exceptional, minimal, invariant set $\Gamma<I$.
Moreover, the restriction of $G$ to $\Gamma$ is faithful.
The desired semiconjugacy is then obtained by collapsing to a point the closure of each connected component of the complement of $\Gamma$ in $I$.
(See section $2.1.2$ in \cite{Navas}, for instance.)
It is easy to see that this new action is also combinatorially finite.
Moreover, the abelian rank of the group of germs at $inf(I),sup(I)$ is less than or equal to the rank of these groups pre-semiconjugation.
So the resulting action post-semiconjugation is also weakly coherent.
Moreover, it is minimal on $int(I)$, thanks to Proposition \ref{combfinite} we conclude that this action is coherent.
\end{proof}

The proofs of both Proposition \ref{combfinite} and \ref{exceptionalcombfinite} will follow from an application of the following:

\begin{lem}\label{elements}
Let $G<\textup{Homeo}^+(I)$ be a combinatorially finite action satisfying that:
\begin{enumerate}
\item The action is weakly coherent.
\item $I$ does not contain a proper subinterval which is $G$-invariant.
\end{enumerate}
Then the following holds:
\begin{enumerate}
\item There is an element that has a trivial germ at $inf(I)$ and is increasing on an interval of the form $(r,sup(I))$ for $r\in int(I)$.
\item There is an element that has a trivial germ at $sup(I)$ and is increasing on an interval of the form $(inf(I),r)$ for $r\in int(I)$.
\end{enumerate}
\end{lem}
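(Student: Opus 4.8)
The plan is to phrase both conclusions in terms of the two germ homomorphisms $\phi_-\colon G\to G/G_{\inf(I)}^+$ and $\phi_+\colon G\to G/G_{\sup(I)}^-$ onto the germ groups at the two ends, which are abelian by combinatorial finiteness. An element has trivial germ at $\inf(I)$ exactly when it lies in $\ker\phi_-=G_{\inf(I)}^+$, and, having only finitely many components of support, it has a nontrivial germ at $\sup(I)$ precisely when it has a single unbounded component $(r,\sup(I))$ on which it is monotone; replacing it by its inverse if needed makes it increasing there. Thus conclusion (1) is equivalent to $\ker\phi_-\not\subseteq\ker\phi_+$, and conclusion (2), which is entirely symmetric, to $\ker\phi_+\not\subseteq\ker\phi_-$. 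It therefore suffices to prove these two incomparabilities; I would argue (1) and obtain (2) by interchanging the roles of $\inf(I)$ and $\sup(I)$.

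First the cheap half: some element has a nontrivial, hence after inversion increasing, germ at each end. If every element had trivial germ at $\sup(I)$, then each of the finitely many generators would fix a common half-neighbourhood of $\sup(I)$ pointwise, producing a global fixed point in $\textup{int}(I)$ and hence a proper $G$-invariant subinterval, contrary to hypothesis (2). So $\phi_+$ is nontrivial, and likewise $\phi_-$; this already yields increasing-at-each-end elements, but with no control at the opposite end.

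The real content is to exclude the degenerate case in which one germ is a function of the other, say $\ker\phi_-\subseteq\ker\phi_+$. There $\phi_+$ factors as $\theta\circ\phi_-$, the double germ map $(\phi_-,\phi_+)$ embeds $G/G_c$ diagonally into the product of the two germ groups (here $G_c=\ker\phi_-\cap\ker\phi_+$ is exactly the compactly supported subgroup, since trivial germs at both ends together with finitely many components force bounded support), and $G/G_c$ becomes a single abelian germ group of rank $n$. I would rule this out using weak coherence together with the algebraic structure of Theorem \ref{structure}. Since $G$ is isomorphic to a coherent group, it has a unique minimal nontrivial normal subgroup $N=G^{(k)}$, which is simple; because the germs here are abelian and each element has finitely many components, $N\subseteq G'\subseteq G_c$, so $N$ is compactly supported in the given action and $G/G_c$ is an abelian quotient of the solvable group $G/N$. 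In the coherent model $H\cong G$, by contrast, the defining conditions (3),(4) of coherence supply genuine type-(1) and type-(2) elements, so there the two germ kernels are incomparable; matching the rank-$n$ data guaranteed by weak coherence against this should force the given action to have germ groups large enough that the diagonal collapse above cannot occur. I note that a purely dynamical argument cannot suffice, since a single translation of $\mathbf{R}$ is combinatorially finite with no invariant subinterval yet has dependent germs — so weak coherence must be used essentially here.

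Before running this comparison I would reduce to the minimal case: by Lemma \ref{minimalinvariant} the action is either minimal on $\textup{int}(I)$ or admits a unique exceptional minimal set, and in the latter case the semiconjugacy of Lemma \ref{DynDichotomy} transports the problem to a minimal, still combinatorially finite and weakly coherent action while not increasing the germ ranks, so an increasing-at-one-end element there pulls back to one for $G$ (the semiconjugacy is monotone and commutes with the action, so it preserves germs at the ends provided the exceptional set is unbounded). The step I expect to be genuinely delicate is the last one: the kernels $\ker\phi_\pm$ are defined by the \emph{given}, non-coherent action and need not correspond, under the abstract isomorphism, to the analogous kernels of $H$. Bridging this action-dependence is the crux, and it is precisely what the rank-$n$ clause in the definition of weak coherence (together with the rigidity of coherent actions, Theorem \ref{rigidity}) is there to supply; making that bridge precise, rather than the dynamical constructions, is where the real work lies.
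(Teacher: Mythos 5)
Your reformulation is sound as far as it goes: conclusion (1) is indeed equivalent (given combinatorial finiteness) to $\ker\phi_-\not\subseteq\ker\phi_+$, the identification $G_c=\ker\phi_-\cap\ker\phi_+$ is correct, the ``cheap half'' is fine, and the translation example correctly shows weak coherence must be used essentially. But the proposal stops exactly where the proof has to begin: you give no argument ruling out $\ker\phi_-\subseteq\ker\phi_+$, only the statement that matching the rank-$n$ data ``should force'' the collapse not to occur, together with an explicit concession that bridging the action-dependence of the kernels ``is where the real work lies.'' That bridge is the entire content of the lemma. Moreover, the algebraic facts you invoke give no traction on it: $G'\subseteq G_c$ holds automatically here (the end germs are abelian and elements have finitely many support components), so in the degenerate case ``$G/G_c$ abelian of rank at most $n$'' is not in tension with anything in Theorem \ref{structure} --- coherent groups do admit solvable, even abelian-by-solvable, quotients of this shape (cf.\ Lemma \ref{solvability}), so no comparison of abstract quotients can exclude the collapse.

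The mechanism the paper uses, and which is absent from your sketch, is a commutation argument that transfers germ data across the abstract isomorphism $\phi:G_1\to G$ with a coherent model $G_1$: (i) $G_1$ contains a fully supported element (Lemma \ref{fullysupported}), and an element of type A, B or C never commutes with its conjugates $g^{-n}fg^{n}$ by powers of a fully supported $g$ (Lemma \ref{lemtype}); (ii) consequently the $\phi$-image of any element with a nontrivial germ at an end cannot have support in a compact interval --- if it did, it would commute with its conjugate by a high power of the image of a fully supported element, and pulling back through $\phi$ contradicts (i) (Lemmas \ref{fullysupportedimage} and \ref{germinfinity}); (iii) a pigeonhole then finishes: choose in $G_1$ a type-B element $g$ (trivial germ at $-\infty$) and $g_1,\dots,g_n$ whose germs at $-\infty$ generate a rank-$n$ abelian group (this is precisely what the rank clause of weak coherence buys), and since the germ group of $G$ at $\inf(I)$ has rank at most $n$, some nontrivial word $g^{a_0}g_1^{a_1}\cdots g_n^{a_n}$ has $\phi$-image with trivial germ at $\inf(I)$; by (ii) that image has a nontrivial germ at $\sup(I)$, and combinatorial finiteness converts this into an increasing tail $(r,\sup(I))$. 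Without something playing the role of (ii), your plan cannot close. A secondary point: your preliminary reduction to the minimal case via Lemmas \ref{minimalinvariant} and \ref{DynDichotomy} is not needed (the paper's proof never minimalizes) and sits awkwardly in the paper's logical order, since faithfulness of the restriction to the exceptional minimal set is established in Proposition \ref{exceptionalcombfinite} \emph{using} the present lemma; as written, you would have to justify injectivity of the semiconjugated action independently before pulling elements back.
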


We now supply the proofs of \ref{combfinite} and \ref{exceptionalcombfinite} using Lemma \ref{elements}.

\begin{proof}[Proof of Proposition \ref{combfinite}]
The action is assumed to be minimal on $int(I)$, and the groups of germs at $inf(I),sup(I)$ are abelian since the action is combinatorially finite.
The existence of the required elements follows immediately from Lemma \ref{elements}.
We conclude that the action is coherent.
\end{proof}

\begin{proof}[Proof of Proposition \ref{exceptionalcombfinite}]
If the action is minimal on $int(I)$, then we conclude directly that the action is coherent from Lemma \ref{elements}.
Assume that the action is not minimal on $int(I)$. 
Following Lemma \ref{elements}, let $f_1\in G$ be the element that has a trivial germ at $inf(I)$ and is increasing on an interval of the form $(r_1,sup(I))$ for $r_1\in int(I)$.
For any $x\in int(I)$, there is an element $g\in G$ such that $$x\cdot g\in (r_1,sup(I))$$
It follows that $$(x\cdot g)\cdot f^{-n}\to r_1$$
Therefore, every orbit in $int(I)$ accumulates to $r_1$.

Hence by Lemma \ref{minimalinvariant} this means there is a unique exceptional minimal invariant set $\Gamma\subset I$.
It suffices to show that the restriction of $G$ to $\Gamma$ is faithful.
Since the underlying group $G\in \mathcal{C}$, i.e. the underlying group admits some coherent action, we know that every proper quotient of $G$ is solvable.
So it suffices to show that the restriction of $G$ to $\Gamma$ contains a copy of Thompson's group $F$.
This will imply that the restriction is non solvable, and hence isomorphic to the original group.

Let $f_1$ be as above, and let $f_2$ be the element that has a trivial germ at $sup(I)$ and is increasing on an interval of the form $(inf(I), r_2)$ for some $r_2\in int(I)$.
Let $g\in G$ be such that $r_2\cdot g>r_1$.
It is possible to find such a $g$ since the action on $I$ has no proper invariant subinterval.
Let $f_3=g^{-1} f_2 g$.
Note that $(r_1,r_2\cdot g)\cap \Gamma\neq \emptyset$.
In fact, $r_1,r_2\cdot g$ are accumulation points of $(r_1,r_2\cdot g)\cap\Gamma$.

Now by the same argument as in the proof of theorem \ref{structure} part $(1)$, there are $m,n\in \mathbf{N}$ such that $\langle f_3^m,f_1^n\rangle$ is isomorphic to Thompson's group $F$.
By our assumptions above, it is easy to see that the restrictions of $f_1^n,f_3^m$ to $\Gamma$ do not commute, yet satisfy the relations of Thompson's group $F$.
Since every proper quotient of $F$ is abelian, it follows that restriction of $\langle f_3^m,f_1^n\rangle$ to $\Gamma$ is a faithful action of $F$.
This proves our claim that the restriction of $G$ to $\Gamma$ is non solvable and hence faithful.
\end{proof}

Our goal in the rest of the section is to prove Lemma \ref{elements}.
For notational convenience, we restrict ourselves to the case when $I=\mathbf{R}$, and the case where $I$ is a compact interval is dealt with similarly.
Let $G$ be as in the statement of the Lemma.
We fix a coherent action $G_1<\textup{Homeo}^+(\mathbf{R})$,
such that $G_1\cong G$.
We fix an isomorphism $\phi:G_1\to G$.
The notation we just defined above will be fixed throughout the rest of the section.

An element $f\in \textup{Homeo}^+(\mathbf{R})$ is said to be of:
\begin{enumerate}
\item \emph{type A}, if it has a trivial germ at $\infty$ and it does not fix any point in an interval of the form $(-\infty,r)$.
\item \emph{type B}, if it has a trivial germ at $-\infty$ and it does not fix any point in an interval of the form $(s,\infty)$.
\item \emph{type C}, if it does not fix any point in intervals of the form $(-\infty,r)$ and $(s,\infty)$ for some $r<s$, and fixes the points $r,s$.
\item \emph{fully supported} if it does not fix any points in $\mathbf{R}$.
\end{enumerate}

\begin{lem}\label{fullysupported}
There is an element $g\in G_1$ which is fully supported.
\end{lem}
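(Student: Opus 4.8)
The plan is to exhibit the fully supported element as a product $g = AB$, where $A$ is a type A element and $B$ a type B element, normalized so that the trivial germs handle the two ends of $\mathbf{R}$ and high powers handle the middle. Since $G_1$ is coherent, conditions (3) and (4) provide a type B and a type A element, respectively. Replacing them by their inverses if necessary, I would fix a type A element $A$ that is increasing (equivalently, $x < x\cdot A$) and fixed-point-free on a left ray, and a type B element $B$ increasing on a right ray. Writing $p = \inf\mathrm{Fix}(A)$, the ray $(-\infty,p)$ is a single component of support on which $A$ is increasing, and $\mathrm{Supp}(A)\subseteq(-\infty,R]$ is bounded above because the germ of $A$ at $+\infty$ is trivial; symmetrically $B$ is increasing on a ray $(q,\infty)$ and $\mathrm{Supp}(B)\subseteq[L,\infty)$ is bounded below.

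First I would make the two active rays overlap. The key observation is that conjugating $A$ by a compactly supported element $c$ replaces the component $(-\infty,p)$ by $(-\infty,p\cdot c)$, preserves the (trivial) germ at $+\infty$, keeps the support bounded above, and---by a one-line computation using that $A$ is increasing and $c$ is an orientation preserving homeomorphism---keeps the conjugate increasing and fixed-point-free on that ray. By Lemma \ref{compactlysupported} one can take $p\cdot c$ as large as desired, so after this conjugation I may assume $A$ is increasing and fixed-point-free on $(-\infty,P)$ and $B$ on $(Q,\infty)$, with $L < Q < P < R$, where $R = \sup\mathrm{Supp}(A)$ and $L = \inf\mathrm{Supp}(B)$. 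In particular the uncontrolled block $[P,R]$ of $A$ lies inside $B$'s active ray $(Q,\infty)$, and the uncontrolled block $[L,Q]$ of $B$ lies inside $A$'s active ray $(-\infty,P)$. Moreover $A$ fixes both $P$ and $R$ and hence preserves $[P,R]$, while $B$ fixes both $L$ and $Q$ and hence preserves $[L,Q]$, since these are endpoints of components of support or of the fixed ray.

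The main obstacle is the middle: on its bad block $[P,R]$ the element $A$ may move points downward, and likewise $B$ on $[L,Q]$, so $AB$ need not be increasing there. I would resolve this by passing to high powers. Iterating $A$ drives every point of $(-\infty,P)$ upward toward $P$, so $L\cdot A^k > Q$ for all large $k$; iterating $B$ drives every point of $(Q,\infty)$ to $+\infty$, so $P\cdot B^j > R$ for all large $j$. Replacing $A$ and $B$ by $A^k$ and $B^j$ changes none of $L,Q,P,R$, and now each factor lifts the other factor's bad block into its own active ray.

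Finally I would verify that $g = AB$ is fixed-point-free by a short case analysis on the location of $x$ (and, where needed, of $x\cdot A$), according to whether $x$ lies below $L$, in $[L,Q]$, in $(Q,P)$, in $[P,R]$, or above $R$. In every case one gets $x\cdot AB > x$: above $R$ the factor $A$ is the identity and $B$ increases $x$, and below $L$ one uses that $B$ is the identity below $L$ together with the fact that $B$ preserves $[L,Q]$; on $[L,Q]$ the estimate $L\cdot A^k > Q$ sends $x\cdot A$ past $Q$ into the active ray of $B$; on $[P,R]$ the point $x\cdot A$ stays in $[P,R]$ and the estimate $P\cdot B^j > R\ge x$ finishes; and on the overlap $(Q,P)$ both factors are increasing. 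Thus $g$ moves every point of $\mathbf{R}$, i.e. it is fully supported. The one step demanding care is precisely the choice of the two exponents, arranged so that each factor carries the other's bad block into its own increasing region; everything else is bookkeeping with the germ conditions and monotonicity.
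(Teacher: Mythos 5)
Your proof is correct and takes essentially the same approach as the paper: the paper also multiplies the two one-sided elements given by coherence, after conjugating the type A element so that its fixed-point-free left ray overlaps the other's right ray, and then asserts that a product of suitable high powers $f^n g_1^n$ is increasing on all of $\mathbf{R}$ — exactly your $A^kB^j$ mechanism. The only cosmetic differences are that the paper conjugates via bare minimality (reusing the elements from the proof of Theorem \ref{structure} part (1)) where you invoke Lemma \ref{compactlysupported}, and that you spell out the case analysis the paper leaves implicit.
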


\begin{proof}
Consider the elements $f,g_1$ we construct in the proof of theorem \ref{structure} part (1).
Note that the construction holds for any coherent action, so it does for $G_1$ in particular.
There is an $n\in \mathbf{N}$ such that $f^ng_1^n$ is increasing on $\mathbf{R}$.
\end{proof}

\begin{lem}\label{fullysupportedimage}
Let $g\in G_1$ be an element which is fully supported.
Then the element $\phi(g)$ is either of type C or is fully supported.
\end{lem}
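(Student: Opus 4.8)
The plan is to detect the germs of $\phi(g)$ at $\pm\infty$ through an algebraic invariant that $\phi$ automatically preserves, namely the conjugation action of $g$ on the characteristic simple subgroup of $G$. Write $K=G^{(n+1)}$, which by Theorem \ref{structure} equals $G_c'$ and is simple; since the derived series is preserved by any isomorphism, $\phi$ carries the corresponding subgroup $K_1 = G_1^{(n+1)} = (G_1)_c'$ of $G_1$ onto $K$. Every element of $K_1$ (resp. $K$) is compactly supported. Throughout write $\tilde g = \phi(g)$.

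First I would record the behaviour of $g$ against $K_1$. Since $g$ is fully supported it is fixed-point free, hence, being orientation preserving, it is strictly increasing or strictly decreasing on all of $\mathbf{R}$; in either case the only compact $g$-invariant subset of $\mathbf{R}$ is empty. Consequently, if $k\in K_1$ commutes with $g$ then $\operatorname{supp}(k)=\operatorname{supp}(k)\cdot g$ is a compact $g$-invariant set, forcing $k=1$. Thus $C_{K_1}(g)=\{1\}$, and since $\phi$ is an isomorphism with $\phi(K_1)=K$ and $\phi(g)=\tilde g$, this transports to the purely algebraic statement $C_{K}(\tilde g)=\{1\}$.

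Next I would contradict a trivial germ. Suppose $\tilde g$ has trivial germ at $+\infty$, so it fixes a ray $(r,\infty)$ pointwise. Any element of $K$ whose support lies in $(r,\infty)$ then commutes with $\tilde g$, so it suffices to exhibit a nontrivial such element in order to contradict $C_K(\tilde g)=\{1\}$. This is where the hypothesis that $\mathbf{R}$ has no proper $G$-invariant subinterval enters: it rules out global fixed points (a global fixed point $p$ would make $(p,\infty)$ invariant), whence every orbit is unbounded above and below, since the supremum of a bounded orbit would itself be a global fixed point. Therefore $G$ pushes any compact interval arbitrarily far to the right; conjugating a fixed nontrivial element of $K$ by such a group element and using that $K$ is normal produces a nontrivial element of $K$ supported inside $(r,\infty)$. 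This is the desired contradiction, so $\tilde g$ has nontrivial germ at $+\infty$, and the symmetric argument (using rays $(-\infty,r)$ and unboundedness below) gives nontrivial germ at $-\infty$.

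Finally I would read off the conclusion from combinatorial finiteness. Having nontrivial germs at both ends, $\tilde g$ has a rightmost component of support of the form $(b,\infty)$ and a leftmost one of the form $(-\infty,a)$; since $\tilde g$ has only finitely many components of support, all of its fixed points lie in the bounded set $[a,b]$. If these two components coincide then $\tilde g$ is fully supported, and otherwise $\tilde g$ fixes the boundary point(s) while being nontrivial near each end, which is exactly type C. I expect the crux to be the interface between the algebraic and dynamical pictures: the genuinely new point is recognising that ``fixed-point free'' for $g$ is encoded by the vanishing of $C_{K_1}(g)$ and hence survives $\phi$, while the technical care goes into guaranteeing, from the absence of invariant subintervals, that $K$ really does contain nontrivial elements supported in each far ray, so that a trivial germ is forced to produce a nontrivial centraliser.
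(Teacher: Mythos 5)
Your proof is correct and takes essentially the same route as the paper: both arguments show that a fully supported element has trivial centralizer in the compactly supported (characteristic) derived subgroup of $G_1$, transport this fact through $\phi$, and refute a trivial germ at $\pm\infty$ by conjugating a nontrivial compactly supported element of the corresponding derived subgroup of $G$ into the fixed ray, using the absence of proper invariant subintervals; the concluding step from nontrivial germs at both ends plus finitely many components of support to ``type C or fully supported'' is the same as what the paper leaves implicit. Your invocation of the simplicity of $K=G^{(n+1)}=G_c'$ is superfluous---only the facts that this subgroup is characteristic, nontrivial, normal, and compactly supported are actually used---but this does not affect correctness.
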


\begin{proof}
Since the group action $G_1$ is coherent, we know that for some $n\in \mathbf{N}$, the elements of $G_1^{(n)}$ have their supports contained in compact intervals.
(Also since the underlying group is non solvable, $G_1^{(n)}$ is non trivial.)
Note that by elementary arguments it follows that $g$ does not commute with any element of $G_1^{(n)}$.

Assume that $\phi(g)$ has a trivial germ at $\infty$ (a similar argument works for the case of $-\infty$).
Then there is an interval $(r,\infty)$ such that any element of $G$ whose support is contained in $(r,\infty)$
commutes with $\phi(g)$.

Using our hypothesis on germs, it is straightforward to conclude that the support of each element of $G^{(n)}$ lies in some compact interval of $\mathbf{R}$. 
Indeed, we can construct elements of $G^{(n)}$ whose support is contained in $(r,\infty)$.
For instance, take any non trivial element $h\in G^{(n)}$ and conjugate it by an appropriate element of $G$ so that the support of the conjugate is contained in $(r,\infty)$.
(This uses the fact that there is no $G$-invariant proper subinterval of $\mathbf{R}$.)

Since the conjugate is also in $G^{(n)}$, this contradicts our previous observation.
Therefore the germs of $\phi(g)$ at $\pm \infty$ must both be non trivial.
\end{proof}

\begin{lem}\label{lemtype}
The following holds:
\begin{enumerate}
\item Let $f\in \textup{Homeo}^+(\mathbf{R})$ be an element of type A, B or C, and let $g\in \textup{Homeo}^+(\mathbf{R})$.
Then the element $g^{-1} f g$ is of the same type as $f$.
\item Let $f\in \textup{Homeo}^+(\mathbf{R})$ be an element of type A, B or C, and let $g\in \textup{Homeo}^+(\mathbf{R})$ be a fully supported element.
Then for any $n\in \mathbf{Z}\setminus \{0\}$, the homeomorphisms $g^{-n} f g^n$ and $f$ do not commute.
\end{enumerate}
\end{lem}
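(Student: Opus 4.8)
The plan is to treat the two parts separately, with part (1) a routine transport-of-structure computation and part (2) the substantive claim.

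For part (1), I would first record that for any $g\in\textup{Homeo}^+(\mathbf{R})$ the conjugate $g^{-1}fg$ has fixed-point set $\mathrm{Fix}(g^{-1}fg)=\mathrm{Fix}(f)\cdot g$: in the right-action convention, $x\cdot(g^{-1}fg)=x$ holds precisely when $x\cdot g^{-1}\in\mathrm{Fix}(f)$. Since $g$ is an increasing homeomorphism of $\mathbf{R}$ it fixes the two ends $\pm\infty$, maps every half-line $(-\infty,r)$ onto $(-\infty,r\cdot g)$ and every half-line $(s,\infty)$ onto $(s\cdot g,\infty)$, and preserves the germ at each end. Each defining condition for types A, B, C (a trivial germ at an end; the absence of fixed points on a half-line $(-\infty,r)$ or $(s,\infty)$; fixing the endpoints $r,s$) is therefore transported to the corresponding condition for $g^{-1}fg$, so the type is preserved. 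This is just bookkeeping.

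For part (2), the device I would isolate first is an observation about centralizers. Suppose $k$ commutes with a homeomorphism $F$, and let $(-\infty,a)$ be the unique connected component of $\mathbf{R}\setminus\mathrm{Fix}(F)$ that is unbounded to the left, equivalently $a=\inf\mathrm{Fix}(F)$ with $F$ moving every point of $(-\infty,a)$. Because $k$ commutes with $F$ it preserves $\mathrm{Fix}(F)$, hence permutes the components of the complement; being an increasing homeomorphism of all of $\mathbf{R}$ (so $x\cdot k\to-\infty$ as $x\to-\infty$) it must send the unique left-unbounded component to itself, whence $a\cdot k=a$. The symmetric statement holds for a right-unbounded component $(b,\infty)$. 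With this in hand the argument is short. Every element of type A or C has a leftmost fixed point $a=\inf\mathrm{Fix}(f)$ whose adjacent component $(-\infty,a)$ is entirely moved, and every element of type B or C has such a rightmost fixed point; I would run the argument with whichever extremal point is available, say the leftmost one for types A and C, types B and C being symmetric via the rightmost point. Writing $G=g^n$ and using that $g$ is fully supported and orientation preserving, $x\cdot g-x$ has constant sign, so $G$ is fully supported for every $n\neq 0$, and in particular $a\cdot G\neq a$. By part (1), $h=G^{-1}fG$ is of the same type, with leftmost fixed point $a'=\inf\mathrm{Fix}(h)=a\cdot G$. Now assume for contradiction that $f$ and $h$ commute: applying the centralizer observation to the pair $(k,F)=(h,f)$ gives $a\cdot h=a$, and to $(k,F)=(f,h)$ gives $a'\cdot f=a'$. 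Since $a\neq a'$, either $a<a'$ or $a'<a$. If $a<a'$ then $a\in(-\infty,a')$, a region on which $h$ moves every point, contradicting $a\cdot h=a$; if $a'<a$ then $a'\in(-\infty,a)$, on which $f$ moves every point, contradicting $a'\cdot f=a'$. Either way a contradiction, so $f$ and $h$ do not commute.

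The step I expect to be the main obstacle is the centralizer observation, and specifically the care needed because the behavior of $f$ on the bounded middle region between its extremal fixed points is completely unconstrained. One therefore cannot compare the full fixed-point sets of $f$ and $h$; the point is that the single extremal fixed point adjacent to an unbounded component is canonically determined and must be fixed by any commuting homeomorphism, precisely because an orientation-preserving homeomorphism of $\mathbf{R}$ cannot move the unique unbounded-to-one-side component of the moved set off itself. Getting the right-action conjugation conventions and the $\inf/\sup$ transport exactly right ($\mathrm{Fix}(h)=\mathrm{Fix}(f)\cdot g^n$ and $a'=a\cdot g^n$) is the only other place that requires attention.
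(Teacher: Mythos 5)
Your proposal is correct and follows essentially the same route as the paper: for part (2) the paper also works with the extremal fixed point $r=\inf\mathrm{Fix}(f)$ adjacent to the fixed-point-free half-line, notes that $\mathrm{Fix}(g^{-n}fg^{n})=\mathrm{Fix}(f)\cdot g^{n}$ displaces it, and derives a contradiction from the fact that commuting homeomorphisms preserve each other's fixed-point sets. The only differences are cosmetic: the paper normalizes directions (taking $f$ decreasing on $(-\infty,r)$ and $g$ increasing, reducing negative $n$ to positive $n$ at the end) and computes directly that $r\cdot f_1$ is a point below $r$ that would have to be fixed by $f$, whereas your centralizer observation about the unique left-unbounded component and the symmetric two-case comparison of $a$ and $a'$ achieves the same thing without the WLOG reductions.
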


\begin{proof}
The proof of $(1)$ is elementary and left to the reader.
We shall prove $(2)$.

We prove this for an element $f$ of type A. The proofs for the other types shall be similar.
Let $(-\infty,r)$ be an interval such that $f$ does not fix a point in $(-\infty,r)$ and fixes $r$.
By replacing $f$ with its inverse if needed, we can also assume that $f$ is decreasing on $(-\infty,r)$.

Assume without loss of generality that the fully supported element is increasing on $\mathbf{R}$.
Given $n\in \mathbf{N}\setminus \{0\}$, we will show that $f$ and $f_1=g^{-n} f g^{n}$ do not commute.
It will follow immediately that for any $n\in \mathbf{Z}\setminus \{0\}$, $f$ and $g^{-n} f g^{n}$ do not commute.

Our claim follows from observing that $$(r\cdot f)\cdot f_1=x\cdot f_1\neq (x\cdot f_1)\cdot f$$
\end{proof}

\begin{lem}\label{germinfinity}
Given any element $f\in G_1$ which is either of type A,B, or C,
the element $\phi(f)\in G$ is also of one of the types A,B,C or is fully supported.
\end{lem}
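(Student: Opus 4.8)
The plan is to reduce the statement to a single assertion---that $\phi(f)$ is not compactly supported---and then to obtain this by transporting a non-commutation relation from $G_1$ to $G$ along the isomorphism $\phi$. First I would record a dichotomy special to combinatorially finite actions. Since $G$ is combinatorially finite, every element of $G$ has finitely many components of support and the germs at $\pm\infty$ are abelian. Hence a non-identity $h\in G$ has nontrivial germ at $+\infty$ if and only if its rightmost component of support is a ray $(s,\infty)$, i.e. $h$ moves every point of $(s,\infty)$; symmetrically at $-\infty$. Running through the four possibilities for the germs of $h$ at $\pm\infty$, one sees that a non-identity $h$ is compactly supported precisely when both germs are trivial, and that otherwise $h$ is of type A (germ trivial at $+\infty$, nontrivial at $-\infty$), of type B (the mirror case), of type C, or fully supported (both germs nontrivial, the case of no fixed point giving fully supported and the case of a fixed point giving type C). Since $f\neq 1$ forces $\phi(f)\neq 1$, it therefore suffices to prove that $\phi(f)$ is not compactly supported.

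The key input is a non-commutation relation. By Lemma \ref{fullysupported} there is a fully supported $g\in G_1$, and any conjugate $g'=\theta g\theta^{-1}$ with $\theta\in G_1$ is again fully supported, since full support is preserved under conjugation by a homeomorphism. As $f$ is of type A, B, or C, Lemma \ref{lemtype}(2) gives that $f$ and $(g')^{-n}f(g')^{n}$ do not commute for every $n\in\mathbf{Z}\setminus\{0\}$, and applying the isomorphism $\phi$ yields
$$[\phi(f),\ \phi(g')^{-n}\phi(f)\phi(g')^{n}]\neq 1\qquad(n\in\mathbf{Z}\setminus\{0\}).$$
On the other hand, Lemma \ref{fullysupportedimage} tells us that $\phi(g)$ is of type C or fully supported; in either case $\phi(g)$ has no fixed point in some ray $(s_0,\infty)$.

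Now suppose, for contradiction, that $\phi(f)$ is compactly supported, with $\operatorname{supp}(\phi(f))\subseteq [a,b]$. Because $\mathbf{R}$ contains no proper $G$-invariant subinterval, every $G$-orbit is unbounded, so I can choose $\theta\in G_1$ with $\psi:=\phi(\theta)$ satisfying $[a,b]\cdot\psi\subseteq (s_0,\infty)$. Set $g'=\theta g\theta^{-1}$, so that $\phi(g')^{n}=\psi\,\phi(g)^{n}\,\psi^{-1}$. Since $\phi(g)$ has no fixed point on the ray $(s_0,\infty)$, its iterates move the compact set $[a,b]\cdot\psi$ off itself, so for a suitable $n\neq 0$ we have $([a,b]\cdot\psi)\cdot\phi(g)^{n}\cap([a,b]\cdot\psi)=\varnothing$; applying the bijection $(\cdot)\,\psi^{-1}$ gives $[a,b]\cdot\phi(g')^{n}\cap[a,b]=\varnothing$. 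Consequently $\operatorname{supp}(\phi(g')^{-n}\phi(f)\phi(g')^{n})=\operatorname{supp}(\phi(f))\cdot\phi(g')^{n}$ is disjoint from $\operatorname{supp}(\phi(f))$, so these two elements commute, contradicting the displayed relation. Therefore $\phi(f)$ is not compactly supported, and by the dichotomy above it is of type A, B, C, or fully supported.

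I expect the main obstacle to be the case in which $\phi(g)$ is only of type C rather than fully supported: a power of $\phi(g)$ need not displace $[a,b]$ from itself if $[a,b]$ lies near the fixed points of $\phi(g)$. The device that resolves this uniformly is to first slide $[a,b]$ into the escaping ray $(s_0,\infty)$ of $\phi(g)$---which is possible because $\mathbf{R}$ has no proper invariant subinterval and hence all orbits are unbounded---and then to exploit that the non-commutation relation from Lemma \ref{lemtype}(2) is available for \emph{every} fully supported element of $G_1$, in particular for the conjugate $g'$. A secondary point worth verifying carefully is the germ dichotomy of the first paragraph, where finiteness of the number of support components is exactly what converts ``nontrivial germ at $\pm\infty$'' into ``moves an entire ray at $\pm\infty$''.
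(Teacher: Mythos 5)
Your proof is correct and follows essentially the same route as the paper: assume $\phi(f)$ is compactly supported, use Lemmas \ref{fullysupported}, \ref{fullysupportedimage} and \ref{lemtype}(2) together with the absence of a proper $G$-invariant subinterval to manufacture a disjoint-support commutation relation in $G$ contradicting the non-commutation in $G_1$, then conclude via the finitely-many-components dichotomy. The only (cosmetic) difference is that the paper conjugates $\phi(f)$ into the moving ray of $\phi(g)$ and pulls back through $\phi^{-1}$ using Lemma \ref{lemtype}(1), whereas you conjugate the fully supported element $g$ on the $G_1$ side so that the moving ray of its image covers $\operatorname{supp}(\phi(f))$ --- a mirror image of the same maneuver.
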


\begin{proof}
Without loss of generality assume that $f$ is of type A.
The proof for type B,C will be similar.
Assume by way of contradiction that $\phi(f)$ has its support contained in a compact interval.

Using Lemma \ref{fullysupported} we find an element $g\in G_1$ which does not fix a point in $\mathbf{R}$.
By Lemma \ref{fullysupportedimage}, $\phi(g)$ has a non trivial germ at $\infty$.
This together with the fact that $G$ is combinatorially finite means that there is an interval $(r,\infty)$ on which either $\phi(g)$ or $\phi(g^{-1})$ is increasing.
(We assume the former for the sake of notational convenience).

We first find an $h\in G$ such that $$k=h^{-1} \phi(f) h$$ has its support contained in $(r,\infty)$.
(This uses the fact that there is no $G$-invariant proper closed subinterval of $\mathbf{R}$.)
Since $\phi(f)$ fixes each point outside a compact interval, there is an $n\in \mathbf{N}$ such that $$[\phi(g)^{-n} k \phi(g)^n,k]=1$$ since they have disjoint support.

Next, note that $$\phi^{-1}(k)=\phi^{-1}(h^{-1}) f \phi^{-1}(h)$$ is of also of type A thanks to Lemma \ref{lemtype}.
It also follows from Lemma \ref{lemtype} that for each $n\in \mathbf{Z}\setminus \{0\}$, the elements $$g^{-n} \phi^{-1}(k) g^n\text{ and }\phi^{-1}(k)$$ do not commute.
This is a contradiction since $\phi$ is an isomorphism.

This means that our original assumption must be false and that $\phi(f)$ has a non trivial germ at $\infty$ or $-\infty$ or both.
Since $\phi(f)$ has finitely many components of support, we conclude that it must be of type A,B,C or fully supported.
\end{proof}

We remark that in the above Lemma, the type of $f$ and $\phi(f)$ may not be the same.

\begin{proof}[Proof of Proposition \ref{elements}]
We will show that there are elements $f_1,f_2\in G$ satisfying:
\begin{enumerate}
\item $f_1$ has a trivial germ at $-\infty$ and does not fix any point in an interval of the form $(r,\infty)$.
\item $f_2$ has a trivial germ at $\infty$ and does not fix any point in an interval of the form $(-\infty,r)$.
\end{enumerate} 
In fact, we only show the existence of $f_1$. 
The proof of the existence of $f_2$ shall be symmetric.

We know that the action of $G_1$ is coherent, and the groups of germs at $\pm \infty$ contain an abelian subgroup of rank $n$. 
We find elements $g_1,..,g_n$ and $h_1,...,h_n$ in $G_1$ such that:
\begin{enumerate}
\item The germs of $g_1,...,g_n$ at $-\infty$ generate an abelian group of rank $n$.
\item The germs of $h_1,...,h_n$ at $\infty$ generate an abelian group of rank $n$.
\end{enumerate}
Moreover, let $g,h\in G_1$ be elements such that:
\begin{enumerate}
\item $g$ has a trivial germ at $-\infty$ and does not fix a point in an interval of the form $(r,\infty)$.
\item $h$ has a trivial germ at $\infty$ and does not fix a point in an interval of the form $(-\infty, s)$.
\end{enumerate}

The elements $g,g_1,...,g_n$ satisfy that for each $(a_0,a_1,...,a_n)\in \mathbf{Z}^{n+1}\setminus \{(0,...,0)\}$,
the element $$h_{a_0,...,a_n}=g^{a_0}g_1^{a_1}...g_n^{a_n}$$ either has a non trivial germ at $-\infty$ or $\infty$.
In particular, it is of one of the types A,B,C or fully supported.

Using the assumption that the abelian rank of the group of germs at $-\infty$ 
for $G$ is at most $n$, we have the following.
There is an element $$(a_0,a_1,...,a_n)\in \mathbf{Z}^{n+1}\setminus \{(0,...,0)\}$$ such that
the element
$$\phi(h_{a_0,...,a_n})=\phi(g)^{a_0}\phi(g_1)^{a_1}...\phi(g_n)^{a_n}$$
has a trivial germ at $-\infty$.

Using Lemma \ref{germinfinity}, we know that $\phi(h_{a_0,...,a_n})$ must have a non trivial germ at $\infty$.
Since the group $G$ is combinatorially finite, we conclude that $\phi(h_{a_0,...,a_n})$ does not fix a point in an interval of the form $(r,\infty)$.
Therefore, the required element is $f_1=\phi(h_{a_0,...,a_n})$.
The element $f_2$ can be found in a symmetric fashion replacing the roles of $g,g_1,...,g_n$ by $h,h_1,...,h_n$.
\end{proof}

\section{Proof of the main theorems}

In this section we shall prove Theorems \ref{main1}, \ref{main2} and \ref{main3}.

\begin{proof}[Proof of Theorem \ref{main1}]
By way of contradiction, assume that such a $G\in \mathcal{C}$ is a subgroup of the standard copy of $F$ in $\textup{PL}^+([0,1])$.
We denote the underlying group as $G$, and also denote the action as $G< F< \textup{PL}^+([0,1])$.
By our hypothesis, there is a coherent action $H<\textup{Homeo}^+([0,1])$ such that $G\cong H$ and $H$ has a group of germs at a point which is either non abelian or abelian rank greater than $1$.

Since from Theorem \ref{structure} $G$ satisfies that every proper quotient is solvable of some bounded length, $G$ does not embed in a direct product of its proper quotients.
Therefore, there is a closed interval $I\subset [0,1]$ which does not contain a proper $G$-invariant closed subinterval and such that the restriction of $G$ to $I$ is faithful.
Note that $G|_I$ is combinatorially finite and also weakly coherent, since the group of germs at $inf(I),sup(I)$ are isomorphic to $\mathbf{Z}$.
Using Corollary \ref{maincor} we furnish a combinatorially finite, coherent group action $$K<\textup{Homeo}^+(I)$$ such that $G\cong K$.
Note that the semiconjugation also provides that for $K$ the groups of germs at points have abelian rank one.
Using Theorem \ref{rigidity}, we conclude that this is topologically conjugate to $H$.
This is a contradiction, since the fact that groups of germs at points are $\mathbf{Z}$ is preserved under topological conjugacy.
\end{proof}

\begin{proof}[Proof of Theorem \ref{main2}]
In this proof $\mu$ refers to the Lebesgue measure on $\mathbf{R}$.
It suffices to consider the case of a finitely generated group, since if a countable subgroup of $\textup{Homeo}^+(\mathbf{R})$
produces a non $\mu$-amenable equivalence relation, there is a finitely generated subgroup which also produces a non $\mu$-amenable equivalence relation.
This is true since the equivalence relation is an increasing union of the equivalence relations produced by an increasing sequence of finitely generated subgroups.
And since if each equivalence relation in the sequence is $\mu$-amenable, the union is $\mu$-amenable.
(See \cite{KechrisMiller} or \cite{Moore} for the closure properties of amenable equivalence relations.)
If this finitely generated subgroup is shown not to embed in $F$, then the overgroup does not embed in $F$.

Let $G_1<\textup{Homeo}^+(\mathbf{R})$ be a coherent action of a finitely generated group satisfying that the associated orbit equivalence relation is non $\mu$-amenable.
Assume by way of contradiction that there is a subgroup $G$ of the standard copy of $F$ in $\textup{PL}^+([0,1])$ such that $G_1\cong G$.
Since $G$ satisfies that every proper quotient is solvable of some bounded length, it does not embed in a direct product of its quotients.
So we can find an interval $I\subset \mathbf{R}$, such that the action of $G$ restriction to $I$ is faithful and $I$ has no proper invariant closed subinterval.
Note that $G|_I$ is combinatorially finite and also weakly coherent, since the group of germs at $inf(I),sup(I)$ are isomorphic to $\mathbf{Z}$.

There are two cases:
\begin{enumerate}
\item The action of $G$ on $int(I)$ in minimal.
\item The action of $G$ on $int(I)$ admits an exceptional invariant set $\Gamma\subset int(I)$ such that $G|_{\Gamma}$ is minimal and faithful.
\end{enumerate}

In case $(1)$ it follows from Proposition \ref{combfinite} that this action is coherent. 
From Theorem \ref{rigidity} we know that this action is topologically conjugate to the coherent action $G$ that produces a nonamenable equivalence relation with respect to the Lebesgue measure.
This is a contradiction, since the equivalence relation of $G$ on $I$ is amenable with respect to the Lebesgue measure.

Now we treat case $(2)$. 
Since $G$ is a combinatorially finite and weakly coherent group action, using Corollary \ref{maincor}, we furnish a combinatorially finite, coherent group action $$H<\textup{Homeo}^+(I)$$ such that $G\cong H$.
Let $\Phi:I\to [0,1]$ be the semiconjugacy map such that $\Phi(\Gamma)=(0,1)$ where $\Gamma\subset I$ is the exceptional minimal invariant set for $G$.

Since the restriction of the equivalence relation to $I$ is also hyperfinite, it is $\mu$-amenable with respect to the natural pullback $\nu$ of the Lebesgue measure under $\Phi$
which assigns measure $1$ to $\Gamma$ in $I$.
Note that the restriction of $\Phi$ to $\Gamma$ is a measure preserving borel bijection outside a $\nu$-null set.
Moreover, outside this $\nu$-null set, $\Phi$ maps orbits to orbits.
It follows that if $E$ is the associated orbit equivalence relation of the action of $H$ on $I$, $E$ is amenable with respect to Lebesgue measure.

Since $G_1,H$ are topologically conjugate by Theorem \ref{rigidity}, we obtain a contradiction since one equivalence relation is nonamenable and the other is amenable with respect to the Lebesgue measure.
This means that our assumption that $G_1$ embeds in Thompson's group $F$ must be false.
\end{proof}

\begin{proof}[Proof of Theorem \ref{main3}]
By way of contradiction, let $G$ be a subgroup of the standard copy of $F$ in $\textup{PL}^+([0,1])$ such that $G$ is isomorphic to the given group $G_1$.
Since $G$ satisfies that every proper quotient is solvable of some bounded length, it does not embed in a direct product of its quotients.
Therefore, there is an closed interval $I\subset \mathbf{R}$ which does not contain a proper $G$-invariant closed subinterval and such that the restriction of $G$ to $I$ is faithful.
Again, $G$ is weakly coherent because the groups of germs at $inf(I),sup(I)$ are abelian.
Since $G$ is a combinatorially finite and weakly coherent group, using Corollary \ref{maincor}, we furnish a combinatorially finite, coherent group action $$H<\textup{Homeo}^+(I)$$ such that $G\cong H$.
Using Theorem \ref{rigidity}, we conclude that this is topologically conjugate to $G$.
This is a contradiction, since such a topological conjugacy preserves the property that each element has finitely many components of support.
\end{proof}

\begin{proof}[Proof of Corollary \ref{maincor1}]
This is an immediate consequence of Theorem \ref{main1}. 
\end{proof}

\begin{proof}[Proof of Corollary \ref{maincor2}]
The proof goes along the lines of the proof of Theorem \ref{main1}. 
Note that the full strength of the notion of weak coherence is needed here.
\end{proof}

\begin{proof}[Proof of Corollary \ref{maincor3}]
The proof goes along the same lines as the proof of Theorem \ref{main1}.
The only difference is that at the very end of the proof, the contradiction obtained arises from the existence of the element with infinitely many components of support.
\end{proof}

\end{document}